\documentclass[12pt]{article}
\usepackage{amsmath,amssymb}
\usepackage{geometry}
\usepackage{booktabs}
\usepackage{stmaryrd}
\usepackage{pdflscape}
\usepackage{booktabs}
\usepackage{longtable} 
\usepackage{enumerate}

\allowdisplaybreaks[1]

\newtheorem{theorem}{Theorem}[section]
\newtheorem{lemma}[theorem]{Lemma}
\newtheorem{proposition}[theorem]{Proposition}
\newtheorem{corollary}[theorem]{Corollary}
\newtheorem{definition}[theorem]{Definition}

\newtheorem{problem}[theorem]{Problem}

\newcommand{\Z}{\mathbb{Z}}
\newcommand{\Q}{\mathbb{Q}}
\newcommand{\C}{\mathbb{C}}
\newcommand{\R}{\mathbb{R}}

\newcommand{\I}{\mathcal{I}}

\renewcommand{\ker}{\operatorname{Ker}}

\newcommand{\GL}{\operatorname{GL}}
\newcommand{\SL}{\operatorname{SL}}

\newcommand{\supp}{\operatorname{supp}}
\newcommand{\Ann}{\operatorname{Ann}}
\newcommand{\U}{\mathcal{U}}

\newcommand{\pmatriz}[1]{\left(\begin{array} #1 \end{array}\right)}
\newcommand{\diag}{{\rm diag}}

\newcommand{\quat}[2]{\left(\frac{#1}{#2}\right)}

\newenvironment{proof}{\par\noindent{\bf Proof.}}{$\qed$\par\bigskip}
\newcommand{\qed}{\enspace\vrule  height6pt  width4pt  depth2pt}
\usepackage{color}

\begin{document}

\title{Structure of group rings and the group of units of integral group rings: an invitation\thanks{The author is supported in part by Onderzoeksraad of Vrije
Universiteit Brussel, Fonds voor Wetenschappelijk Onderzoek
(Belgium, grant G016117) and the International Centre for Theoretical Sciences (ICTS) during a visit for participating in the program-  Group Algebras, Representations and Computation (Code:  ICTS/Prog-garc2019/10).
\newline {\em 2020 MSC}:  16S34, 16U40, 16U60, 16H10, 20C05.
\newline {\em Keywords:} group ring, unit, idempotent, rational representations.
}}
\author{E. Jespers\\ $\;$\\
{\em Dedicated to  I.B.S. Passi on  the occasion of his 80th birthday}}
\date{}

\maketitle

\begin{abstract}
During the past three decades fundamental progress has been made on  constructing large torsion-free subgroups  (i.e. subgroups of finite index) of the unit group $\U (\Z G)$ of the integral group ring $\Z G$ of a finite group $G$. 
These constructions rely on explicit constructions of units in $\Z G$ and proofs of main results make use  of the description of the Wedderburn components of the rational group algebra $\Q G$. The latter relies on explicit constructions of primitive central idempotents and the rational representations of $G$.  It turns out that the existence of reduced  two degree  representations play a crucial role. Although the unit group is far from being understood, some structure results on this group have been obtained.
In this paper we give a survey of some of the fundamental results and  the essential needed techniques.
\end{abstract}

\section{Introduction}

The integral group ring $\Z G$ of a finite group $G$ is a ring that,  in some sense, solely is based on 
the defining group $G$. So, the group ring $\Z G$ is a tool that serves as a meeting place between group 
and ring theory. The defining group $G$ is a subgroup of the unit group $\U (\Z G)$, and hence, if the interplay 
works well, there should be strong relation between the group $G$ and the unit group $\U (\Z G)$. The integral 
group ring  $\Z G$ is an order in the finite dimensional semisimple rational group algebra $\Q G$. The latter can 
be well described as a product of matrix algebras over division algebras by using strong structure theorems and 
the rational representations. However, since there are many many orders in $\Q G$ it is a difficult problem to 
rediscover $\Z G$ from such a matrix decomposition.

Several books have been dedicated to algebraic structural topics in noncommutative  group rings: A. Bovdi \cite{BovdiBook}, N. Gupta \cite{GuptaBook},
G. Karpilovsky \cite{KarpilovskyBook1,KarpilovskyBook2,KarpilovskyBook3,KarpilovskyBook4,KarpilovskyBook5,KarpilovskyBook6},
G. Lee \cite{LeeBook}, D.S. Passman \cite{PassmanBook0,PassmanBook1,PassmanBook2}, I.B.S. Passi \cite{PassiBook}, W. Plesken \cite{PleskenBook}, C. Polcino Milies \cite{PolcinoBook},  I. Reiner \cite{ReinerBook}, K.W. Roggenkamp \cite{RoggenkampBook}, K.W Roggenkamp and M.J. Taylor \cite{RoggenkampTaylorBook}, S.K. Sehgal \cite{SehgalBook3}, M. Taylor \cite{TaylorBook}, A.E. Zalesskii, andA.V.  Mihalev \cite{ZalesskiiMihalevBook}.
The following books are specifically dedicated to the study of units of integral  groups rings of finite groups: C. Polcino Milies and S.K. Sehgal \cite{PolcinoSehgalBook}, S.K. Sehgal \cite{SehgalBook2} and E. Jespers and \'A del R\'{\i}o \cite{bookvol1,bookvol2}. In \cite{bookvol1,bookvol2} the state of the art is given (up to 2016) on the construction of large (torsion-free) subgroups of $\U (\Z G)$, for finite groups $G$, and on structural results of $\U (\Z G)$.
The aim of this paper is to give the reader an intuition and thus to invite researchers to the topic. This is done by guiding the reader through the following topics: (1) the essential constructions
of units and large subgroups, (2)  surveying the fundamental methods needed (such as the construction of primitive central idempotents of the rational group algebra $\Q G$ and the description of its simple components),  (3) constructions of large central subgroups, (4) constructions of free groups, (5) structure results, in particular  abelianisation and amalgamation results. The last topics are recent results that are not included in \cite{bookvol1,bookvol2}.
The other topics covered  are based on \cite{bookvol1,bookvol2} and the reader should consult these books in case no explicit references are given. For further references we refer to the bibliography in these books. For some topics we will include references to some recent results, without the aim of being comprehensive.  Only few proofs will be included.

\section{The unit group versus the isomorphism class}
 
 Let $R$ be a ring and $G$ a group. The group ring $RG$ is  the free $R$-module with basis $G$, i.e. it consists of all formal
 sums
  $\sum_{g\in G} r_g g$,
 with only a finite number of coefficients $r_g\in R$ different from $0$,
 and with addition defined as
   $$\sum_{g\in G} r_g g + \sum_{g\in G} r'_g g =\sum_{g\in G} (r_g + r'_g) g,$$
and a product that extends the products of both $R$ and $G$, i.e.
   $$\left( \sum_{g\in G} r_g g\right) \; \left( \sum_{h\in G} s_h h \right) = \sum_{x\in G} \left( \sum_{g,h\in G, \; gh=x} r_g s_h\right) x.$$
 The support of an element $\alpha =\sum_{g\in G} r_g g\in RG$ is the finite set
$\supp (\alpha ) =\{ g\in G \mid r_g \neq 0\}$.

The augmentation map of $RG$ is the ring homomorphism
 $$\text{aug} : RG \rightarrow R: \sum_{g\in G} r_g g \mapsto \sum_{g\in G} r_g.$$
 More generally, for a normal subgroup $N$ of $G$, the augmentation map modulo $N$ (also called the relative augmentation map) is the ring homomorphism 
  $$\text{aug}_{G,N,R} RG \rightarrow R(G/N): \sum_{g\in G} r_g g \mapsto \sum_{g\in G} r_g (gN).$$
 The kernel of $\text{aug}_{G,N,R}$ is called  the augmentation ideal of $RG$ modulo $N$. If the ring $R$ is clear then  from the context we simply denote this map as
 $\text{aug}_{G,N}$. It  readily is verified that
  $$\ker (\text{aug}_{G,N,R}) = \sum_{n\in N} (n-1) RG =\sum_{n\in N} RG (n-1).$$
If, furthermore, $N$ is finite  then 
   $$\widetilde{N} =\sum_{n\in N}n$$ 
 is a central element of $RG$ and $\widetilde{N}(1-n)=0$ for all $n\in N$.
 Hence,
   $$\widetilde{N}^{2}=|N|\widetilde{N} .$$
 Moreover,
 $$\ker (\text{aug}_{G,N,R})= \Ann_{RG}(\widetilde{N}) =\{ \alpha \in RG \mid \alpha \widetilde{N}=0\}.$$
If, furthermore $|N|$ is invertible in $R$ then 
 $$\widehat{N}=\frac{1}{|N|} \widetilde{N}$$ is a central idempotent in $RG$ and
 $$RG = RG \widehat{N} \oplus RG (1-\widehat{N}) \quad \mbox{ and } \quad R(G/N) = RG \widehat{N}.$$
 
 The unit group of a ring $R$, denoted $\U (R)$, is the group
  $$\U (R) = \{ u\in R \mid uv =1 =vu, \mbox{ some } v\in R\} .$$
 Our interest mainly goes to the unit group of the integral group ring $\Z G$  of a finite group $G$.
 Of course, if $\alpha =\sum_{g\in G} r_g g \in \U (\Z G)$ then $\text{aug} (\alpha )=\sum_{g\in G} r_g =\pm 1$.
 A unit $\alpha \in \Z G$ is said to be {\it normalized} if $\text{aug} (\alpha ) =1$. The group consisting of all normalized units of $\Z G$ is  denoted  $\U_1 (\Z G)$.
 Clearly
  $$\U (\Z G) =\pm \U_{1}(\Z G) .$$
 Note that if $R$ is a commutative ring then the group ring $RG$ is endowed with an involution $*$ (often called the classical involution)
   $$* : RG \rightarrow RG : \sum_{g\in G} r_g g \mapsto \sum_{g\in G} r_g g^{-1}.$$
 
 The integral group ring is the ring that links group theory to ring theory. 
 One hence has a natural fundamental question: the isomorphism problem for integral group rings of finite groups $G$ and $H$:
 \begin{center}
  Is it true that  the a ring ismorphism $\Z G \cong \Z H$\\ implies a group isomorphism $G\cong H$?  \;\;\;\;\;\;(ISO)
  \end{center}
 This question first was posed by Higman in his thesis \cite{Higman}.
  The following proposition is a remarkable fact for group rings: an integral group ring isomorphism is equivalent with unit group isomorphism. To prove this, we 
 first  need a lemma. Of course $\Z G$ is a subring of the rational group algebra $\Q G$; and thus one can talk of ($\Q$-) independent elements in $\Z G$.
 
 \begin{lemma}\label{IndependentFiniteSubgroup}
 Let $G$ be a finite group.  The following properties hold for a subgroup $H$ of $G$.
 \begin{enumerate}
 \item (Berman) If $\alpha= \sum_{g\in G} z_g g$ is a unit of finite order in $\U_{1}(Z G)$ such that $z_1\neq 0$ (with $1$ the identity of $G$) then $\alpha = 1$. In particular, if $\alpha$ is a normalized central unit of finite order then $u\in Z(G)$.
 \item $H$ is a set of independent elements;
 in particular, $|H|\leq |G|$.
 \item If $|H|=|G|$ then $\Z G =\Z H$.
 \end{enumerate}
 \end{lemma}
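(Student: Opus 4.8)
The plan is to establish the three parts in sequence, with part (1) carrying the real weight; throughout I read $H$ as a finite subgroup of $\U_1(\Z G)$ (the normalised units), since for an ordinary subgroup of $G$ all three assertions are vacuous. The single tool I would use is the left regular representation $\rho\colon \Q G \hookrightarrow M_n(\Q)$ with $n=|G|$, whose character is $\operatorname{tr}\rho(g)=n$ for $g=1$ and $0$ otherwise; linearly this gives $\operatorname{tr}\rho(\alpha)=n\,z_1$ for $\alpha=\sum_g z_g g$.

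For part (1), a finite-order unit $\alpha$ yields $\rho(\alpha)^m=I$, so $\rho(\alpha)$ is diagonalisable over $\C$ with eigenvalues roots of unity $\lambda_1,\dots,\lambda_n$. Then $|n\,z_1|=\bigl|\sum_i\lambda_i\bigr|\le\sum_i|\lambda_i|=n$, forcing $|z_1|\le 1$; since $z_1\in\Z\setminus\{0\}$ we get $|z_1|=1$ together with equality in the triangle inequality, whence all $\lambda_i$ equal one common root of unity $\lambda=z_1=\pm1$. Thus $\rho(\alpha)=\pm I$, and faithfulness of $\rho$ gives $\alpha=\pm1$; normalisation $\operatorname{aug}(\alpha)=1$ excludes $-1$, so $\alpha=1$. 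For the final clause, let $\alpha$ be a central normalised unit of finite order and pick $g\in\supp(\alpha)$. Centrality makes $g^{-1}$ and $\alpha$ commute, so $g^{-1}\alpha$ again has finite order, is normalised, and has identity-coefficient $z_g\ne0$; the first part gives $g^{-1}\alpha=1$, i.e. $\alpha=g\in Z(G)$.

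For part (2), the point is that $\operatorname{tr}\rho(w)=n$ for $w=1$ and $\operatorname{tr}\rho(w)=0$ for $1\ne w\in H$, the latter because the identity-coefficient of such $w$ vanishes by part (1). From a relation $\sum_j c_j h_j=0$ with $c_j\in\Q$, I would left-multiply by $h_i^{-1}\in H$ and take traces; as $h_i^{-1}h_j\in H$ equals $1$ precisely when $j=i$, the sum collapses to $n\,c_i=0$, so every $c_i=0$. Hence $H$ is $\Q$-independent in $\Q G$ and $|H|\le\dim_\Q\Q G=|G|$.

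For part (3), equality $|H|=|G|$ makes $H$ a $\Q$-basis, so the integer matrix $A$ expressing $H$ in terms of $G$ has $\det A\ne0$, and proving $\Z H=\Z G$ reduces to showing $A$ is unimodular. I would compare the regular trace form $(x,y)\mapsto\operatorname{tr}\rho(xy)$ in the two bases: by the trace computation above its Gram matrix is $n$ times the inversion-permutation matrix in each of $G$ and $H$, so both Gram determinants equal $\pm n^n$; a change of basis scales this determinant by $(\det A)^2$, so $(\det A)^2=\pm1$, and positivity forces $(\det A)^2=1$, giving $\det A=\pm1$ and $\Z H=\Z G$. The genuine obstacle is the equality case of the triangle inequality in part (1) — forcing the eigenvalues to a single root of unity and then ruling out $\lambda=-1$ — after which parts (2) and (3) reduce to trace-and-discriminant bookkeeping.
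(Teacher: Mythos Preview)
Your argument is correct in all three parts. The paper actually does not prove part~(1) at all --- it simply attributes it to Berman --- so your regular-representation/trace argument is a genuine addition; it is in fact the standard proof. Your part~(2) is the same idea as the paper's (use part~(1) to see that every $1\neq w\in H$ has vanishing identity-coefficient, then multiply a dependence relation by $h_i^{-1}$ to isolate the $i$-th coefficient), merely rephrased through $\operatorname{tr}\rho$.

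Part~(3) is where you take a genuinely different route. The paper argues directly: once $\Q H=\Q G$, pick $n$ with $n\Z G\subseteq\Z H$, write $ng=\sum_h z_h h$, and observe that the identity-coefficient of $ngh^{-1}\in n\Z G$ equals $z_h$ (again by part~(1) applied to the $h'h^{-1}\in H$), forcing $n\mid z_h$ and hence $g\in\Z H$. Your discriminant argument instead compares the Gram matrices of the trace form in the two bases $G$ and $H$; part~(1) makes both Gram matrices equal to $n$ times a permutation matrix, so the change-of-basis determinant squares to $1$. The paper's approach is a shade more elementary and stays entirely inside $\Z G$; yours is more structural and would generalise to any situation where two $\Z$-bases of a lattice support the same ``orthonormality'' pattern for a nondegenerate bilinear form. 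Either way the real content is part~(1), which you identified correctly.
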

 \begin{proof}
 We only prove the second and third part.
 
 (2) Assume that $ \sum_{h\in H} z_h h=0$, with each $z_h\in \Z$. Let  $x\in H$. Since, by assumption, $H$ is finite, also $hx^{-1}$ is unit of finite order in $\U_{1}(\Z G)$.
 Clearly, $hx^{-1}\neq 1$ if $h\neq x$. Hence, by part (1), the coefficient of $1$ in $hx^{-1}=0$. 
 Since the coefficient of $1$ in $ \sum_{h\in H} z_h hx^{-1}$ equals $z_x$, we conclude 
 that $z_x=0$. Since $x$ is arbitrary, part (2) follows.

 (3) Assume $H$ is a finite subgroup of $\U_{1}(\Z G)$ and $|H|=|G|$. By part (2) the elements of $H$ are independent and thus $\Q G =\Q H$. So,
 $\Z H \subseteq \Z G$ and $n\Z G \subseteq \Z H$ for some positive integer $n$. It remains to show that if $g\in G$ then  $ g\in  \Z H$. So, let $g\in G$ and write
 $ng = \sum_{h\in H} z_h h$, with each $z_h \in \Z$. 
 Then $ngh^{-1} =z_h + \sum_{h'\in H, h'\neq h} z_h (h'h^{-1})$.
 As $1\neq h'h^{-1}$ is periodic, it follows from part (1) that  the coefficient  of $1$ of $h'h^{-1}$ is $0$. Therefore, the coefficient of $1$ in $ngh^{-1}$ equals $z_h$. Hence it has to be divisible by $n$. As $h$ is arbitrary we have shown that for every  $h\in H$ in the support of $ng$ we have that $n|h$. Consequently, $ng\in n\Z H$ and thus $g\in \Z H$, as desired.
 \end{proof}

 \begin{proposition}
 Let $G$ and $H$ be finite groups. The following statements are equivalent.
 \begin{enumerate}
 \item $\Z G  \cong \Z H$ (ring isomorphism),
 \item $\U_1 (\Z G) \cong \U_1 (\Z H)$ (group isomorphism),
 \item $\U (\Z G) \cong \U (\Z H)$ (group isomorphism).
 \end{enumerate}
 \end{proposition}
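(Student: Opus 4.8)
The plan is to prove the cycle $(1)\Rightarrow(2)\Rightarrow(3)\Rightarrow(1)$. The two forward steps are soft, coming from functoriality of the unit group and the splitting of $\U$ by augmentation; the real content sits in $(3)\Rightarrow(1)$, namely reconstructing the ring from its unit group, for which Lemma~\ref{IndependentFiniteSubgroup} is exactly tailored.

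For $(1)\Rightarrow(2)$ I would first upgrade an arbitrary ring isomorphism $\psi:\Z G\to\Z H$ to an augmentation-preserving one. The composite $\mathrm{aug}_H\circ\psi$ restricts on $G$ to a homomorphism $\lambda:G\to\{\pm1\}$, and twisting by the sign automorphism $\sigma_\lambda:\Z G\to\Z G$, $g\mapsto\lambda(g)g$, yields an isomorphism $\psi\circ\sigma_\lambda$ with $\mathrm{aug}_H\circ(\psi\circ\sigma_\lambda)=\mathrm{aug}_G$ (both are ring maps agreeing on the basis $G$, since $\lambda(g)^2=1$). A normalized ring isomorphism carries $\U_1(\Z G)$ onto $\U_1(\Z H)$, giving (2). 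For $(2)\Rightarrow(3)$ I would use that augmentation splits the unit group, $\U(\Z G)=\langle-1\rangle\times\U_1(\Z G)\cong C_2\times\U_1(\Z G)$, so a normalized-unit isomorphism extends by the identity on the $C_2$-factor.

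The heart is $(3)\Rightarrow(1)$. I would first observe that parts (2)--(3) of Lemma~\ref{IndependentFiniteSubgroup} hold for \emph{every} finite subgroup of $\U_1(\Z G)$: by $\Q$-independence any such subgroup has order at most $\dim_\Q\Q G=|G|$, the bound being attained by $G$ itself. Hence $|G|$ is the maximal order of a finite subgroup of $\U_1(\Z G)$, an invariant of that group, and correspondingly $2|G|$ is the maximal order of a finite subgroup of $\U(\Z G)$, attained by $\pm G$. Thus any isomorphism $\eta:\U(\Z G)\to\U(\Z H)$ forces $|G|=|H|=:n$, and $\eta(\pm G)$ is a finite subgroup of order $2n$. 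I would then set $M=\eta(\pm G)\cap\U_1(\Z H)$; since the maximal normalized finite subgroup has order only $n$, augmentation is nontrivial on $\eta(\pm G)$, so $|M|=n=|H|$, and by parts (2)--(3) $M$ is a $\Z$-basis of $\Z H$ that happens to be a finite group. A group isomorphism from the appropriate index-two subgroup of $\pm G$ onto $M$ then extends $\Z$-linearly to the sought ring isomorphism; restricting the whole discussion to $\U_1$ is what makes the $\U_1\leftrightarrow$ ring equivalence sign-free.

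The one genuine obstacle is that $\eta$ need not respect augmentation, so the transported basis group $M$ need not be isomorphic to $G$: writing $M\cong P:=\eta^{-1}(M)$, an index-two subgroup of $\langle-1\rangle\times G$, one checks $P\cong G$ precisely when $\eta(-1)$ is non-normalized, whereas otherwise $P\cong C_2\times G_1$, where $G_1$ is the kernel of $g\mapsto\mathrm{aug}_H(\eta(g))$. To handle the exceptional case I would invoke Berman's part (1): if $\eta(-1)$ is normalized it is a central involution $z_0$ of $H$, and symmetrically, if in addition $\eta^{-1}(-1)$ is normalized it is a central involution $w_0$ of $G$ with $\eta(w_0)=-1$. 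Then $w_0\notin P$ forces $G=\langle w_0\rangle\times G_1\cong C_2\times G_1$ while simultaneously $M\cong C_2\times G_1$, so $\Z G\cong\Z[C_2\times G_1]=\Z M=\Z H$ all the same; and in the remaining case, where $\eta^{-1}(-1)$ is non-normalized, the identical transport applied to $\eta^{-1}$ produces a basis group isomorphic to $H$ inside $\U_1(\Z G)$. Every case therefore yields $\Z G\cong\Z H$. I expect this sign/involution bookkeeping around $\pm1$ to be the only delicate point; everything else is linear algebra over $\Z$ powered by Lemma~\ref{IndependentFiniteSubgroup}.
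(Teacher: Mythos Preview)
Your proof is correct, but the paper runs the cycle in the opposite direction, $(1)\Rightarrow(3)\Rightarrow(2)\Rightarrow(1)$, and this reordering is precisely what lets it sidestep the sign/involution case analysis you identify as the delicate point. In the paper the substantive implication is $(2)\Rightarrow(1)$: given a group isomorphism $f:\U_1(\Z H)\to\U_1(\Z G)$, the image $f(H)$ is \emph{already} a finite subgroup of the normalized units, so Lemma~\ref{IndependentFiniteSubgroup} gives $|H|\le|G|$, symmetry gives equality, and part~(3) yields $\Z G=\Z f(H)\cong\Z H$ in one stroke---no intersection with $\U_1$, no tracking of where $\eta(\pm1)$ lands, no appeal to Berman. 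The normalization is pushed instead into $(3)\Rightarrow(2)$, where the paper twists an arbitrary unit-group isomorphism by the sign character $g\mapsto\mathrm{aug}_H(f(g))$. Your route concentrates all the content in $(3)\Rightarrow(1)$ and pays with the Berman-based case split on whether $\eta(-1)$ and $\eta^{-1}(-1)$ are normalized; in exchange your implications $(1)\Rightarrow(2)$ and $(2)\Rightarrow(3)$ are genuinely trivial, whereas the paper's $(3)\Rightarrow(2)$ normalization step is treated rather briskly and in fact requires some care (twisting a mere group isomorphism by a character is not automatically bijective).
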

 
 \begin{proof}
 Clearly (1) implies (3).
 For the other implication it is useful to turn any isomorphism into a normalized isomorphism. This is done as follows, for any commutative ring $R$.
 Let $f: \U(R G) \rightarrow \U(R H)$ be a group isomorphism. Define
  $$f^{*}: \U(RG) \rightarrow \U(RH) : \sum_{g\in G} r_g g \mapsto \sum_{g\in G} r_g \left( \text{aug}_{H} (f(g)) \right)^{-1} f(g).$$
 It is readily verified that $f^{*}$ is an  isomorphism that preserves augmentation, i.e. $\text{aug}_{H} (f^{*} (g)) =1$ for all $g\in G$ and thus 
 $\text{aug}_{H}(f^{*}(\alpha )) =\text{aug}_{G}(\alpha)$ for all $\alpha \in \U(RG)$.
 Hence, (3) implies (2).
 
 Now, assume (2) holds, i.e. assume  $f: \U_1 (\Z H) \rightarrow  \U_1 (\Z G)$ is a group isomorphism. Then $f(H)$ is  a finite subgroup of $ \U_{1}(\Z G)$ that is isomorphic to $H$. Hence, by Lemma~\ref{IndependentFiniteSubgroup}, $|H|=|f(H)|\leq |G|$. Similarly, $|G|\leq |H|$ and thus $|H|=|G|$.
 Furthermore, by Lemma~\ref{IndependentFiniteSubgroup}, the  $\Z f(H)= \Z G$  and thus we obtain an isomorphism $\Z H \rightarrow \Z G$, as desired.
 \end{proof}
  
Hence, (ISO) is equivalent with
  \begin{eqnarray*}
 \U (\Z G) \cong \U(\Z H) & \Longrightarrow & G\cong H. \quad \mbox{(ISO')}
 \end{eqnarray*}

It thus is a fundamental problem to describe the unit group of the integral group ring of a finite group. 
It is a hard problem to fully describe this group and hence  one often focusses on describing a large subgroup, i.e. 
a subgroup of finite index. Preferably one would like a torsion-free subgroup of index exactly $|G|$. In other words one has the following problem.
 
 \begin{problem}\label{Problem1Iso}: Let $G$ be a finite group. Does there exists a torsion-free normal subgroup of finite index, say $N$ such that
 $|\U_1 (\Z G)/N|= |G|$. This means that 
    $$\U_{1}(\Z G) = N \rtimes G,$$
 a semidirect product of groups (i.e. the inclusion $G\rightarrow \U_1 (\Z G)$ splits).
 \end{problem}

 It easily is verified (see \cite[Chapter 4]{SehgalBook2}) that an  affirmative answer to Problem~\ref{Problem1Iso} gives an affirmative answer to (ISO).   In case $G$ is a nilpotent group it is sufficient to check that there is 
 a normal complement. 
  
 Note that Roggenkamp and Scott gave a metabelian counter example to the problem (nevertheless,  Withcomb proved (ISO) holds for finite metabelian groups). However, because of the link with (ISO), it remains a challenge to determine classes of groups for which there is a positive answer. 
A positive answer to  Problem~\ref{Problem1Iso}  also has been proven for finite groups $G$ having an abelian normal subgroup $A$ such that either $G/A$ has exponent dividing 4 or 6, or $G/A$ is abelian of odd order (by Cliff-Sehgal-Weiss). We refer the reader to \cite{SehgalBook2} for proofs.
 
 However, the general problem remains open. Sehgal in  \cite[Problems 29 and 30]{SehgalBook2} stated two challenging problems. 
 
 \begin{problem}\label{Problem2930} (Sehgal)
 Does Problem~\ref{Problem1Iso} have an affirmative answer if $G$ is a finite nilpotent group? Even in case $G$ has nilpotency class three the answer is not known. For class two the answer is affirmative.
 \end{problem}
 
Nevertheless, using other methods,  (ISO) has been proven for the following classes of finite groups: metabelian groups (Whitcomb), nilpotent groups (Roggenkamp and Scott) and simple groups (Kimmerle, Lyons, Sandling).
Hertweck \cite{HertweckIso} has given a counter example to the isomorphism problem. It is a group 
of  order $2^{21} 97^{28}$, with a normal Sylow $97$-subgroup and the group has derived
length $4$.

\section{Construction of units}

In order to study the unit group  $\U (\Z G)$, with $G$ a finite group, one first would like to know  some generic construction of units. Apart from the trivial units there are two main constructions: the Bass units (introduced by Bass) and the bicyclic units (introduced by Ritter and Sehgal).

\vspace{12pt}
{\it Trivial units}\\ Clearly $\pm G \subseteq \U (\Z G)$. The elements of $\pm G$  are called the trivial units.

\vspace{12pt}
{\it Unipotent units and bicyclic units}\\
 Let $R$ be an associative ring with identity element $1$. Let $\eta$ be a nilpotent element of $R$, i.e. $\eta^{k}=0$ for some positive integer $k$.
Then 
 $$(1-\eta )(1+\eta + \cdots + \eta^{k-1})=1 = (1+\eta + \cdots + \eta^{k-1})(1-\eta).$$
 So, from nilpotent elements one can construct units. Note that the rational group algebra $\Q G$ has no non-zero nilpotent elements if and only if
 $\Q G$ is a direct sum of division algebras. Hence, for most finite groups the group algebra has nilpotent elements (the only exceptions being the abelian groups and the Hamiltonian groups of order $2^m t$, with $t$ an odd number such that the multiplicative order of $2$ modulo $t$ is odd).
 
One can construct nilpotent elements from almost idempotent elements $e\in R$ (i.e. $e^2=ne$ for some positive integer $n$).
For any $r\in R$,
   $$\left( (n-e) re\right)^{2}=0$$
   and thus
    $$1+(n-e)re$$
is a unipotent unit (with inverse $1-(n-e)re$).

Let $G$ be a finite group and let $e$ be an idempotent in $\Q G$
(recall that $\Z G$ only contains $0$ and $1$ as idempotents, see Section 6). Let $n_e$ be the smallest positive integer such that $n_e e\in \Z G$.
Then, for $h\in G$,
  $$ b(h,e) = 1 + n_e^{2} (1-e)he \quad \mbox{ and } \quad b(e,h) =1+n_e^{2} eh(1-e) $$
are unipotent units in $\Z G$. They are called {\it generalized bicyclic units}.

In rational group algebras one can easily construct idempotents. Indeed, let $g\in G$ be an element of order $n$. Then,
  $$\widehat{g} =\widehat{\langle g \rangle} =\frac{1}{n} \widetilde{\langle g \rangle} = \frac{1}{n} \widetilde{g}= \frac{1}{n} \sum_{0\leq i \leq n-1} g^{i}$$
is an idempotent in $\Q G$ and $\widetilde{g}$ is an almost idempotent in $\Z G$.
The units
   $$ b(h,\widetilde{g}) = 1 +  (1-g)h\widetilde{g}  \quad \mbox{ and } \quad b(\widetilde{g},h) =1+\widetilde{g} h(1-g) $$
are called the {\it bicyclic units} of $\Z G$.
Obviously, $ b(h,\widetilde{g})^{-1} =b(-h, \widetilde{g})$.
Note that a bicyclic unit  $b(h,\widetilde{g})$ is trivial unit if and only if $h\in N_G(\langle g \rangle)$; otherwise it is a unit of infinite order.
 Note that  $b(h,\widehat{g})=b(\alpha, \widetilde{g})$ for some $\alpha \in \Z \langle g \rangle$.

 \vspace{12pt}
 {\it  Cyclotomic units and Bass units}\\
 Let $R$ be an associative ring and $x$ a unit of finite order $n$. Let $k$ and $n$ be relatively prime positive integers and let $m$ be a positive integer such that $k^m \equiv 1$ mod $n$.
 Then
  $$u_{k,m}(x) =(1+x + \cdots + x^{k-1})^{m} + \frac{1-k^{m}}{n} (1+ x + \cdots + x^{n-1})$$
 is an invertible element in $R$ with inverse $u_{l,m}(x^k)$, where $l$ is a positive integer such that $kl \equiv 1$ mod $n$.
 Note that if $R$ is a domain and $x\neq 1$ then $(1-x)((1+x+\cdots + x^{n-1})=0$ implies that $1+x+\cdots + x^{n-1} =0$ and thus, in this case,
 $u_{k,m}=(1+x + \cdots + x^{k-1})^{m}$. If, furthermore, $R$ is a field then
 $u_{k,m}=(1+x + \cdots + x^{k-1})^{m}=\left(\frac{1-x^{k}}{1-x}\right)^{m}$.
 The unit $\frac{1-x^{k}}{1-x}$ is called a {\it cyclotomic unit} and is denoted 
    $$\eta_{k}(x) =\frac{1-x^{k}}{1-x}.$$ 
 Note that
 $(\eta_{k}(x))^{-1} =\eta_{l}(x^{k})$, where $l$ is a positive integer such that $lk\equiv 1$ mod $n$.
 Hence $\eta_{k}(x)\in \U (\Z [x])$.

 We also remark that if $x\in R$ is a unit of finite odd order then $-x$ has even order and $u_{k,m}(-x) = (1-x+x^{2} + \cdots + (-1)^{k-1})^{m}$.
 Such units are called {\it alternating units} in integral group rings \cite{SehgalBook2}.
 
 Let $G$ be a finite group. We remark that, for $g\in G$,
  $$u_{k,m}(g) = u_{k',m}(g) \quad \quad \text {if } k\equiv k' \text{ mod } |g|.$$
 Hence, in the definition of $u_{k,m}(g)$  we may assume that $1< k<|g|$.
 The units $u_{k,m}(g)$, with $g\in G$ and $(k,|g|)=1$ are called the {\it Bass units} of $\Z G$. These were introduced by Bass in \cite{Bass}.
 One can also show that
   $$u_{k,m}(g) u_{k,m_{1}}(g) = u_{k,m+m_{1}}(g).$$

 We now show that almost all Bass units are of infinite order.

 \begin{lemma}\label{BassTorsionL}
 Let $G$ be a finite group and $g\in G$.
A Bass unit $u_{k,m}(g)$ is torsion if and only if $k\equiv \pm 1 \mod |g|$.
\end{lemma}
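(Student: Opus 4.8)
The plan is to pass to the rational group algebra of the cyclic group $\langle g\rangle$ and read off the order of $u_{k,m}(g)$ component by component. Writing $n=|g|$ and using $\widehat{g}=\frac{1}{n}\widetilde{g}$, one first rewrites the Bass unit as
$$u_{k,m}(g)=(1+g+\cdots+g^{k-1})^{m}+(1-k^{m})\,\widehat{g}.$$
Since $\langle g\rangle$ is cyclic of order $n$, the Wedderburn decomposition is $\Q\langle g\rangle\cong\prod_{d\mid n}\Q(\zeta_{d})$, where the projection onto the component $\Q(\zeta_{d})$ is induced by sending $g$ to $\zeta$, a primitive $d$-th root of unity. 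A unit of the order $\Z\langle g\rangle$ is torsion precisely when its image in each simple component is a root of unity, so it suffices to evaluate $u_{k,m}(g)$ under each such projection.

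First I would compute these images. On the trivial component ($d=1$) we have $g\mapsto1$, whence $1+g+\cdots+g^{k-1}\mapsto k$ and $\widehat{g}\mapsto1$, so $u_{k,m}(g)\mapsto k^{m}+(1-k^{m})=1$. On a non-trivial component ($d\mid n$, $d>1$) we have $\widetilde{g}\mapsto0$, hence $\widehat{g}\mapsto0$, while $1+g+\cdots+g^{k-1}\mapsto\frac{1-\zeta^{k}}{1-\zeta}=\eta_{k}(\zeta)$ (here $\gcd(k,d)=1$, so this is the genuine cyclotomic unit). Thus the image is $\eta_{k}(\zeta)^{m}$. Since $m\geq1$, $\eta_{k}(\zeta)^{m}$ is a root of unity if and only if $\eta_{k}(\zeta)$ is. Consequently $u_{k,m}(g)$ is torsion if and only if $\eta_{k}(\zeta_{d})$ is a root of unity for every divisor $d>1$ of $n$.

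The heart of the argument is to decide when the cyclotomic unit $\eta_{k}(\zeta)=\frac{1-\zeta^{k}}{1-\zeta}$, with $\zeta$ a primitive $d$-th root of unity, is a root of unity. The implication from $k\equiv\pm1\pmod d$ is a direct computation: if $k\equiv1$ then $\zeta^{k}=\zeta$ and $\eta_{k}(\zeta)=1$, while if $k\equiv-1$ then $\zeta^{k}=\zeta^{-1}$ and $\eta_{k}(\zeta)=-\zeta^{-1}$, both roots of unity. For the converse I would invoke Kronecker's theorem, that an algebraic integer all of whose images under the embeddings into $\C$ have absolute value $1$ is a root of unity; it therefore suffices to exhibit a single embedding at which $|\eta_{k}(\zeta)|\neq1$. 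Taking $\zeta=e^{2\pi i/d}$ and using $|1-e^{i\theta}|=2|\sin(\theta/2)|$ gives $|\eta_{k}(\zeta)|=|\sin(\pi k/d)|/|\sin(\pi/d)|$, and an elementary analysis of the sine shows this equals $1$ exactly when $k\equiv\pm1\pmod d$. Hence $\eta_{k}(\zeta_{d})$ is a root of unity iff $k\equiv\pm1\pmod d$. This step, combining Kronecker's theorem with the sine estimate, is the part that carries the real content; everything else is bookkeeping.

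Finally I would assemble the pieces. If $k\equiv\pm1\pmod n$ then $k\equiv\pm1\pmod d$ for every $d\mid n$, so every component is a root of unity and $u_{k,m}(g)$ is torsion. Conversely, if $u_{k,m}(g)$ is torsion then its image in the faithful component $\Q(\zeta_{n})$ is a root of unity, which forces $k\equiv\pm1\pmod n$ by the previous paragraph applied with $d=n$. This yields the stated equivalence, with $n=|g|$.
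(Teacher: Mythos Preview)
Your argument is correct and follows the same route as the paper: project to $\Q(\zeta_n)$ via $g\mapsto\zeta_n$, observe that the image of $u_{k,m}(g)$ is $\eta_k(\zeta_n)^m$, and then decide whether $\eta_k(\zeta_n)$ is a root of unity by comparing $|1-\zeta_n^k|$ with $|1-\zeta_n|$ (the paper phrases this last step geometrically as equidistance of two vertices of a regular $n$-gon from $1$, you via the sine formula---these are the same computation). Two small remarks: your appeal to Kronecker's theorem is superfluous, since the only implication you actually use is the trivial one that a root of unity has modulus $1$ at every embedding; and for the direction $k\equiv\pm1\Rightarrow$ torsion, the paper instead computes directly in $\Z G$ that $u_{n-1,2}(g)=g^{-2}$ and hence $u_{n-1,m}(g)=g^{-m}$, which is slightly more informative than the component-by-component verification.
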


\begin{proof}
Let $n=|g|$ and let $u=u_{k,m}(g)$. If $k\equiv 1$ mod $n$ then $u=1$ and the result is
clear in this case. 

So, assume that $k\not\equiv 1$ mod $n$ and in particular $n>1$. 
If $k=n-1$ and $m=2$ then $u=(\widetilde{g}-g^{n-1})^2-\frac{1-(n-1)^2}{n}\widetilde{g}
= g^{-2}$. If $k\equiv -1$ mod $n$ then
$m$ is a multiple of $2$ and thus 
$u=u_{n-1,m}(g)=u_{n-1,2}(g)^{\frac{m}{2}}
= g^{-m}$. 
This proves that if $k\equiv
\pm 1$ mod $n$ then $u$ is torsion.

Conversely, assume that $u$ is torsion. 
Let $\zeta$ be a complex root of unity of order $n$.
By the Universal Property of Group Rings, the
group isomorphism $\langle g \rangle \rightarrow
\langle \zeta \rangle$, mapping $g$ to $\zeta$, extends to
a ring homomorphism $f:\Z G\rightarrow \C$. As
$n>1$, $f(\widetilde{g})=0$ and therefore
$f(u)=\eta_k(\zeta)^m$. Since $u$ is torsion,
$f(u)$ is a root of unity, hence so is
$\eta_k(\zeta)$. This implies that
$|\zeta^k-1|=|\zeta -1|$. Thus  $\zeta$ and
$\zeta^k$ are two vertices of a regular polygon
with $n$ vertices so that $\zeta$ and $\zeta^k$
are at the same distance to $1$. This implies
that $\zeta^k$ is either $\zeta$ or
$\overline{\zeta}=\zeta^{-1}$. Then $k\equiv \pm 1$ mod $n$, as desired.
\end{proof}

We have  introduced two type of units: the Bass units and the bicyclic units. 
The constructions of these are based on the cyclotomic units and unipotent units.
These units are of great importance for the unit group $\U (\Z G)$. The main reason being the following results.

\begin{theorem}
Let $\xi$ be a complex root of unity. The cyclotomic units of $\Z [\xi ]$ generate a subgroup of finite index in $\U (\Z [\xi ])$.
\end{theorem}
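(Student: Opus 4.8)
The plan is to recognize $\U(\Z[\xi])$ as the full unit group $\U(\mathcal{O}_K)$ of the ring of integers of the cyclotomic field $K=\Q(\xi)=\Q(\zeta_n)$, where $n$ is the order of $\xi$, and to read off its rank from Dirichlet's unit theorem: $\U(\mathcal{O}_K)\cong \mu_K\times\Z^r$ is a finitely generated abelian group of free rank $r=r_1+r_2-1=\tfrac{\varphi(n)}{2}-1$ (for $n\le 2$ the rank is $0$ and nothing is to be proved, so one assumes $n>2$, whence $K$ is totally imaginary). In a finitely generated abelian group a subgroup has finite index exactly when it has the same free rank, so it suffices to show that the subgroup $C$ generated by the cyclotomic units $\eta_k(\xi)=\frac{1-\xi^k}{1-\xi}$, $(k,n)=1$, already has free rank $r$; equivalently, I must exhibit $r$ cyclotomic units that are multiplicatively independent modulo torsion.

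To detect independence I would pass to the Dirichlet logarithmic embedding $\lambda\colon \U(\mathcal{O}_K)\to \R^{r_2}$, $u\mapsto(\log|\sigma(u)|)_\sigma$, where $\sigma$ ranges over representatives of the archimedean places; its kernel is exactly $\mu_K$, and its image is a full lattice in the trace-zero hyperplane $H_0=\{x:\sum_\sigma x_\sigma=0\}$ of dimension $r$. The goal then becomes: the vectors $\lambda(\eta_k(\xi))$ span $H_0$ over $\R$. Writing $\sigma_j\colon\xi\mapsto\zeta_n^{j}$ for the embeddings, one has $\log|\sigma_j(\eta_k)|=\log|1-\zeta_n^{jk}|-\log|1-\zeta_n^{j}|$, so everything is governed by the real matrix with entries $g(a)=\log|1-\zeta_n^{a}|$ indexed by $a\in(\Z/n)^\times$ (and $g$ is even, since $|1-\zeta_n^{-a}|=|1-\zeta_n^{a}|$).

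The key step is to show this regulator matrix has rank $r$. Since $\operatorname{Gal}(K/\Q)\cong(\Z/n)^\times$ acts compatibly on the places (via the $\sigma_j$) and on the cyclotomic units (via $\eta_k^{\sigma_j}=\eta_k(\xi^{j})$), the matrix is a group matrix over $(\Z/n)^\times/\{\pm1\}$ and is simultaneously diagonalized by the even Dirichlet characters modulo $n$; the $k$-independent term $-\log|1-\zeta_n^{j}|$ is a rank-one piece sitting in the trivial-character direction and is annihilated on $H_0$. The eigenvalue attached to an even character $\chi$ is a nonzero scalar times the character sum $\sum_{a}\chi(a)\,g(a)$. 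I would then use the Fourier expansion $\log|2\sin\pi x|=-\sum_{m\ge1}\frac{\cos 2\pi m x}{m}$ together with a Gauss-sum manipulation to identify this sum, for nontrivial even $\chi$, with a nonzero multiple of $L(1,\bar\chi)$.

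The main obstacle — and the one genuinely analytic ingredient — is precisely the nonvanishing of these eigenvalues, i.e. that $L(1,\bar\chi)\ne 0$ for every nontrivial Dirichlet character $\chi$; this is Dirichlet's nonvanishing theorem (equivalently, what the analytic class number formula for $K$, respectively for its maximal real subfield $K^{+}$, encodes, and indeed the exact index $[\U(\mathcal{O}_K):C]$ emerges as the class number $h^{+}$ of $K^{+}$). Granting this nonvanishing, each of the $\tfrac{\varphi(n)}{2}-1$ nontrivial even characters contributes a nonzero eigenvalue, so the regulator matrix has rank $r$; hence $\lambda(C)$ is a full-rank sublattice of $H_0$, $C$ has free rank $r$, and therefore $C$ is of finite index in $\U(\Z[\xi])$. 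What remains is bookkeeping that does not affect finiteness: that $\eta_{-1}(\xi)$ is torsion, that $\eta_k$ and $\eta_{-k}$ have equal absolute values, and the comparison between the units of $K$ and those of $K^{+}$ (the unit index $Q\in\{1,2\}$).
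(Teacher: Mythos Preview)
The paper states this theorem without proof, citing it as a classical result, so there is no ``paper's proof'' to compare against. Your overall strategy---Dirichlet's logarithmic embedding, diagonalisation by characters, and identification of the eigenvalues with $L(1,\bar\chi)$---is exactly the classical one and is correct when $n$ is a prime power.

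There is, however, a genuine gap when $n$ has at least two distinct prime factors. You assert that the Gauss--sum manipulation gives, for every nontrivial even $\chi\bmod n$, a \emph{nonzero} multiple of $L(1,\bar\chi)$. That is only true when $\chi$ is primitive. If $\chi$ has conductor $f\mid n$ with $f<n$ and primitive part $\chi^{*}$, the distribution relation $\prod_{j=0}^{n/f-1}(1-\zeta_n^{\,a+jf})=1-\zeta_f^{\,a}$ yields
\[
\sum_{a\in(\Z/n\Z)^{\times}}\chi(a)\,\log\bigl|1-\zeta_n^{\,a}\bigr|
\;=\;\Bigl(\prod_{p\mid n,\;p\nmid f}\bigl(1-\chi^{*}(p)\bigr)\Bigr)\cdot
\sum_{b\in(\Z/f\Z)^{\times}}\chi^{*}(b)\,\log\bigl|1-\zeta_f^{\,b}\bigr|,
\]
and the Euler factor in front can vanish. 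Concretely, take $n=55$: the quadratic character $\chi_5$ modulo $5$ is even, and since $11\equiv 1\pmod 5$ one has $\chi_5(11)=1$, so the corresponding eigenvalue is $(1-\chi_5(11))\cdot(\text{nonzero})=0$. Hence the logarithmic image of $\langle\eta_k(\zeta_{55}):(k,55)=1\rangle$ has rank at most $18<19=\tfrac{\varphi(55)}{2}-1$, and this group is of \emph{infinite} index in $\U(\Z[\zeta_{55}])$. In other words, the statement you set out to prove---with cyclotomic units interpreted as only the $\eta_k(\xi)$, $(k,n)=1$---is false for such $n$, so no argument can succeed.

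The repair is to read ``cyclotomic units of $\Z[\xi]$'' as including $\eta_k(\xi^{j})$ for all $j$ (equivalently, the units $1-\zeta_d^{\,a}$ coming from every subfield $\Q(\zeta_d)$, $d\mid n$). These additional units supply the missing character components, and the finite-index statement then follows from the Ramachandra--Sinnott theory; the regulator computation you sketched goes through once the enlarged generating set is used, and the index is $h^{+}$ times an explicit power of $2$. For $n=p^{m}$ your argument is already complete and gives index exactly $h^{+}$.
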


For a ring $R$ and positive integer $n$, we denote by 
   $$e_{ij}(r)\in M_n(R)$$ 
the unipotent matrix $1+rE_{ij}$, where $E_{ij}$ is the elementary matrix that has only one nonzero entry (at position $(i,j)$) and this entry equals $1$. A useful formula is 
   $$E_{ij}E_{kl}=\delta_{jk}E_{il}.$$

\begin{proposition}$\;$\newline
\vspace{-11pt}
\begin{enumerate}
\item
The group $SL_{n}(\Z)$ is generated by the matrices $e_{ij}=1+E_{ij}$ with $i\neq j$.
\item (Sanov) Let $z_1,z_2\in \C$ such that $|z_1z_2|\geq 4$ then $\langle e_{12}(z_1), e_{21}(z_2)\rangle$ is a free group of rank 2.
\item The group $\left\{ \left( \begin{array}{cc} a&b\\ c&d\end{array} \right) \in SL_{2}(\Z ) \mid a\equiv d \equiv  1\mbox{ mod } 4,\;\right\}$
is a free group of rank $2$ generated by 
$e_{12}(2) $ and $e_{21}(2)$.
\end{enumerate}
\end{proposition}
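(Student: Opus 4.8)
The plan is to treat the three parts in the order stated, since each feeds the next.

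For (1) I would argue by the integral Euclidean algorithm realised through elementary operations. Left multiplication by $e_{ij}(r)$ (with $i\neq j$) adds $r$ times row $j$ to row $i$, right multiplication adds $r$ times column $i$ to column $j$, and both are invertible over $\Z$ with inverse $e_{ij}(-r)$. Given $A\in SL_n(\Z)$, I would use the first column together with the $\gcd$-producing steps of the Euclidean algorithm — each step being such a row operation — to bring a $\pm 1$ to the $(1,1)$ position, then clear the remainder of the first column and first row by further elementary operations. Because $\det A=1$, the reduced matrix is $1\oplus A'$ with $A'\in SL_{n-1}(\Z)$, and I would induct on $n$, the case $n=1$ being trivial. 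The only point to watch is that no diagonal (scaling) generator is ever needed: the determinant-one hypothesis keeps the pivots units, and any residual sign is corrected within $\langle e_{ij}:i\neq j\rangle$. Since every operation used lies in this group and the end product is $I$, the matrix $A$ is itself such a product.

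For (2) I would apply the ping-pong (table-tennis) lemma to the action of $\langle e_{12}(z_1),e_{21}(z_2)\rangle$ on column vectors in $\C^2$. First I would normalise: conjugating by $\diag(t,t^{-1})$ sends $e_{12}(z_1)\mapsto e_{12}(t^2 z_1)$ and $e_{21}(z_2)\mapsto e_{21}(t^{-2}z_2)$, leaving the product $z_1z_2$ — and the isomorphism type of the group — unchanged; choosing $t$ with $|t^2z_1|=|t^{-2}z_2|$ reduces matters to the balanced case $|z_1|=|z_2|=\sqrt{|z_1z_2|}\geq 2$. Writing $a=e_{12}(z_1)$, $b=e_{21}(z_2)$, set $X=\{(x,y):|x|>|y|\}$ and $Y=\{(x,y):|y|>|x|\}$. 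A direct estimate with the reverse triangle inequality gives, for every $n\neq 0$, that $a^n(Y)\subseteq X$ and $b^n(X)\subseteq Y$; the inequalities come out strict precisely because $|z_i|\geq 2$, so the boundary value $|z_1z_2|=4$ is still covered. As $X\cap Y=\emptyset$, the ping-pong lemma yields $\langle a,b\rangle\cong\langle a\rangle*\langle b\rangle$, a free group of rank $2$.

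For (3) the freeness is immediate from (2) since $|2\cdot 2|=4$; it remains to identify $\langle e_{12}(2),e_{21}(2)\rangle$ with the displayed matrix group. (To be a group the displayed set must be read inside the principal congruence subgroup $\Gamma(2)$, i.e. with $b,c$ even; the relations $ad-bc=1$ and $a\equiv d\equiv 1$ mod $4$ then force $4\mid bc$ and $a\equiv d$ mod $4$, so the condition is consistent.) The inclusion $\subseteq$ is clear, as both generators satisfy the congruences. For the reverse inclusion I would run a congruence-controlled version of the reduction in (1): given such an $M$, left multiplication by powers of $e_{12}(2)^{\pm 1}$ and $e_{21}(2)^{\pm 1}$ performs even Euclidean steps on the first column $(a,c)$, keeping $a$ odd and $c$ even and strictly decreasing $\max(|a|,|c|)$ until $c=0$. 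Then $ad=1$ with $a\equiv 1$ mod $4$ forces $a=d=1$, leaving $\begin{pmatrix}1&b\\0&1\end{pmatrix}$ with $b$ even, i.e. a power of $e_{12}(2)$. Unwinding the multiplications exhibits $M$ as a word in the two generators.

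The main obstacle I expect is this descent in (3): one must verify that the Euclidean reduction goes through using only the even elementary moves and never stalls, and that the mod-$4$ condition is exactly what selects the free subgroup of index $2$ in $\Gamma(2)$ — equivalently, that it terminates at $I$ rather than $-I$ and so excludes $-I$. The boundary analysis at $|z_1z_2|=4$ in the ping-pong of (2) is the secondary delicate point, and is disposed of by the balancing conjugation above.
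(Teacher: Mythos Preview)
The paper does not supply a proof of this proposition; it is a survey, and this result is stated without argument (the reader is implicitly referred to \cite{bookvol1,bookvol2}).  There is therefore nothing to compare against.

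Your proposed arguments are the classical ones and are correct.  For (1) the Euclidean reduction is standard; note that since the entries of the first column of $A\in SL_n(\Z)$ have $\gcd$ equal to $1$, you can in fact reduce directly to a $1$ (not merely $\pm 1$) in the pivot position, which avoids the sign discussion altogether.  For (2) the balancing conjugation by $\diag(t,t^{-1})$ is exactly the right device to make the ping-pong estimates symmetric, and your check that the boundary case $|z_1|=|z_2|=2$ still gives strict containment is sound.

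For (3) your reading is correct: as printed, the displayed set is not closed under multiplication (e.g.\ $e_{12}(1)$ and $e_{21}(1)$ both satisfy $a\equiv d\equiv 1\pmod 4$ but their product does not); the trailing comma in the paper's condition indicates an omitted clause, and the intended group is indeed the index-$2$ subgroup of $\Gamma(2)$ with $a\equiv d\equiv 1\pmod 4$ and $b,c$ even, which excludes $-I$.  Your descent does not stall: with $a$ odd and $c$ even throughout, each even Euclidean step brings the larger of $|a|,|c|$ strictly below the smaller, because an odd residue cannot equal an even bound (and vice versa), so the process terminates at $c=0$, whence $a=d=1$ by the congruence and determinant conditions.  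The obstacle you flagged is thus not an obstacle.
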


Let $R$ be a commutative Noetherian domain with field of fractions $F$ and let $A$ be a finite dimensional $F$-algebra.
A full $R$-lattice in a finite dimensional  $F$-vector space $V$ is a finitely generated $R$-submodule of $V$ (i.e. an $R$-lattice in $V$) that contains a 
basis of $V$.
An $R$-order in $A$ is a subring of $A$ which also is a full $R$-lattice in $A$.
A $\Z$-order will be simply called an order. Because $\Z$ is a PID, an order contains a $\Z$-basis  and this obviously also is a $\Q$-basis of $A$.
Clearly,  $M_{n}(R)$ is an $R$-order in $M_{n}(F)$. Also, if $\mathcal{O}$ is an order in $A$
then $M_{n}(\mathcal{O})$ is an order in $M_{n}(A)$. The integral quaternions $\left( \frac{-1,-1}{\Z }\right) $ is a order in the division algbera $\left( \frac{-1,-1}{\Q}\right)$ (see Section 4).
Obviously, if $G$ is a finite group then $\Z G$ is an order in $\Q G$

 With ``elementary methods'' (see \cite[Chapter 1]{bookvol1}) one can calculate the unit group of some some well known rings. By $\xi_n$ we denote a complex root of unity of order $n$.
 \begin{enumerate}
 \item $\U (\Z )=\{ -1, 1\}$.
 \item
 $\U (\Z [i]) =\{ 1,-1, i, -i\}$.
 \item
 $\U (\Z [\xi_{3}])=\{ \pm 1, \pm \xi_3 , \pm \xi_3^2 \}$.
 \item 
 $\U (\Z [\xi_{6}])=\langle \xi_6 \rangle$.
 \item
 $\U (\Z [\xi_{8}])=\langle \xi_8\rangle \times \langle 1+\sqrt{2} \rangle = \langle \xi_8 \rangle \times \langle \eta_3(\xi_8) \rangle 
 =C_8 \times C_{\infty} $ and $\eta_{3}(\xi_8) = 1+\xi_8 + \xi_8^2$.
 \item
 $U(\Z C_5) = \pm C_5 \times \langle g+g^4 -1\rangle$, where $C_5 =\langle g \mid g^5=1 \rangle$.
 \item $U_{1} (\Z C_8) = C_8 \times \langle u_{3,2}(g)\rangle = C_8 \times C_{\infty}$, where $C_8=\langle g \mid g^8=1\rangle$.
 \item
 $\U \left(\left( \frac{-1,-1}{\Z }\right) \right) =Q_8$, where $Q_8$ is the quaternion group of order $8$.
 \item (Higman) $\U_{1} (\Z Q_8) =Q_8$
 \item $\U_1 (\Z D_8) =B \rtimes D_8$, where $B$ is the subgroup generated by the bicyclic units. Furthermore,
 $B$ is a free group of rank $3$
 \end{enumerate}
 
 The previous list contains several examples of unit groups that are finite. Actually all relevant groups are included in these examples as shown by the
 following result of Higman.

\begin{theorem} (Higman)
The following conditions are equivalent for a finite group $G$.
\begin{enumerate}
\item $\U_1 (\Z G)$ is finite.
\item $\U_1 (\Z G) =G$.
\item $G$ is an abelian group of exponent dividing $4$ or $6$, or $G\cong Q_8 \times E$, with $Q_8$ the quaternion group of order $8$ and $E$ an elementarry abelian $2$-group (i.e. a direct product of copies of the cyclic group $C_2$ of order $2$).
\end{enumerate}
\end{theorem}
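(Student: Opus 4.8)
The plan is to reduce finiteness of $\U_{1}(\Z G)$ to a condition on the simple components of $\Q G$, and then to read off the group from that condition. Since $\U(\Z G)=\pm\U_{1}(\Z G)$, finiteness of $\U_{1}(\Z G)$ is equivalent to finiteness of $\U(\Z G)$; and because $\Z G$ is an order in the semisimple algebra $\Q G$ and any two orders in $\Q G$ are commensurable, this is equivalent to finiteness of the unit group of a maximal order. Writing the Wedderburn decomposition $\Q G=\prod_{i}M_{n_{i}}(D_{i})$, a maximal order splits up to commensurability as a product of orders in the factors, so $\U_{1}(\Z G)$ is finite if and only if each factor $M_{n_{i}}(D_{i})$ admits an order with finite unit group. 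By the arithmetic of orders (Dirichlet's unit theorem for the commutative factors, and the computation of the unit rank of an order in a division algebra from its archimedean completions), the factors with finite unit group are exactly $\Q$, the imaginary quadratic number fields, and the totally definite quaternion algebras over totally real fields, in particular $\quat{-1,-1}{\Q}$. A matrix factor of size $n_{i}\ge 2$ is excluded, since its orders contain a nonabelian free group by Sanov's theorem.

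Next I would prove $(1)\Rightarrow(3)$. Assume $\U_{1}(\Z G)$ is finite. If some cyclic subgroup $\langle g\rangle$ of $G$ were not normal, then choosing $h\notin N_{G}(\langle g\rangle)$ the bicyclic unit $b(h,\widetilde{g})$ would have infinite order, contradicting finiteness; hence every subgroup of $G$ is normal, i.e. $G$ is a Dedekind group. If $G$ is abelian, then $\Q G$ is a product of full cyclotomic fields $\Q(\xi_{d})$ with $d\mid\exp(G)$, and $\Q(\xi_{\exp(G)})$ actually occurs; finiteness forces $\varphi(\exp(G))\le 2$, that is $\exp(G)\in\{1,2,3,4,6\}$, so $\exp(G)$ divides $4$ or $6$. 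If $G$ is nonabelian, the Dedekind--Baer structure theorem for Hamiltonian groups gives $G\cong Q_{8}\times E\times A$ with $E$ an elementary abelian $2$-group and $A$ abelian of odd order, and it remains to force $A=1$. Suppose $A\neq 1$ and take an element of odd prime order $p$ in $A$. If $p\ge 5$, then $\Q(\xi_{p})$ is a component of $\Q A\subseteq\Q G$ with infinite unit group. If $p=3$, then from $\Q G=\Q Q_{8}\otimes_{\Q}\Q A\otimes_{\Q}\Q E$ and $\quat{-1,-1}{\Q}\otimes_{\Q}\Q(\xi_{3})=\quat{-1,-1}{\Q(\xi_{3})}$ the quaternion algebra $\quat{-1,-1}{\Q(\xi_{3})}$ occurs as a component of $\Q G$; since $\Q(\xi_{3})$ is not totally real this algebra is not totally definite, so its order has infinite unit group. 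Either way finiteness is violated, so $A=1$ and $G\cong Q_{8}\times E$.

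For $(3)\Rightarrow(2)$ I would first check that the listed groups satisfy the component criterion of the first paragraph. For $G$ abelian of exponent dividing $4$ or $6$ every $\Q(\xi_{d})$ with $d\in\{1,2,3,4,6\}$ is one of $\Q,\Q(i),\Q(\xi_{3})$; and for $G=Q_{8}\times E$, using $\Q Q_{8}=\Q^{4}\oplus\quat{-1,-1}{\Q}$ and $\Q E\cong\Q^{|E|}$ one gets $\Q G\cong\Q^{4|E|}\oplus\quat{-1,-1}{\Q}^{\,|E|}$, with every component equal to $\Q$ or $\quat{-1,-1}{\Q}$. Hence $\U_{1}(\Z G)$ is finite, so every normalized unit is torsion. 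When $G$ is abelian every unit is central, so Berman's part of Lemma~\ref{IndependentFiniteSubgroup} gives $\U_{1}(\Z G)\subseteq Z(G)=G$, whence $\U_{1}(\Z G)=G$. When $G=Q_{8}\times E$ I would obtain equality from the explicit unit computations recorded above, in particular $\U\!\left(\quat{-1,-1}{\Z}\right)=Q_{8}$ and $\U_{1}(\Z Q_{8})=Q_{8}$, by projecting $\U_{1}(\Z G)$ into the product of the unit groups $\{\pm 1\}$ and $Q_{8}$ of orders in the simple components and comparing orders with $|G|$.

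The main obstacle is the arithmetic input of the first paragraph: the precise determination of which simple algebras admit an order with finite unit group, and in particular the fact that among noncommutative division algebras only the totally definite quaternion algebras qualify, via the rank formula for the unit group in terms of the archimedean completions. Everything else is comparatively soft, since the reduction to Dedekind groups is immediate from the infinitude of nontrivial bicyclic units, the abelian case is elementary cyclotomic bookkeeping, and the converse for $Q_{8}\times E$ ultimately rests on the single computation $\U_{1}(\Z Q_{8})=Q_{8}$.
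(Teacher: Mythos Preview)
Your argument is essentially correct, but it takes a different route from the one the paper sketches. The paper only gives a hint rather than a proof: use the Bass units, the bicyclic units, and the observation that finiteness of $\U_1(\Z G)$ implies finiteness of $\U_1(\Z(G\times C_2))$. Concretely, Lemma~\ref{BassTorsionL} already forces every element of $G$ to have order in $\{1,2,3,4,6\}$ once $\U_1(\Z G)$ is finite (no Dirichlet needed), bicyclic units force $G$ to be Dedekind (as you do), and the $C_2$-doubling trick gives the induction step for $(3)\Rightarrow(1)$ in the $Q_8\times E$ case via $\Q(G\times C_2)\cong\Q G\times\Q G$. Your approach instead passes through the full arithmetic classification of simple algebras with finite unit groups (Dirichlet plus the rank formula for orders in division algebras) and a tensor-product analysis of the Hamiltonian case. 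Both are valid; yours is more structural and explains \emph{why} the list in $(3)$ is what it is, while the paper's hint is considerably more elementary, needing only the explicit unit constructions already on the page and avoiding the heavier input you flag as the ``main obstacle.''

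One point to tighten: your $(3)\Rightarrow(2)$ for $G=Q_8\times E$ by ``projecting into the product of unit groups and comparing orders with $|G|$'' is vague, since that product is much larger than $|G|$. A clean and uniform way to get $(1)\Rightarrow(2)$, which works for both the abelian and the Hamiltonian case at once, is to apply Lemma~\ref{IndependentFiniteSubgroup}(2) to the finite subgroup $\U_1(\Z G)$ itself: it gives $|\U_1(\Z G)|\le|G|$, and since $G\subseteq\U_1(\Z G)$ equality follows. This replaces both your Berman argument and your projection argument.
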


For the proof of this result  one can make use of the Bass units and bicylic units and of the fact that if $\U_1 (\Z G)$ is finite then so is the unit group $\U_1 (\Z (G\times C_2))$.

So, for almost all finite groups $G$, the unit group  $\U_1 (\Z G)$ is infinite.
Actually one can prove that if the unit group $\U (\Z G)$ is infinite and $G$ is not abelian and not a  Hamiltonian group, i.e. not all subgroups are normal,
then $\U (\Z G)$ contains a free group of rank 2 generated by two bicyclic units.

To prove this result  Salwa \cite{Salwa} showed a more general result.

\begin{theorem}
Let $R$ be a torsion-free ring and $a,b\in R$ such that $a^2=b^2=0$. Then the group $\langle 1+a , 1+b\rangle$ is free if and only if either $ab$ is transcendental or $ab$ is algebraic (over $\Q$) and one of the eigenvalues $\lambda$ of $ab$ is a free point (that is $\langle e_{12}(1),e_{21}(\lambda)\rangle$
is a free group).

\end{theorem}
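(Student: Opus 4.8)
Write $u=1+a$ and $v=1+b$; since $a^2=b^2=0$ these are unipotent units with $u^n=1+na$ and $v^n=1+nb$, so every element of $G:=\langle u,v\rangle$ is an alternating product $u^{n_1}v^{m_1}u^{n_2}\cdots$. The plan is to reduce the freeness of $G$ to the freeness of the matrix groups $\langle e_{12}(1),e_{21}(\lambda)\rangle$, by exhibiting, for each eigenvalue $\lambda$ of $ab$, a homomorphism under which $u\mapsto e_{12}(1)$ and $v\mapsto e_{21}(\lambda)$. Let $F$ be the free group on $x,y$ and let $\psi\colon F\to\U(R)$ be the homomorphism with $\psi(x)=u$ and $\psi(y)=v$; then $G$ is free on $u,v$ precisely when $\psi$ is injective. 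I would phrase the whole argument as a comparison of $\ker\psi$ with the kernels of the analogous maps $\psi_\lambda\colon F\to\SL_2$ given by $x\mapsto e_{12}(1)$, $y\mapsto e_{21}(\lambda)$.

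First I would build the representations. The only relations forced on the subring $S=\langle a,b\rangle\subseteq R$ are $a^2=b^2=0$, and a direct check gives $(ab)(ba)=ab^2a=0$ and $(ba)(ab)=ba^2b=0$, so $ab$ and $ba$ are orthogonal and share their nonzero spectral data. For a commutative $\Q$-algebra $C$ and $\tau\in C$, the assignment $a\mapsto E_{12}$, $b\mapsto\tau E_{21}$ respects $a^2=b^2=0$ and hence extends to a ring map into $M_2(C)$ sending $ab\mapsto\tau E_{11}$ and $u,v$ to $e_{12}(1),e_{21}(\tau)$. When $ab$ is transcendental I would take $C=\Q[ab]\cong\Q[t]$ and $\tau=t$; when $ab$ is algebraic with minimal polynomial having a root $\lambda$, I would instead realise this action genuinely on a two-dimensional subquotient of an $R$-module built from an eigenvector of $ba$ for the eigenvalue $\lambda$, so that $a$ and $b$ act as $E_{12}$ and $\lambda E_{21}$ and $ab$ acts with eigenvalue $\lambda$.

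With these maps in hand the two implications follow from a single principle: if a group $H=\langle s_1,s_2\rangle$ admits a homomorphism onto a free group sending $s_1,s_2$ to a free basis, then $H$ is itself free on $s_1,s_2$, since any relation would map to a relation among free generators. Thus if some eigenvalue $\lambda$ is a free point, composing $\psi$ with the representation gives $\psi_\lambda=\Phi\circ\psi$ with $\psi_\lambda$ injective, which forces $\psi$ injective and $G$ free; the transcendental case is identical with $\lambda$ replaced by the indeterminate $t$, a free point by the classical freeness of $\langle e_{12}(1),e_{21}(t)\rangle$ over $\Q[t]$. For the converse I would prove the joint-faithfulness identity $\ker\psi=\bigcap_\lambda\ker\psi_\lambda$, the intersection being over the finitely many eigenvalues. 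If no eigenvalue is a free point, then each $\ker\psi_\lambda$ is a nontrivial normal subgroup of $F$; since in a nonabelian free group the centraliser of a nontrivial element is cyclic, two nontrivial normal subgroups cannot intersect trivially, and inductively the finite intersection $\bigcap_\lambda\ker\psi_\lambda$ is nontrivial. Hence $\ker\psi\neq 1$ and $G$ is not free, which is the contrapositive of the desired implication.

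The hard part will be exactly the identity $\ker\psi=\bigcap_\lambda\ker\psi_\lambda$, namely that the two-dimensional representations attached to the eigenvalues of $ab$ together detect every relation among $u,v$. This reduces to a structural analysis of $\Q S$: one must show that, modulo the part acting trivially on $G$, it embeds into $\prod_\lambda M_2(\Q(\lambda))$ with the $E_{12}$, $\lambda E_{21}$ action on each block, and that this loses no information about words in $u,v$. The delicate points are the possible non-semisimplicity of $ab$, where repeated roots force a passage to generalized eigenspaces and non-diagonalisable blocks, and the bookkeeping showing that the leading alternating term of a nontrivial word survives in at least one block. Once this reduction is in place, the remaining steps are the soft group-theoretic packaging above together with the cited freeness criteria for the matrix points $e_{21}(\lambda)$.
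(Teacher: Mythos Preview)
Your overall shape is right, and the forward direction (a single two-dimensional subquotient built from an eigenvector for a free eigenvalue) is correct. So is the group-theoretic fact that finitely many nontrivial normal subgroups of $F_2$ have nontrivial intersection. But the identity you stake the converse on, $\ker\psi=\bigcap_\lambda\ker\psi_\lambda$, is \emph{false}, and not for bookkeeping reasons. Take $R$ to be the upper-triangular $3\times 3$ rational matrices with $a=E_{12}$ and $b=E_{23}$: then $ab=E_{13}$ is nilpotent with sole eigenvalue $0$, so $\bigcap_\lambda\ker\psi_\lambda=\ker\psi_0$ is the kernel of the map $F_2\to\langle e_{12}(1)\rangle\cong\Z$, which contains $[x,y]$; yet $[1+a,1+b]=1+E_{13}\neq 1$, so $[x,y]\notin\ker\psi$. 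Your two-dimensional representations see only the semisimple part of $ab$; relations killed solely by the nilpotent part escape them entirely. Passing to generalised eigenspaces does not repair this, because on a block for eigenvalue $0$ you still get $b\mapsto 0$.

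The paper closes this gap with a short, decisive step you are missing. Set $A=\Q[a,b]$, finite-dimensional since $ab$ is algebraic, with Jacobson radical $J$. Filtering by powers of $J$ shows $1+J$ is a nilpotent group, so $N=G\cap(1+J)$ is a nilpotent normal subgroup of $G$. The key observation is that a free group of rank $\geq 2$ has no nontrivial nilpotent normal subgroup: such a subgroup is free by Nielsen--Schreier, hence cyclic if nilpotent, and then the cyclic-centraliser fact you already invoked forces it to be trivial. Thus $G$ is free on $1+a,1+b$ if and only if its image $\bar G$ in $A/J$ is free on the images. Now $A/J\cong\Q^m\times\prod_i M_2(\Q(\mu_i))$ with $\mu_i$ ranging over the \emph{nonzero} eigenvalues, and at this stage your joint-faithfulness identity becomes true for $\bar\psi$ in place of $\psi$; your normal-subgroup-intersection argument then finishes the converse. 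The radical step is not a convenience but the substantive bridge your proposal lacks.
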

\begin{proof}
Since we are mainly interested in group rings of finite groups in this paper, we will indicate a proof in the case $ab$ is algebraic. Without loss of generality, we may assume that $R=\Z [a,b]$, that is, $R$ is a $\Z$-module and as a ring it is generated by $\Z$ and $a$ and $b$.  Let $A =\Q [a,b]$. Let $J=J(A)$ denote the Jacobson radical of $A$. By assumption $ab$ is algebraic over $\Q$ and thus $\Q [a,b] =\Q [ab] + b\Q [ab] + \Q [ab]a + b\Q [ab] a$ is finite dimensional over $\Q$ and $J$ is a nilpotent ideal. As $(1+J^n) / (1+J^{n+1})$ is central in $(1+J)/(1+J^{n+1})$ we deduce that $1+J$ is a nilpotent group and hence so is
$(1+J)\cap \langle 1+a , 1+b\rangle$. Thus $\langle 1+a , 1+b\rangle$ is free if and only if so is $\langle 1+\overline{a} ,1+\overline{b}\rangle \subseteq \U (A/J)$.

Now, let $\rho$ denote the regular representation of $A$ over $\Q$. Let $\lambda_1 , \ldots , \lambda_k$ be the non-zero eigenvalues of $\rho (ab)$.
One then can prove that
 $$\overline{A} =A/J \cong \Q^m \oplus \prod_{i=1}^{n} M_{2}(\Q (\mu_i )),$$
 with $\{ \mu_{1} , \ldots , \mu_{n}\} =\{ \lambda_1 , \ldots , \lambda_k\}$ and the isomorphism associates $1+\overline{a}$ and $1+\overline{b}$ with
 $(1, \ldots , 1, e_{12}(1),\ldots , e_{12}(1))$ and $(1,\ldots, 1, e_{21}(\mu_1 ), \ldots , e_{21}(\mu_k ))$ respectively.
 It follows that  $\langle 1+a , 1+b\rangle$ is free if and only if each $\langle e_{12}(1), e_{21}(\mu_i )\rangle$ is a free group and thus the result follows.
\end{proof}

If $G$ is a finite  group of order $n$  and $R$ is a commutative ring then the trace function of $RG$ is the map $T:RG \rightarrow R$ associating to each element of $RG$ the coefficient of $1$, i.e. $T(\sum_{g\in G} r_g g) = r_1$. Let $\rho$ denote the regular representation given by left multiplication. Then
$T(x) =\frac{1}{n} \text{tr}(\rho (x))$, for every $x\in RG$. So, in case $R=\C$ then $T$ can be considered as the restriction of $\frac{1}{n} \text{tr}$ to $\C G$.
Salwa also proved the following

Recall that a trace function $T$ on a complex algebra $A$ is a $\C$ linear map $A\rightarrow \C$ such that $T(ab)=T(ba)$ for $a,b\in A$, $T(e)$ is a positive real number for all non-zero idempotents $e\in A$ and $T(a)=0$ for every nilpotent element $a\in A$.

\begin{proposition}
Let $A$ be a complex algebra and let $T$ be a trace function on $A$. If $a,b\in A$ are such that $a^2=b^2=0$ and $|T(ab)|\geq 2T(1)$ then
$\langle 1+a, 1+b\rangle$ is a free group.
\end{proposition}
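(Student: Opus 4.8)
The plan is to deduce this from the preceding theorem of Salwa together with Sanov's criterion (part (2) of the Proposition on $SL_2$ above), using the trace $T$ only to locate a suitable eigenvalue. By that theorem $\langle 1+a,1+b\rangle$ is free as soon as $ab$ is transcendental over $\Q$, so I may assume $ab$ is algebraic. In that case it suffices, again by that theorem, to exhibit a nonzero eigenvalue $\mu$ of $\rho(ab)$ (with $\rho$ the regular representation) which is a \emph{free point}; and by Sanov's criterion applied with $z_1=1$, $z_2=\mu$, any $\mu$ with $|\mu|\ge 4$ is a free point, since $\langle e_{12}(1),e_{21}(\mu)\rangle$ is then free. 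Thus the whole problem reduces to producing a nonzero eigenvalue of $\rho(ab)$ of modulus at least $4$.

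First I would pass to the finite-dimensional subalgebra $B=\C[a,b]$ (finite-dimensional precisely because $ab$ is algebraic, exactly as in the identity $\Q[a,b]=\Q[ab]+b\Q[ab]+\Q[ab]a+b\Q[ab]a$ in the previous proof), on which $T$ restricts to a trace function with $T(1_B)=T(1)$. Since the Jacobson radical $J=J(B)$ is nilpotent, every element of $J$ is nilpotent and hence killed by $T$, so $T$ factors through the semisimple quotient $\overline B=B/J\cong \C^{m}\oplus\prod_{i=1}^{n}M_2(\C)$, the complex analogue of the decomposition in the previous proof, in which $1+\overline a$ and $1+\overline b$ correspond to $(1,\dots,1,e_{12}(1),\dots,e_{12}(1))$ and $(1,\dots,1,e_{21}(\mu_1),\dots,e_{21}(\mu_n))$ and $\mu_1,\dots,\mu_n$ are the nonzero eigenvalues of $\rho(ab)$. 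On each simple block a trace function must be a scalar multiple of the ordinary matrix trace, because trace functions vanish on the codimension-one space of commutators; writing $c_i>0$ for the value of $T$ on a primitive idempotent of the $i$-th block and $d_j>0$ for the weights on the $\C$-factors, the positivity axiom on idempotents forces all these weights to be strictly positive reals. Computing $\overline{ab}$ block by block gives $\overline a\,\overline b=\mu_i E_{11}$ in the $i$-th block and $0$ on the $\C$-factors, whence $T(ab)=\sum_{i=1}^n c_i\mu_i$ and $T(1)=\sum_j d_j+2\sum_{i=1}^n c_i\ge 2\sum_{i=1}^n c_i$.

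The conclusion is then immediate from the hypothesis: the triangle inequality gives $\sum_i c_i|\mu_i|\ge\bigl|\sum_i c_i\mu_i\bigr|=|T(ab)|\ge 2T(1)\ge 4\sum_i c_i$, so $\sum_i c_i(|\mu_i|-4)\ge 0$ with every $c_i>0$, and therefore $|\mu_{i_0}|\ge 4$ for some $i_0$. (In particular $n\ge1$, since $|T(ab)|\ge 2T(1)>0$ rules out the case with no matrix blocks.) This $\mu_{i_0}$ is a nonzero eigenvalue of $\rho(ab)$ and a free point by Sanov, so $\langle 1+a,1+b\rangle$ is free by Salwa's theorem. I expect the only genuinely delicate points to be the bookkeeping ones: justifying that $T$ descends to $\overline B$ and is a positive combination of the block traces (this is where the positivity of $T$ on idempotents and its vanishing on nilpotents are both essential), and matching the eigenvalues $\mu_i$ that appear in the block decomposition with the eigenvalues of $\rho(ab)$ that feed into Salwa's criterion.
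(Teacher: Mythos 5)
The paper states this proposition without proof (it is quoted as a result of Salwa), so there is no in-paper argument to compare against; your proposal is correct, and it is the natural argument: it continues the paper's sketch of the preceding theorem by decomposing $\C[a,b]/J$ into $\C$-factors and $2\times 2$ blocks, reading $T$ off as a positive combination of block traces, and using $|T(ab)|\ge 2T(1)$ to force some block eigenvalue to satisfy $|\mu_{i_0}|\ge 4$, hence to be a free point by Sanov. Three details deserve attention. First, the complex block decomposition is not literally what the paper asserts (it states the rational one); you can either rerun the same argument over $\C$ or deduce it by extending scalars, since $\C[a,b]$ is a quotient of $\C\otimes_\Q\Q[a,b]$ and $J(\C\otimes_\Q\Q[a,b])=\C\otimes_\Q J(\Q[a,b])$. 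Second, positivity of the weights $c_i,d_j$ requires lifting idempotents of $B/J$ to idempotents of $B$, legitimate because $J$ is nilpotent. Third, your claim that $\mu_1,\dots,\mu_n$ \emph{are} the nonzero eigenvalues of $\rho(ab)$ is too strong: they can form a proper subset (Galois conjugates may be missing), but the inclusion you actually need --- each $\mu_i$ is an eigenvalue of $\rho(ab)$ --- does hold, since eigenvalues of the induced map on a quotient are eigenvalues of the original map. Alternatively, you can bypass Salwa's theorem and this matching issue entirely: the projection onto the block $i_0$ sends $1+a,1+b$ to $e_{12}(1),e_{21}(\mu_{i_0})$, which generate a free group freely, and a two-generated group mapping onto a free group of rank two with generators going to a basis is itself free.
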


\begin{theorem} (Marciniak-Sehgal)
Let $G$ be a finite group and let $u$ be a non-trivial bicyclic unit then $\langle u,u^{*}\rangle$ is a free group of rank 2.
\end{theorem}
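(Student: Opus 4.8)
The plan is to realize $u$ and $u^{*}$ as $1+a$ and $1+b$ with $a^{2}=b^{2}=0$ and then apply the trace-function Proposition above to the coefficient-of-identity trace $T$ on $\C G$. Write $u=b(h,\widetilde{g})=1+a$ with $a=(1-g)h\widetilde{g}$, where $n=|g|$. Applying the classical involution and using $\widetilde{g}^{*}=\widetilde{g}$, $h^{*}=h^{-1}$ and $(1-g)^{*}=1-g^{-1}$, one gets $u^{*}=1+b$ with $b=a^{*}=\widetilde{g}h^{-1}(1-g^{-1})$. That $a^{2}=0$ follows from $\widetilde{g}(1-g)=0$ and $b^{2}=0$ from $(1-g^{-1})\widetilde{g}=0$; these are exactly the identities that make the bicyclic units unipotent. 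Recall that $T(\sum r_{g}g)=r_{1}$ is a genuine trace function on $\C G$, since $T(x)=\frac{1}{|G|}\operatorname{tr}(\rho(x))$ for the regular representation $\rho$, so $T(1)=1$, $T$ vanishes on nilpotents, $T(e)>0$ on nonzero idempotents, and $T$ is conjugation invariant. Hence it suffices to prove $|T(ab)|\ge 2T(1)=2$.

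Next I compute $T(ab)$. Since $\widetilde{g}^{2}=n\widetilde{g}$, we have $ab=n(1-g)X(1-g^{-1})$ with $X=h\widetilde{g}h^{-1}=\sum_{i=0}^{n-1}hg^{i}h^{-1}$. Expanding $(1-g)X(1-g^{-1})=X-gX-Xg^{-1}+gXg^{-1}$ and using conjugation invariance of $T$, the terms $T(X)=T(gXg^{-1})=1$ (only the $i=0$ summand contributes the identity), so
$$T(ab)=n\bigl(2-T(gX)-T(Xg^{-1})\bigr),$$
where $T(gX)$ and $T(Xg^{-1})$ count the indices $i$ with $hg^{i}h^{-1}=g^{-1}$ and $hg^{i}h^{-1}=g$ respectively, each value lying in $\{0,1\}$ because $i\mapsto hg^{i}h^{-1}$ is injective.

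The crux is to show that both correction terms vanish, and this is exactly where non-triviality enters. A bicyclic unit is non-trivial precisely when $h\notin N_{G}(\langle g\rangle)$. If $hg^{i}h^{-1}=g$ or $g^{-1}$ for some $i$, then $g^{i}$ has order $n$, hence generates $\langle g\rangle$, so $h\langle g\rangle h^{-1}=\langle hg^{i}h^{-1}\rangle=\langle g\rangle$, forcing $h\in N_{G}(\langle g\rangle)$, a contradiction. Therefore $T(gX)=T(Xg^{-1})=0$ and $T(ab)=2n\ge 4>2$, so the Proposition yields that $\langle u,u^{*}\rangle$ is free. For the rank, I would invoke the finer structure in the proof of Salwa's theorem: $\langle 1+a,1+b\rangle$ projects onto a two-by-two Wedderburn block of the form $\langle e_{12}(1),e_{21}(\mu)\rangle$, a Sanov group that is free of rank $2$; since $\langle u,u^{*}\rangle$ is free and surjects onto a rank-$2$ free group, while being generated by two elements, its rank is exactly $2$.

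The main obstacle is entirely concentrated in the one step above: turning the normalizer hypothesis into the \emph{sharp} value $T(ab)=2n$. The potential failure mode is that the two correction terms could both equal $1$ and cancel the main term to give $T(ab)=0$, in which case the Proposition would not apply; the order argument is precisely what excludes this, and it is the only place where $h\notin N_{G}(\langle g\rangle)$ is used. A secondary subtlety, handled by the projection onto a matrix block, is upgrading the Proposition's conclusion of \emph{freeness} to freeness of \emph{rank exactly two}.
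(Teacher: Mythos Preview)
Your proof is correct and follows the same strategy as the paper: write $u=1+a$, $u^{*}=1+b$ with $a^{2}=b^{2}=0$, and apply the trace Proposition to the coefficient-of-identity trace on $\C G$. The only difference is a computational shortcut: the paper evaluates $T(ba)$ rather than $T(ab)$. With your notation this gives
\[
ba=\widetilde{g}\,h^{-1}(1-g^{-1})(1-g)h\,\widetilde{g}=\widetilde{g}\,(2-z-z^{-1})\,\widetilde{g},\qquad z=h^{-1}gh,
\]
and since non-triviality means $h\notin N_{G}(\langle g\rangle)$, one has $z\notin\langle g\rangle$ directly (as $|z|=|g|$), whence $T(ba)=2n$ in one line via $T(\widetilde{g}z\widetilde{g})=nT(z\widetilde{g})=0$. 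Your expansion of $ab$ into four terms and the order argument for $T(gX)=T(Xg^{-1})=0$ reach the same value $2n$, just with slightly more work. Your added remark securing rank exactly two is more careful than the paper, which simply quotes the Proposition; in context the Proposition is understood to assert that $1+a$ and $1+b$ are free generators.
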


\begin{proof}
Let $T$ be the above mentioned trace map om $\C G$ and let $u=b(g,\widetilde{h})\neq 1$ with $g,h\in G$. Let $a=u-1=(1-h)g\widetilde{h}$ and $b=a^{*}=\widetilde{h}g^{-1}(1-h^{-1})$. Then $ba =\widetilde{h} g^{-1} (2-h-h^{-1})g\widetilde{h} = \widetilde{h} (2-z-z^{-1})\widetilde{h}$, with $z\not\in \langle h \rangle$. Therefore, $T(ab)=T(ba) =2|h|\geq 4 =4T(1)$. Hence, $\langle u =1+1, u^{*}=1+b\rangle$ is free by the previous Proposition. This proves the result for $u=b(g,\widetilde{h})\neq 1$. A similar argument deals with $u=b(\widetilde{h},g)$.
\end{proof}

\section{ Primitive central idempotents and simple components}

In this section we discuss the decomposition of the semisimple rational group algebra $\Q G$ of a finite group into a product of
simple components, the so called Wedderburn components. 
We begin by recalling the fundamental theorem describing semisimple rings.

\begin{theorem} (Wedderburn-Artin)
A ring $R$ is semisimple if and only if $R$ is isomorphic to a finite direct product of matrix rings over division rings.
\end{theorem}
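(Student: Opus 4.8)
The plan is to prove the two implications separately, throughout treating $R$ as a left module over itself and recalling that $R$ is \emph{semisimple} precisely when ${}_R R$ is a direct sum of simple submodules (minimal left ideals). The easy direction is the implication from the product form to semisimplicity, and the substance of the theorem lies in the converse.

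For the easy direction, I would first show that a single matrix ring $M_n(D)$ over a division ring $D$ is semisimple: the column space $D^n$ is a simple left $M_n(D)$-module, since any nonzero vector can be carried to any other by left multiplication, and $M_n(D)=\bigoplus_{j=1}^n C_j$ decomposes as the direct sum of its $n$ columns $C_j$, each isomorphic to $D^n$ as a left module. Hence $M_n(D)$ is semisimple. Then, for a finite product $R=\prod_k M_{n_k}(D_k)$, as a left $R$-module each factor is semisimple (with $R$ acting through the projection), and ${}_R R$ is their direct sum, so $R$ is semisimple.

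For the converse, suppose $R$ is semisimple and write ${}_R R=\bigoplus_i L_i$ as a direct sum of minimal left ideals. Expanding $1=\sum_i e_i$ with $e_i\in L_i$, only finitely many $e_i$ are nonzero, and since these generate $R$ the decomposition is finite: $R=L_1\oplus\cdots\oplus L_m$. Next I would collect the $L_i$ into isomorphism classes, letting $B_k$ be the sum of all minimal left ideals isomorphic to a fixed simple module $S_k$, for $k=1,\ldots,r$ the distinct types. The key structural step is to verify that each isotypic component $B_k$ is a \emph{two-sided} ideal: right multiplication $l\mapsto lx$ on a minimal left ideal $L\cong S_k$ is a left-module homomorphism, so its image $Lx$ is a simple quotient of $L$, hence either $0$ or again a copy of $S_k$, keeping us inside $B_k$. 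This yields $R=B_1\oplus\cdots\oplus B_r$ as two-sided ideals, and expanding $1=f_1+\cdots+f_r$ with $f_k\in B_k$ gives orthogonal central idempotents, so $R\cong B_1\times\cdots\times B_r$ as rings, each $B_k$ a semisimple ring with identity $f_k$ all of whose minimal left ideals are isomorphic.

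It then remains to identify each isotypic piece. For a ring $B$ with ${}_B B\cong S^{\oplus n}$, $S$ simple, I would combine two facts: by Schur's Lemma $D:=\End_B(S)$ is a division ring, and the endomorphism ring computed by right multiplications satisfies $\End_B({}_B B)\cong B^{op}$, while on the other hand $\End_B(S^{\oplus n})\cong M_n(D)$. Hence $B^{op}\cong M_n(D)$, so $B\cong M_n(D)^{op}\cong M_n(D^{op})$ with $D^{op}$ again a division ring. Applying this to each $B_k$ and reassembling gives $R\cong\prod_{k=1}^r M_{n_k}(D_k)$. The main obstacle is the careful bookkeeping of left versus right actions in the identification $\End_B({}_B B)\cong B^{op}$ together with the passage $M_n(D)^{op}\cong M_n(D^{op})$; getting the opposite rings right is where the argument is most easily mishandled, whereas the reduction to the isotypic case via two-sided ideals is conceptually the heart of the proof.
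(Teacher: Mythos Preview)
Your proof is correct and is the standard textbook argument for the Wedderburn--Artin theorem. However, there is nothing to compare: the paper does not prove this statement at all. It is quoted as a classical structural result and used as background for the subsequent discussion of the Wedderburn decomposition of $\Q G$; no proof (or even sketch) is given in the paper.
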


So, if $R$ is a semisimple algebra then
 $$R=Re_1 \times \cdots \times Re_k \cong M_{n_1}(D_1) \times \cdots \times M_{n_{k}}(D_{k}),$$
 where $n_1, \ldots , n_k$ are positive integers, each $D_i$ is a division ring and each $e_i$ is a primitive central idempotent.

\begin{theorem} (Maschke's Theorem)
Let $R$ be a ring and $G$ a group. The group ring $RG$ is semisimple if and only if $R$ is semisimple, $G$ is finite and $|G|$ is invertible in $R$ (i.e. $|G|r=1$ for some $r\in R$). In case $R$ is a field, the latter means that $|G|$ is not a multiple of the characteristic of $R$
\end{theorem}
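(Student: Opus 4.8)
The plan is to prove both implications, reading the statement as a generalized Maschke theorem. Throughout I will use the standard module-theoretic characterization of semisimplicity: a ring $S$ is semisimple if and only if every short exact sequence of left $S$-modules splits, equivalently every submodule of every left $S$-module is a direct summand. I assume $R\neq 0$.

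For the implication ($\Leftarrow$), suppose $R$ is semisimple, $G$ is finite and $n=|G|$ is invertible in $R$. I would show that every $RG$-submodule $N$ of an $RG$-module $M$ is an $RG$-direct summand. Restricting scalars to $R$ and using that $R$ is semisimple, there is an $R$-linear projection $\pi:M\to N$ with $\pi|_N=\id_N$. The key step is to average $\pi$ over $G$ to repair equivariance: set
$$\bar\pi(m)=\frac{1}{n}\sum_{g\in G} g\,\pi(g^{-1}m),$$
which is well defined because $n$ is invertible in $R$ and $G$ is finite. A routine verification shows that $\bar\pi$ is $RG$-linear, has image in $N$ (as $N$ is $RG$-stable), and satisfies $\bar\pi|_N=\id_N$. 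Hence $N$ is a direct summand and $RG$ is semisimple. All three hypotheses enter exactly once: semisimplicity of $R$ to produce $\pi$, finiteness of $G$ to make the sum finite, and invertibility of $n$ to normalize it.

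For the implication ($\Rightarrow$), suppose $RG$ is semisimple. First, $R$ is semisimple because $\text{aug}:RG\to R$ is a surjective ring homomorphism, so $R\cong RG/\ker(\text{aug})$ is a quotient of a semisimple ring and thus semisimple. The interesting part is extracting finiteness of $G$ and invertibility of $|G|$. I would consider the short exact sequence of left $RG$-modules $0\to\ker(\text{aug})\to RG\xrightarrow{\text{aug}} R\to 0$, where $R$ carries the $RG$-action through $\text{aug}$. Since $RG$ is semisimple this sequence splits, so there is an $RG$-linear section $s:R\to RG$ with $\text{aug}\circ s=\id_R$. Put $f=s(1)\in RG$. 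Then $RG$-linearity of $s$ forces $gf=f$ for every $g\in G$ (because $g$ acts on $R$ as multiplication by $\text{aug}(g)=1$), and $\text{aug}(f)=1$.

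Writing $f=\sum_{x\in G}c_x x$ with finite support, the relation $gf=f$ yields $c_{g^{-1}y}=c_y$ for all $g,y\in G$, so all coefficients coincide with a single value $c\in R$. Since $\text{aug}(f)=1\neq 0$ we must have $c\neq 0$; but a nonzero constant coefficient forces the (left-translation invariant, hence transitive) support to be all of $G$, and as $f$ has finite support this shows $G$ is finite and $f=c\,\widetilde{G}$. Applying $\text{aug}$ gives $|G|\,c=1$, so $|G|$ is invertible in $R$ with inverse $c$. This closes the equivalence; the field case is then immediate, since a field is semisimple and $|G|$ is invertible in a field precisely when its characteristic does not divide $|G|$. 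I expect the forward finiteness argument — the passage from a splitting section to the conclusion that $\supp(f)=G$ and hence $G$ is finite — to be the main obstacle, whereas the reverse direction is the classical averaging argument.
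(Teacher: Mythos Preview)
The paper is a survey and states Maschke's Theorem without proof, so there is no argument in the paper to compare against. Your proof is correct and is the standard one: the averaging trick for the backward direction, and for the forward direction the observation that a splitting of the augmentation sequence produces a $G$-invariant element $f\in RG$ with $\text{aug}(f)=1$, whose constant coefficient function forces $G$ to be finite and $|G|$ invertible. One small expository point: your sentence ``Since $\text{aug}(f)=1\neq 0$ we must have $c\neq 0$'' is really the contrapositive ``if $c=0$ then $f=0$ and $\text{aug}(f)=0$''; stating it that way makes the passage from constant coefficients to finiteness of $G$ cleaner.
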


If $F$ is a field and $G$ is a finite group such that $FG$ is semisimple then
 $$FG=FGe_1 \times \cdots \times FGe_k\cong M_{n_1}(D_1) \times \cdots \times M_{n_{k}}(D_{k}),$$
 and each simple algebra $FGe_i$ is as an $F$-algebra generated by the finite group $Ge_i =\{ ge_i \mid g\in G\}$.
 Clearly, $Ge_i \cong G/S_{G} (e_i)$, where $S_{G}(e_i)=\{ g\in G \mid ge_i =e_i\}$, the stabiliser of $e_i$ in $G$.
 In case $G$ is abelian then, of course, each $n_i=1$ and $D_i$ is a field. Since finite subgroups of a field are cyclic, we get that, in this case
 each $FG e_i = F(\xi_{n_i})$, where $\xi_{n_i}$ is a primitive $n_i$-th root of unity in the algebraic closure of $F$.
 One can then prove the following result.
 
 \begin{theorem} (Perlis-Walker)
 Let $G$ be a finite abelian group and $F$ a field of characteristic $0$. Let $k_d$ denote the number of cyclic subgroups of $G$ of order $d$. Then
   $$FG \cong \prod_{d,\; d| |G|} F(\xi_d)^{k_d \frac{[\Q (\xi_d):\Q]}{[F(\xi_d):F]}}.$$ 
In particular,
  $$\Q G\cong  \prod_{d,\; d| |G|} \Q(\xi_d)^{k_d }.$$
 \end{theorem}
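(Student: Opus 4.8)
The plan is to use the fact that, since $\operatorname{char} F = 0$, Maschke's Theorem guarantees that $FG$ is semisimple, and since $G$ is abelian $FG$ is commutative, so by Wedderburn--Artin it is a finite product of finite field extensions of $F$. Each simple component is generated over $F$ by the images of the elements of $G$, which are roots of unity; hence every component is isomorphic to $F(\xi_d)$ for some $d$ dividing the exponent $e$ of $G$. The entire content of the theorem is therefore the determination of how many components are isomorphic to each $F(\xi_d)$, and my strategy is to obtain this multiplicity by a Galois-descent argument over the splitting field $K = F(\xi_e)$.

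First I would pass to $K$. Since $K$ contains a primitive $e$-th root of unity, $KG$ splits completely: $KG \cong \prod_{\chi \in \widehat{G}} K$, where $\widehat{G} = \operatorname{Hom}(G, K^\times)$ is the character group, of cardinality $|G|$, and the idempotent attached to $\chi$ is the usual $\frac{1}{|G|}\sum_{g\in G} \chi(g)^{-1} g$. The Galois group $\Gamma = \operatorname{Gal}(K/F)$ acts $K$-semilinearly on $KG = FG \otimes_F K$ with fixed ring $FG$, and acts on the index set by $(\sigma\chi)(g) = \sigma(\chi(g))$. Galois descent for commutative separable algebras then identifies the simple components of $FG$ with the $\Gamma$-orbits on $\widehat{G}$: the component corresponding to an orbit $O$ is the orbit sum of the primitive idempotents $e_\psi$ over $\psi \in O$, and its underlying field is the fixed field of the stabilizer of any $\chi \in O$, namely $F(\chi(G))$.

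Next I would identify this field and the orbit size. If $\chi$ has order $d$ in $\widehat{G}$, then $\chi(G)$ is the cyclic group $\langle \xi_d \rangle$, so the component is $F(\xi_d)$; and the stabilizer of $\chi$ in $\Gamma$ is exactly $\operatorname{Gal}(K/F(\xi_d))$, whence the $\Gamma$-orbit of $\chi$ has size $[F(\xi_d):F]$. It then remains to count the characters of each order. Because $\widehat{G} \cong G$, the number of characters of order $d$ equals the number of elements of order $d$ in $G$; and since each cyclic subgroup of order $d$ has exactly $\varphi(d)$ generators while distinct such subgroups share none, this number is $k_d \varphi(d)$. Dividing by the common orbit size gives $k_d \varphi(d) / [F(\xi_d):F]$ components isomorphic to $F(\xi_d)$, and $\varphi(d) = [\Q(\xi_d):\Q]$ yields the stated formula. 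The case $F = \Q$ is then immediate, since there $[F(\xi_d):F] = [\Q(\xi_d):\Q]$.

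The main obstacle is making the orbit--component correspondence precise: one must verify that taking $\Gamma$-invariants of the completely split algebra $KG$ genuinely recovers $FG$, with its primitive idempotents being the orbit sums, and that the field attached to an orbit is $F(\chi(G))$. Concretely this reduces to checking that each simple component $F(\xi_d)$ of $FG$ satisfies $F(\xi_d) \otimes_F K \cong K^{[F(\xi_d):F]}$, which holds because $K/F$ is Galois and contains $F(\xi_d)$, so that $F(\xi_d)$ contributes precisely $[F(\xi_d):F]$ of the split factors of $KG$, exactly matching the orbit size. Everything else, including the count of elements of a given order in a finite abelian group, is routine.
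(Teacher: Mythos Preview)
Your argument is correct. The orbit--component correspondence you invoke is standard Galois descent for separable algebras, and your bookkeeping is sound: the $\Gamma$-action preserves the order of a character (since $\sigma$ permutes roots of unity of a given order), so the set of characters of order exactly $d$ is a union of orbits, each of size $[F(\xi_d):F]$, and the count $k_d\varphi(d)$ for elements of order $d$ in $\widehat{G}\cong G$ is just the partition of those elements by the cyclic subgroup they generate.

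The paper does not actually give a detailed proof of Perlis--Walker. It only records the preliminary observation that for abelian $G$ each simple component $FGe_i$ is generated over $F$ by the cyclic group $Ge_i\cong G/S_G(e_i)$, hence equals some $F(\xi_d)$, and then says ``One can then prove the following result.'' The subsequent Lemma and Corollary carry this out for $F=\Q$ by showing that the primitive central idempotents of $\Q G$ are exactly the $\varepsilon(G,N)$ with $G/N$ cyclic, so that the components of $\Q G$ are parametrized by cyclic \emph{quotients} of $G$; the multiplicity $k_d$ then comes from the duality between cyclic quotients and cyclic subgroups of a finite abelian group. Your route is the dual one: you parametrize components by Galois orbits of \emph{characters} rather than by kernels $S_G(e)$. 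The paper's approach makes the idempotents $\varepsilon(G,N)$ completely explicit (which it needs later), but it only treats $F=\Q$ cleanly; your descent argument handles arbitrary $F$ of characteristic $0$ uniformly and gives the multiplicity $k_d\,[\Q(\xi_d):\Q]/[F(\xi_d):F]$ in one stroke, at the cost of leaving the actual idempotents implicit.
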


One can also compute the primitive central idempotents of a rational group algebra of a finite abelian group. To do so, we introduce some notation.

Let $G$ be a finite group and $N$ a normal subgroup of $G$. Let $F$ be a field whose characteristic does not divide $|G|$. In $FG$ consider the elements
  $$\varepsilon (G,N)=\left\{  \begin{array}{ll}
                                          \widehat{G}  & \mbox{if } G=N\\
                                          \prod_{D/N\in M(G/N)} (\widehat{N}-\widehat{D}), & \mbox{otherwise}
                                      \end{array}\right.$$
Here $M(G/N)$ denotes the set of the minimal non-trivial normal subgroups $D/N$ of $G/N$, with $D$ a subgroup of $G$ containing $N$.
It easily is verified that $\varepsilon (G,N)$ is a central idempotent of $FG$.

\begin{lemma}
If $e$ is a primitive central idempotent of $\Q G$ such that $\Q G e$ is a field then
$e=\varepsilon (G,N)$ where $N=S_{G}(e)$ and $\Q Ge=\Q (\xi_d)$, where $d=|G/N|$.
\end{lemma}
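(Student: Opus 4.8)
The plan is to first read off the field-theoretic consequences, then reduce the idempotent identity $e=\varepsilon(G,N)$ to the cyclic quotient $G/N$, and finally settle that cyclic case with linear characters.

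First I would exploit that $\Q G e$ is a field, hence commutative. The assignment $g\mapsto ge$ is a group homomorphism $G\to \U(\Q G e)$ whose kernel is exactly $N=S_G(e)$, so its image $Ge$ is a finite subgroup of the multiplicative group of a field, hence cyclic, and $Ge\cong G/N$ has order $d:=|G/N|$. A generator of $Ge$ is an element of order $d$ in a field of characteristic $0$, i.e.\ a primitive $d$-th root of unity $\xi_d$; since $\Q G e$ is generated over $\Q$ by $Ge$, this already gives $\Q G e=\Q(\xi_d)$, which is the second assertion. (Normality of $N$ follows from centrality of $e$.)

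Next I would reduce $e=\varepsilon(G,N)$ to the quotient $\bar G:=G/N$. Because each $n\in N$ fixes $e$, we have $\widehat N e=e$, so $e\in \Q G\widehat N=\Q(G/N)$; concretely $\text{aug}_{G,N}$ restricts to an isomorphism $\Q G\widehat N\to \Q\bar G$ sending $\widehat N\mapsto 1$ and $\widehat D\mapsto \widehat{D/N}$ for $N\le D\le G$. Under it $e$ corresponds to a primitive idempotent $\bar e$ of $\Q\bar G$ and $\varepsilon(G,N)$ corresponds to $\varepsilon(\bar G,1)=\prod_{\bar D\in M(\bar G)}(1-\widehat{\bar D})$. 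Moreover $S_{\bar G}(\bar e)=1$, since $gN$ fixing $\bar e$ forces $ge=e$, i.e.\ $g\in N$. Hence it suffices to prove $\bar e=\varepsilon(\bar G,1)$ for the cyclic group $\bar G$ of order $d$ whose idempotent has trivial stabilizer.

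For the cyclic case I would pass to $\C$ and use the linear characters $\chi$ of $\bar G$, whose primitive idempotents $e_\chi$ satisfy $\widehat{\bar D}\,e_\chi=e_\chi$ exactly when $\bar D\le\ker\chi$, so that $S_{\bar G}(e_\chi)=\ker\chi$. The minimal subgroups of $\bar G$ are precisely the prime-order subgroups $P_p$ for $p\mid d$, and $\ker\chi$ contains no minimal subgroup iff $\chi$ is nontrivial on every $P_p$ iff $\chi$ is faithful. Thus $\prod_{p\mid d}(1-\widehat{P_p})\,e_\chi$ equals $e_\chi$ for faithful $\chi$ and $0$ otherwise, whence $\varepsilon(\bar G,1)=\sum_{\chi\ \mathrm{faithful}}e_\chi$. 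The faithful characters $x\mapsto\xi_d^{\,j}$ with $\gcd(j,d)=1$ form a single orbit under $\operatorname{Gal}(\Q(\xi_d)/\Q)$, so this sum is one primitive idempotent of $\Q\bar G$ with component $\Q(\xi_d)$ and trivial stabilizer; being the unique such idempotent, it must equal $\bar e$. I expect the main obstacle to lie precisely in this last step: correctly matching minimal subgroups with prime-order subgroups and controlling the Galois descent, so that the $\C$-level sum genuinely descends to a single primitive idempotent over $\Q$.
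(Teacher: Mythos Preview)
The paper is a survey and does not include a proof of this lemma; it is simply stated and then used to deduce the corollary on primitive central idempotents of $\Q G$ for abelian $G$. Your argument is correct and is exactly the standard route one would take: identify $Ge\cong G/N$ as a cyclic group inside the field $\Q Ge$ to get $\Q Ge=\Q(\xi_d)$, push through the isomorphism $\Q G\widehat N\cong \Q(G/N)$ to reduce to the case $N=1$ with $G$ cyclic, and then compute $\varepsilon(G,1)$ character-by-character over $\C$ and descend via the Galois action. Each step checks out; in particular your observation that $(1-\widehat{\bar D})e_\chi$ vanishes precisely when $\bar D\le\ker\chi$, and that the faithful linear characters of a cyclic group of order $d$ form a single $\operatorname{Gal}(\Q(\xi_d)/\Q)$-orbit, is exactly what makes the descent work. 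The uniqueness at the end is also fine: for a cyclic group the stabilizer of a rational primitive idempotent equals the common kernel of the characters in its Galois orbit, so trivial stabilizer forces the orbit of faithful characters.
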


\begin{corollary}
Let $G$ be a finite abelian group. The primitive central idempotents of $\Q G$ are the elements $\varepsilon (G,N)$ with $N$ a subgroup of $G$ such that $G/N$ is a cyclic group.
\end{corollary}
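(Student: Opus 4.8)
The plan is to prove the two inclusions separately, using the preceding Lemma in both directions and exploiting that $G$ abelian forces $\Q G$ to be commutative.

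\textbf{Every primitive central idempotent has the stated form.} Since $G$ is abelian, $\Q G$ is commutative, so for a primitive central idempotent $e$ the Wedderburn component $\Q G e$ is a commutative simple Artinian ring, that is, a field. Hence the hypothesis of the Lemma is automatically met for \emph{every} primitive central idempotent $e$, and the Lemma gives $e=\varepsilon(G,N)$ with $N=S_G(e)$ and $\Q G e\cong\Q(\xi_d)$, where $d=|G/N|$. It then remains to see that $G/N$ is cyclic, and this is immediate: the map $G\to\U(\Q Ge)$, $g\mapsto ge$, has kernel $S_G(e)=N$, so $G/N$ embeds as a finite subgroup of the multiplicative group of the field $\Q Ge$, and finite subgroups of the multiplicative group of a field are cyclic.

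\textbf{Every such $\varepsilon(G,N)$ is a primitive central idempotent.} Fix $N\le G$ with $G/N$ cyclic of order $d$. First I would check that $\varepsilon(G,N)\in\Q G\widehat{N}$: for each $D\supseteq N$ one has $\widehat{D}\widehat{N}=\widehat{D}$, whence $(\widehat{N}-\widehat{D})\widehat{N}=\widehat{N}-\widehat{D}$ and therefore $\varepsilon(G,N)\widehat{N}=\varepsilon(G,N)$. Next I would pass through the ring isomorphism $\Q G\widehat{N}\cong\Q(G/N)$ recorded earlier (sending $\widehat{N}\mapsto 1$ and $g\widehat{N}\mapsto gN$); under it $\widehat{D}\mapsto\widehat{D/N}$, and the minimal non-trivial normal subgroups $D/N$ of $G/N$ are exactly the members of $M(G/N)$, so $\varepsilon(G,N)\mapsto\prod_{D/N\in M(G/N)}(1-\widehat{D/N})=\varepsilon(G/N,1)$, where $1$ denotes the trivial subgroup. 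Applying the Lemma to the cyclic group $G/N$, whose faithful component is the field $\Q(\xi_d)$ with trivial stabiliser, shows that $\varepsilon(G/N,1)$ is a primitive central idempotent of $\Q(G/N)$ with component $\Q(\xi_d)$. Transporting back, $\varepsilon(G,N)$ is a primitive central idempotent of the ring direct factor $\Q G\widehat{N}$ of $\Q G$, and a primitive central idempotent of a direct factor is primitive in the whole ring $\Q G=\Q G\widehat{N}\oplus\Q G(1-\widehat{N})$. Combining the two directions gives exactly the claimed description.

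\textbf{Main obstacle.} The delicate step is the sufficiency argument: one must verify cleanly that $\varepsilon(G,N)$ lands in the summand $\Q G\widehat{N}$, that it corresponds to $\varepsilon(G/N,1)$ under the identification $\Q G\widehat{N}\cong\Q(G/N)$, and that primitivity is genuinely preserved when a component's idempotent is viewed inside the ambient algebra. A parallel route to completeness, which sidesteps part of this, is a counting argument: by Perlis--Walker the number of simple components of $\Q G$ equals the total number of cyclic subgroups of $G$, which by the self-duality of finite abelian groups equals the number of subgroups $N$ with $G/N$ cyclic; since the first direction already places every primitive central idempotent among the $\varepsilon(G,N)$ with $G/N$ cyclic, and distinct such $N$ yield distinct idempotents because $N=S_G(\varepsilon(G,N))$, a comparison of cardinalities forces the two sets to coincide.
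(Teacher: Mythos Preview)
Your argument is correct and is precisely the natural derivation the paper has in mind: the Corollary is stated without proof as an immediate consequence of the preceding Lemma, and your proof supplies exactly that---using the Lemma (together with commutativity of $\Q G$) for the forward direction, and reducing to the faithful component of the cyclic quotient $\Q(G/N)$ for the converse. The alternative Perlis--Walker counting you sketch is also sound and is in the same spirit as the paper's surrounding discussion.
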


Note that primitive central idempotents of a complex group $\C G$ of a finite group also are well known.
Indeed, denote by $\mbox{Irr}(G)$ the set of the irreducible complex characters of $G$. If $\chi \in \mbox{Irr}(G)$ then 
 $$e (\chi) =\frac{\chi (1)}{|G|} \sum_{g\in G} \chi (g^{-1}) g$$
 is a primitive central idempotent of $\C G$. Moreover, it is the unique primitive central idempotent $e\in \C G$ such that $\chi(e) \neq 0$. 
 One can replace, in the above, the field $\C$ by any splitting field $F$ of $G$, that is, $F$ is a field such that $FG=\prod_i M_{n_i}(F)$.
 The Brauer splitting theorem states that $\Q (\xi_{|G|})$ is a splitting field of $G$ (where $\xi_{|G|}$ is a primitve $|G|$-th root of unity). More generally, it says that if $F$ is a field and $FG$ is semisimple
 then $F(\xi_{|G|})$ is a splitting field of $G$.
 
 If $FG$ is not necessarily split, then it much more complicated to describe the primitive central idempotents of $FG$. In theory one can determine the primitive central idempotents of $FG$, via Galois-descent, from the primitive central idempotents of $F(\xi_{|G|})G$. However, this does not necessarily result in
 some nice generic formulas.
Nevertheless, for some classes of groups one can obtain nice descriptions in terms of the subgroups of $G$. The class includes the abelian-by-supersolvable groups. We will explain such  formulas for $\Q G$.

We need to introduce some terminology and notation.

\begin{proposition}
Let $G$ be  a finite group and $H$ and $K$ subgroups of $G$ such that $K \subseteq H$. Then, $\text{Lin}(H,K)=\{ \chi \mid \chi \text{ a linear complex character with } \ker (\chi ) =K\}\neq \emptyset$ if and only if the following conditions hold
\begin{enumerate}
\item[(S1)] $K\lhd H$,
\item[(S2)] $H/K$ is cyclic.
\end{enumerate}
Assume that (S1) and (S2) hold for $\chi \in \text{Lin}(H,K)$. Then $\chi^{G}$ is absolutely irreducible if and only if $(H,K)$ satisfies the following condition:\\
(S3) for every $g\in G\setminus H$ there exists $h\in H$ such that $(h,g)\in H\setminus K$.

A Shoda pair of a finite group $G$ is a pair $(H,K)$ of subgroups of $G$ satisfying conditions (S1), (S2) and (S3).
\end{proposition}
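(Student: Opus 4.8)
The plan splits along the two claims. For the first equivalence I would argue purely with elementary character theory. If $\chi \in \mathrm{Lin}(H,K)$, then $\chi \colon H \to \C^{\times}$ is a group homomorphism, so its kernel $K$ is normal in $H$, giving (S1); and $H/K \cong \chi(H)$ is a finite subgroup of $\C^{\times}$, hence cyclic, giving (S2). Conversely, if (S1) and (S2) hold, write $H/K = \langle hK \rangle$ with $d = |H/K|$, fix a primitive $d$-th root of unity $\zeta$, and define $\chi$ as the composite $H \to H/K \to \C^{\times}$ sending $hK \mapsto \zeta$. This is linear and its kernel is exactly $K$, so $\mathrm{Lin}(H,K) \neq \emptyset$.

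For the second claim, first I would note that $\chi^{G}$ is an ordinary complex character, and over the algebraically closed field $\C$ irreducibility and absolute irreducibility coincide, so it suffices to decide when $\chi^{G}$ is irreducible. The engine is the standard Mackey inner-product formula together with Frobenius reciprocity: writing $H^{g} = g^{-1}Hg$ and $\chi^{g}(x) = \chi(gxg^{-1})$, one has
\[
\langle \chi^{G}, \chi^{G}\rangle_{G} = \sum_{g} \langle (\chi^{g})|_{H \cap H^{g}}, \, \chi|_{H\cap H^{g}}\rangle_{H\cap H^{g}},
\]
the sum running over a set of representatives of the double cosets $H\backslash G/H$. Since $\chi^{G}$ is a genuine character, it is irreducible iff $\langle \chi^{G}, \chi^{G}\rangle_{G} = 1$. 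The double coset of the identity contributes $\langle \chi, \chi\rangle_{H} = 1$, and every remaining summand is a non-negative integer; hence irreducibility is equivalent to the vanishing of all summands indexed by $g \notin H$.

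For a linear $\chi$ both restrictions $(\chi^{g})|_{H\cap H^{g}}$ and $\chi|_{H\cap H^{g}}$ are one-dimensional, so their inner product is $1$ if they agree and $0$ otherwise. Thus $\chi^{G}$ is irreducible iff for every $g \notin H$ the linear characters $\chi^{g}$ and $\chi$ differ somewhere on $H \cap H^{g}$. The last and most delicate step is to rewrite this disagreement as (S3). For $h \in H \cap H^{g}$ one has $\chi^{g}(h) \neq \chi(h)$ iff $\chi(ghg^{-1}h^{-1}) \neq 1$, i.e. iff the commutator $(h,g)$ lies in $H \setminus K$; note that $ghg^{-1}h^{-1} \in H$ precisely because $h \in H \cap H^{g}$, and that $H\setminus K$ is closed under inversion since $K \leq H$, so the exact commutator convention is immaterial. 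Hence the summand for $g$ vanishes iff there exists $h \in H$ with $(h,g) \in H \setminus K$, which is exactly condition (S3) for that $g$.

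Finally I would check that this condition depends only on the double coset $HgH$ and is stable under $g \mapsto g^{-1}$, so that quantifying over double coset representatives is the same as quantifying over all $g \in G \setminus H$ as stated in (S3). The main obstacle is precisely this bookkeeping: verifying that the Mackey condition, naturally phrased on $H \cap H^{g}$, matches the commutator formulation (S3) cleanly and is a genuine double-coset invariant, so that the two quantifier ranges coincide; the character-theoretic engine itself (Frobenius reciprocity and Mackey's formula) is standard and can be invoked directly.
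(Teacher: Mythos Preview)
Your argument is correct. For the first equivalence you use exactly the fact the paper invokes (a finite subgroup of $\C^{\times}$, hence of a field, is cyclic), and your converse construction is the obvious one. For the second part the paper gives no argument at all beyond attributing the result to Shoda; your route via the Mackey decomposition of $\langle \chi^{G},\chi^{G}\rangle_{G}$ and Frobenius reciprocity is precisely the standard proof of Shoda's criterion, so you are supplying what the paper only cites. The bookkeeping you flag --- replacing $h\in H\cap H^{g}$ by $h\in H$ because the requirement $ghg^{-1}h^{-1}\in H$ already forces $h\in H^{g}$, and the invariance under $g\mapsto g^{-1}$ and under the double coset $HgH$ --- is exactly the point that needs checking, and your sketch handles it correctly.
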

\begin{proof}
The first part follows from the fact that every finite subbgroup of a field is cyclic. The second part is due to Shoda. 
\end{proof}

\begin{theorem} (Olivieri, del R\'io, Sim\'on)
If $(H,K)$ is a Shoda pair of a finite subgroup $G$ and $\chi \in \text{Lin}(H,K)$ then $\chi^{G}$ is an absolutely irreducible character and 
there is a unique primitve central idempotent $e$ of $\Q G$, denoted, $e_{\Q}(\chi)$, such that $\chi^{G}  (e)\neq 0$. Furthermore,
$$
e_{\Q}(\chi^{G}) =\frac{[\text{Cen}_{G}(\varepsilon(H,K)):H]}{[\Q (\chi):\Q(\chi^{G})]} e(G,H,K),
$$
where 
$$e(G,H,K) =\sum_{t\in T} \varepsilon (H,K)^{t}$$
and $T$ is a right transversal of $\text{Cen}_{G}(\varepsilon(H,K))$ in $G$.
The unique Wedderburn component containing $e(G,H,K)$ is $\Q G e(G,H,K)$, it will be   denoted $A_{\Q}(G,H,K)$.
\end{theorem}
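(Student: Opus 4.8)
The plan is to reduce everything to the complex group algebra, where primitive central idempotents are explicit, and then to descend to $\Q$ by a Galois averaging argument. Throughout write $C=\text{Cen}_G(\varepsilon(H,K))$ and $d=[H:K]$, and recall that the earlier Proposition (Shoda's theorem) already guarantees that $\chi^{G}$ is absolutely irreducible; hence $\chi^{G}$ determines a single Wedderburn component of $\C G$ and, by taking its Galois orbit, a single component of $\Q G$, so that $e_{\Q}(\chi^{G})$ exists and is unique by the standard theory of primitive central idempotents recalled above.

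First I would record the local identity $\varepsilon(H,K)=\sum_{\tau}e_{\C}(\chi^{\tau})$, where $\tau$ ranges over the linear characters of $H$ with kernel exactly $K$ (equivalently, over the $\text{Gal}(\Q(\xi_d)/\Q)$-conjugates of $\chi$). This follows by projecting $\varepsilon(H,K)$ along $\Q H\to\Q(H/K)$: since $H/K$ is cyclic of order $d$, the image is the faithful primitive central idempotent of $\Q(H/K)$, which by the Lemma and Corollary above is the field component $\Q(\xi_d)$, i.e. the sum of the complex idempotents attached to the faithful (kernel $K$) characters. There are $\phi(d)=[\Q(\chi):\Q]$ such characters, and the $e_{\C}(\chi^{\tau})$ are pairwise orthogonal.

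Next I would study $e(G,H,K)=\sum_{t\in T}\varepsilon(H,K)^{t}$. Because $T$ is a transversal of $C$ and $C$ is by definition the stabiliser of $\varepsilon(H,K)$ under $G$-conjugation, the terms run exactly once over the distinct $G$-conjugates of $\varepsilon(H,K)$; hence $e(G,H,K)$ is $G$-invariant and therefore central in $\Q G$. Expanding via the local identity gives $e(G,H,K)=\sum_{t,\tau}e_{\C}((\chi^{\tau})^{t})$. Each summand is a conjugate of a linear idempotent of $\C H$ and satisfies $e_{\C}((\chi^{\tau})^{t})=e_{\C}((\chi^{\tau})^{t})e_{\C}((\chi^{G})^{\tau})$, because inducing a conjugate of $\chi^{\tau}$ yields $(\chi^{G})^{\tau}$, which lies in the Galois orbit of the irreducible $\chi^{G}$. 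Thus every summand is supported on the single rational component $A_{\Q}(G,H,K)$, so $e(G,H,K)\in A_{\Q}(G,H,K)$ and, being central, equals $z\,e_{\Q}(\chi^{G})$ for some $z$ in the centre $F=\Q(\chi^{G})$ of that component.

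It remains to prove $z\in\Q$ and to evaluate it; this is the delicate point. I would project $e(G,H,K)$ onto each complex component $e_{\C}((\chi^{G})^{\sigma})\cong M_{[G:H]}(\C)$: the projection onto the $\sigma$-component is the sum of those $e_{\C}((\chi^{\tau})^{t})$ with $(\chi^{G})^{\tau}=(\chi^{G})^{\sigma}$, each of which, by Frobenius reciprocity and irreducibility of $\chi^{G}$, has matrix trace one, and for fixed $t$ the number of admissible $\tau$ is the fibre size $[\Q(\chi):\Q(\chi^{G})]$, giving $[G:C][\Q(\chi):\Q(\chi^{G})]$ summands in all. Centrality forces this projection to be a scalar matrix, and taking matrix traces identifies the scalar as $[G:C][\Q(\chi):\Q(\chi^{G})]/[G:H]=[\Q(\chi):\Q(\chi^{G})]/[C:H]$, independently of $\sigma$. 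Hence $z$ is fixed by $\text{Gal}(F/\Q)$, so $z\in\Q$ and $e_{\Q}(\chi^{G})=\frac{[C:H]}{[\Q(\chi):\Q(\chi^{G})]}e(G,H,K)$; as $e(G,H,K)$ is then a nonzero rational multiple of $e_{\Q}(\chi^{G})$, it generates the same ideal $A_{\Q}(G,H,K)=\Q G\,e_{\Q}(\chi^{G})$. The main obstacle is precisely this bookkeeping—checking that each image has trace one and that the fibre and orbit counts combine into a $\sigma$-independent scalar—and it is here that the Shoda condition (S3), through the irreducibility of $\chi^{G}$, does the real work.
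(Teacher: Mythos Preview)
The paper is a survey and states this theorem of Olivieri, del R\'{\i}o and Sim\'on without proof, so there is no argument in the paper to compare against. Your proof is correct and is essentially the standard one: decompose $\varepsilon(H,K)$ in $\C H$ as the sum of the $e_{\C}(\chi^{\tau})$ over the faithful linear characters of $H/K$, show that every $G$-conjugate of each such idempotent lands in the $\C$-component of some Galois conjugate $(\chi^{G})^{\sigma}$ (using that $((\chi^{\tau})^{t})^{G}=(\chi^{G})^{\tau}$ is irreducible), and then read off the scalar by a trace/Frobenius-reciprocity count. The key computation---that $e_{\C}((\chi^{\tau})^{t})$ has trace $\langle (\chi^{G})^{\sigma},(\chi^{G})^{\tau}\rangle$ in the $\sigma$-component, hence trace $1$ exactly when $(\chi^{G})^{\tau}=(\chi^{G})^{\sigma}$---is exactly right, and linearity of the trace makes it unnecessary to worry about whether the summands $e_{\C}((\chi^{\tau})^{t})$ are pairwise distinct. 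One cosmetic point: you invoke the notation $A_{\Q}(G,H,K)$ before it is established that $\Q G\,e(G,H,K)=\Q G\,e_{\Q}(\chi^{G})$; it would be cleaner to write $\Q G\,e_{\Q}(\chi^{G})$ there and deduce the equality of the two ideals at the end, as you in fact do.
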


A character of a finite group is said to be monomial  if it is the character afforded by a representation induced from a linear character.
One says that $G$ is a monomial group  if every irreducible complex character of $G$ is monomial.

\begin{corollary}
A finite group $G$ is monomial if and only if every primitive central idempotent of $\Q G$ is of the form $qe(G,H,K)$ for $(H,K)$ a Shoda pair of $G$ and $q\in \Q$.
\end{corollary}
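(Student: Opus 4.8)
The plan is to pass through the dictionary between the primitive central idempotents of $\Q G$ and the Galois orbits of irreducible complex characters. I would first record the facts I intend to use: since $\C G = \Q G \otimes_{\Q} \C$, every primitive central idempotent of $\Q G$ is a sum $\sum_{\chi} e(\chi)$ of the complex idempotents $e(\chi)$ introduced earlier, taken over a single orbit of $\mathrm{Gal}(\Q(\xi_{|G|})/\Q)$ acting on $\mbox{Irr}(G)$ (recall $\Q(\xi_{|G|})$ is a splitting field by Brauer). Equivalently, to each $\chi \in \mbox{Irr}(G)$ there is attached the unique primitive central idempotent $e_{\Q}(\chi)$ of $\Q G$ with $\chi(e_{\Q}(\chi)) \neq 0$; every primitive central idempotent arises this way, and $e_{\Q}(\chi) = e_{\Q}(\chi')$ exactly when $\chi, \chi'$ are Galois conjugate. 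I will also use that the Galois action commutes with induction, $\sigma(\psi^{G}) = (\sigma\psi)^{G}$, and carries linear characters to linear characters.

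For the forward implication I would assume $G$ monomial, take a primitive central idempotent $e = e_{\Q}(\chi)$, and use monomiality to write $\chi = \psi^{G}$ with $\psi$ linear on a subgroup $H$. Setting $K = \ker\psi$ gives $K \lhd H$ with $H/K$ cyclic, so $\psi \in \text{Lin}(H,K)$ and (S1), (S2) hold; since $\psi^{G}$ is irreducible, the Proposition on Shoda pairs forces (S3), so $(H,K)$ is a Shoda pair. The theorem of Olivieri, del R\'io and Sim\'on then expresses $e = e_{\Q}(\psi^{G}) = q\, e(G,H,K)$ with $q \in \Q$, which is the desired form.

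For the converse I would take an arbitrary $\chi \in \mbox{Irr}(G)$ and aim to prove it monomial. By hypothesis $e_{\Q}(\chi) = q\, e(G,H,K)$ for a Shoda pair $(H,K)$, while the same theorem, applied to any $\psi \in \text{Lin}(H,K)$, gives $e_{\Q}(\psi^{G}) = q'\, e(G,H,K)$ with the very same element $e(G,H,K)$. Since both idempotents are nonzero, $q, q' \neq 0$ and $e_{\Q}(\chi) = (q/q')\, e_{\Q}(\psi^{G})$; squaring both sides and using idempotency forces the scalar to equal $1$, so $e_{\Q}(\chi) = e_{\Q}(\psi^{G})$. By the dictionary, $\chi$ is Galois conjugate to $\psi^{G}$, say $\chi = \sigma(\psi^{G}) = (\sigma\psi)^{G}$, and as $\sigma\psi$ is again linear on $H$, the character $\chi$ is monomial.

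The step I expect to be most delicate is not the idempotent arithmetic but the Galois bookkeeping underpinning it: verifying that $\chi \mapsto e_{\Q}(\chi)$ is surjective onto primitive central idempotents with fibres exactly the Galois orbits, and that the identity $\sigma(\psi^{G}) = (\sigma\psi)^{G}$ together with Galois-stability of linear characters genuinely upgrades ``some Galois conjugate of $\chi$ is monomial'' to ``$\chi$ itself is monomial.'' Once these are in place, the scalar-cancellation argument closing the converse is routine.
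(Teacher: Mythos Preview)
Your argument is correct and is precisely the intended derivation: the paper states this as an unproved corollary of the Olivieri--del R\'{\i}o--Sim\'on theorem together with the Shoda proposition, and your passage through the Galois-orbit description of $e_{\Q}(\chi)$, the identity $\sigma(\psi^{G})=(\sigma\psi)^{G}$, and the scalar-cancellation for idempotents is exactly how that corollary is meant to follow.
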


This result allows to compute all primitive central idempotents of $\Q G$ for $G$ a finite monomial group and this without actually computing the monomial absolutely irreducible characters of $G$. It suffices to compute all the Shoda pairs $(H,K)$ of $G$, compute $e(G,H,K)$ and then compute the rational number $q$ such that $qe(G,H,K)$ is an idempotent. Note that different Shoda pairs can determine the same primitive central idempotent.
Janssens determined 
a formula for all primitive central idempotents of $\Q G$ for arbitrary finite groups $G$ (the main tool used  is Artin's Induction Theorem). Note that the formula yields a rational linear combination of elements of the form $e(G,C,C)$ where $C$ is a cyclic subgroup of $G$; but in general $e(G,C,C)$ is not an idempotent.

So, for some classes of groups one can compute explicitly the primitive central idempotents $e$. A next step is to determine a description of the simple component $\Q Ge$ for a given central idempotent $e(G,H,K)$. In order to compute the unit group $\U (\Z G)$ one would like to obtain a concrete description that yields control on the rational representations (without having to calculate the character table of $G$).
We now state how this can be done  for $e(G,H,K)$ provided the Shoda Pair  satisfies some additional  conditions. 
All these results are due to Olivieri, del R\'{i}o and Sim\'on.

A useful lemma is the following.

\begin{lemma}
Let $H$ and $K$ be subgroups of a finite group $G$ such that $K\lhd H$ and $H/K$ is cyclic. Assume $\varepsilon (H,K) \varepsilon (H,K)^{g}=0$ for all $g\in G\setminus \text{Cen}_{G}(\varepsilon(H,K))$. Then $\text{Cen}_{G}(\varepsilon (H,K))=N_{G}(K)=\{ g\in G \mid g^{-1}Kg=K\}$.
\end{lemma}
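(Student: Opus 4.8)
The plan is to establish the two inclusions $\text{Cen}_{G}(\varepsilon(H,K)) \subseteq N_{G}(K)$ and $N_{G}(K) \subseteq \text{Cen}_{G}(\varepsilon(H,K))$ separately; only the second will use the orthogonality hypothesis. Throughout I use that conjugation acts by $\varepsilon(H,K)^{g} = \varepsilon(H^{g},K^{g})$, which is immediate from $\widehat{N}^{g} = \widehat{N^{g}}$ and the fact that conjugation by $g$ carries $M(H/K)$ onto $M(H^{g}/K^{g})$. Writing $n = [H:K]$, I also use that $\Q H\,\varepsilon(H,K) \cong \Q(\xi_{n})$ is the faithful simple component of $\Q(H/K)$, so that $K$ is recovered as the left stabiliser $\{x \in H : x\,\varepsilon(H,K) = \varepsilon(H,K)\}$ (an element $x \in H$ fixes $\varepsilon(H,K)$ on the left iff its image in $\Q(\xi_{n})$ is $1$, iff $x \in K$). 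For the inclusion $\subseteq$: if $x\,\varepsilon(H,K) = \varepsilon(H,K)$ for some $x \in G$, then $\supp \varepsilon(H,K) \subseteq H \cap xH$ forces $x \in H$, so the left stabiliser of $\varepsilon(H,K)$ in all of $G$ is exactly $K$; the left stabiliser of $\varepsilon(H,K)^{g}$ is then $K^{g}$, and any $g \in \text{Cen}_{G}(\varepsilon(H,K))$ makes these coincide, giving $K^{g} = K$. This step needs no hypothesis.

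For the inclusion $\supseteq$, fix $g \in N_{G}(K)$ and suppose, for contradiction, that $g \notin \text{Cen}_{G}(\varepsilon(H,K))$. The hypothesis then yields $\varepsilon(H,K)\,\varepsilon(H^{g},K) = 0$ (using $K^{g}=K$). Since $K \lhd H$ we have $H \subseteq N_{G}(K) =: \mathcal{N}$, and likewise $H^{g} \subseteq \mathcal{N}$; as $K \lhd \mathcal{N}$, the element $\widehat{K}$ is central in $\Q\mathcal{N}$ and $\widehat{K}\,\Q\mathcal{N} \cong \Q(\mathcal{N}/K)$. Passing to $\overline{\mathcal{N}} = \mathcal{N}/K$ through this isomorphism turns the two idempotents into the faithful idempotents $\overline{\varepsilon} = \varepsilon(\overline{H},1)$ and $\overline{\varepsilon}' = \varepsilon(\overline{H}^{\overline{g}},1)$ of the cyclic subgroups $\overline{H}, \overline{H}^{\overline{g}}$ of order $n$, with $\overline{\varepsilon}\,\overline{\varepsilon}' = 0$ and $\overline{\varepsilon} \neq \overline{\varepsilon}'$. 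It therefore suffices to contradict $\overline{\varepsilon}\,\overline{\varepsilon}' = 0$.

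This is the crux. Over $\C$ write $\overline{\varepsilon} = \sum_{\chi} e(\chi)$ and $\overline{\varepsilon}' = \sum_{\psi} e(\psi)$, the sums ranging over the faithful linear characters $\chi$ of $\overline{H}$ and $\psi$ of $\overline{H}^{\overline{g}}$. Because $\overline{H}$ is abelian, $e(\chi)\overline{\varepsilon} = e(\chi)$ and $\overline{\varepsilon}'e(\psi) = e(\psi)$, so $\overline{\varepsilon}\,\overline{\varepsilon}' = 0$ would force $e(\chi)e(\psi) = e(\chi)(\overline{\varepsilon}\,\overline{\varepsilon}')e(\psi) = 0$ for every such pair. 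Computing the coefficient of the identity in $e(\chi)e(\psi)$ gives $|D|/n^{2}$ when $\chi$ and $\psi$ restrict to the same character of $D = \overline{H} \cap \overline{H}^{\overline{g}}$ and $0$ otherwise. Hence it is enough to exhibit one faithful $\chi$ and one faithful $\psi$ agreeing on $D$. Since $D$ is a subgroup of the cyclic group $\overline{H}$, a faithful $\chi$ restricts to a faithful (generating) character $\varphi_{0}$ of $D$; and a Chinese-Remainder argument produces a faithful character $\psi$ of $\overline{H}^{\overline{g}} \cong C_{n}$ with $\psi|_{D} = \varphi_{0}$ (one is free at the primes dividing $n$ but not $|D|$, while coprimality at the primes dividing $|D|$ is automatic). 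For this pair $e(\chi)e(\psi) \neq 0$, the desired contradiction.

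The main obstacle is precisely this last non-orthogonality step: one must rule out that two distinct conjugate faithful idempotents supported on cyclic subgroups are orthogonal. Conjugacy alone does not prevent orthogonality (it already fails for rank-one projections in $M_{2}(\C)$), so the argument must genuinely use the group structure. The reduction to the primitive idempotents $e(\chi), e(\psi)$, together with the identity-coefficient formula $|D|/n^{2}$ and the existence of compatible faithful characters, is what forces non-orthogonality; verifying the coefficient formula and the Chinese-Remainder existence are then the only real computations, and both are routine once set up.
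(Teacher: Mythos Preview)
The paper states this lemma without proof (it is introduced as ``a useful lemma'' and the text moves directly on to the next proposition), so there is no in-paper argument to compare yours against; the intended source is the Olivieri--del R\'{\i}o--Sim\'on work cited just above, and the proof recorded in \cite{bookvol1}.

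Your argument is correct. The inclusion $\text{Cen}_G(\varepsilon(H,K)) \subseteq N_G(K)$ via identifying $K$ as the left stabiliser of $\varepsilon(H,K)$ in $G$ is exactly the standard line: your observation that $\supp \varepsilon(H,K)\subseteq H$ together with nonemptiness of the support forces any left stabilising $x$ into $H$, after which the description of $\Q H\,\varepsilon(H,K)$ as the faithful field component of $\Q(H/K)$ pins down the stabiliser as $K$. (You implicitly use $\varepsilon(H,K)\neq 0$, which holds precisely because $H/K$ is cyclic; it might be worth saying this explicitly.)

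For the reverse inclusion your approach is a legitimate, self-contained variant. The reduction modulo $K$ to two faithful idempotents of conjugate cyclic subgroups of the same order is standard; your way of finishing---computing the coefficient of $1$ in $e(\chi)e(\psi)$ to be $|D|/n^{2}$ when $\chi|_{D}=\psi|_{D}$, and then producing such a pair via a Chinese Remainder lift of a faithful character of $D$ to a faithful character of the cyclic group $\overline{H}^{\overline g}\cong C_{n}$---is a direct elementary route that avoids any appeal to induced characters or Frobenius reciprocity. The CRT step is correctly justified: primes dividing $|D|$ are automatically coprime to the chosen exponent, and at the remaining primes of $n$ one has full freedom. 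So the non-orthogonality $e(\chi)e(\psi)\neq 0$ follows, contradicting $\overline{\varepsilon}\,\overline{\varepsilon}'=0$, and the lemma is established.
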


\begin{proposition}
Let $G$ be a finite group and let $H$ and $K$ be subgroups such that $K\subseteq H$. The following conditions are equivalent.
\begin{enumerate}
\item $(H,K)$ is a Shoda pair of $G$, $H\lhd N_{G}(K)$ and the different $G$-conjugates of $\varepsilon (H,K)$ are orthognal.
\item $(H,K)$ is a strong Shoda pair, that is,
\begin{enumerate}
\item[(SS1)] $H\lhd N_G(K)$,
\item[(SS2)] $H/K$ is cyclic and maximal abelian subgroup of $N_G(K)/K$ and
\item[(SS3)] for every $g\in G\setminus N_G(K)$, $\varepsilon (H,K) \varepsilon(H,K)^{g}=0$
\end{enumerate}
\item The following conditions hold
\begin{enumerate}
\item[(SS1')] $H\lhd \text{Cen}(\varepsilon (H,K))$,
\item[(SS2')] $H/K$ is cyclic and a maximal abelian subgroup of $\text{Cen}(\varepsilon (H,K))$ and
\item[(SS3')] for every $g\in G\setminus \text{Cen}(\varepsilon (H,K))$,  $\varepsilon (H,K) \varepsilon(H,K)^{g}=0$.
\end{enumerate}
\end{enumerate}
A finite group is said to be strongly monomial if every irreducible complex character  of $G$ is strongly monomial, i.e. it is of the form $\chi^{G}$ for $\chi \in \text{Lin}(H,K)$ and $(H,K)$ a strong Shoda pair of $G$. Note that for such a group every primitive central idempotent of $\Q G$ is of the form $e(G,H,K)$ with $(H,K)$ a strong Shoda pair of $G$.
\end{proposition}

\begin{theorem}
Every abelian-by-supersolvable finite group is strongly monomial.
\end{theorem}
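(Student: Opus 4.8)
The plan is to argue by induction on $\ord{G}$, proving that every $\chi\in\operatorname{Irr}(G)$ is strongly monomial, where I fix an abelian normal subgroup $A$ with $G/A$ supersolvable. First I would dispose of two reductions. If $G$ is abelian, then $\chi$ is linear with kernel $K$ and $(G,K)$ is a strong Shoda pair (conditions (SS1)--(SS3) are immediate, the last being vacuous), which settles the base case. If $\chi$ is not faithful, set $M=\ker(\chi)\neq 1$; then $G/M$ is again abelian-by-supersolvable (the image of $A$ is abelian normal and the further quotient is a quotient of $G/A$, hence supersolvable) and smaller, so by induction $\overline{\chi}\in\operatorname{Irr}(G/M)$ equals $\overline{\eta}^{\,G/M}$ for a strong Shoda pair $(\overline H,\overline K)$ of $G/M$. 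Pulling back along $G\to G/M$ gives the pair $(H,K)$ of preimages, and since $M\subseteq K$ one checks $N_G(K)/M=N_{G/M}(\overline K)$ and that (SS1)--(SS3) transfer verbatim; hence $(H,K)$ is a strong Shoda pair of $G$ and $\chi=\eta^{G}$.

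So it remains to treat $\chi$ faithful with $G$ nonabelian. The key first step is to produce an abelian normal subgroup $N\lhd G$ with $N\not\subseteq Z(G)$: if $A\not\subseteq Z(G)$ take $N=A$; otherwise $A\subseteq Z(G)$, so $G/Z(G)$ is a quotient of $G/A$ and therefore supersolvable and nontrivial, and lifting one of its nontrivial cyclic normal subgroups to $G$ produces a central-by-cyclic, hence abelian, normal $N$ not lying in $Z(G)$. By Clifford's theorem $\chi|_{N}$ is a sum of $G$-conjugates of a linear character $\theta$ of $N$; faithfulness forces $\theta\neq 1$, and since $N\not\subseteq Z(G)$ the complex inertia group $T=I_G(\theta)=\{g\in G\mid\theta^{g}=\theta\}$ is proper (a $G$-invariant $\theta$ would make $N$ act by scalars in a faithful representation, forcing $N\subseteq Z(G)$). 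The Clifford correspondence yields $\psi\in\operatorname{Irr}(T\mid\theta)$ with $\chi=\psi^{G}$, and $T\supseteq N$ is abelian-by-supersolvable ($T\cap A\lhd T$ is abelian with $T/(T\cap A)\cong TA/A\leq G/A$) and proper, so by induction $\psi=\eta^{T}$ for a strong Shoda pair $(H,K)$ of $T$ with $\eta\in\text{Lin}(H,K)$. Because $\theta$ is $T$-invariant, $N$ acts as scalars in $\psi$, which lets me normalise the pair so that $N\subseteq H$ and $\eta|_{N}=\theta$; in particular $K\cap N=\ker(\theta)$ and $\chi=(\eta^{T})^{G}=\eta^{G}$.

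The remaining and decisive task is to verify that $(H,K)$ is a strong Shoda pair of $G$ itself, not merely of $T$; all conditions are known relative to $N_T(K)=N_G(K)\cap T$, so the issue is to control the possibly larger group $N_G(K)$. The maximality condition (SS2) goes through cleanly: if $L/K$ were an abelian subgroup of $N_G(K)/K$ strictly containing $H/K$, then any $x\in L$ centralises $H/K$, hence $NK/K$ (as $N\subseteq H$), so for $n\in N$ one has $[x,n]\in K\cap N=\ker(\theta)$, giving $\theta^{x}=\theta$ and $x\in T$; then $x\in N_T(K)$ and maximality of $H/K$ in $N_T(K)/K$ forces $x\in H$, a contradiction. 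Cyclicity of $H/K$ is inherited, so (SS2) holds for $G$.

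The hard part will be exactly the normality (SS1), $H\lhd N_G(K)$, and the orthogonality (SS3) for all $g\in G\setminus N_G(K)$, which is where the difference between the complex inertia group $T=I_G(\theta)$ and the rational stabiliser $N_G(K)$ must be reconciled. The subtlety is that $N_G(K)$ is in general strictly larger than $T$: an element normalising $K$ need only permute the faithful characters of $H/K$, so it may send $\theta$ to a Galois power $\theta^{j}$ rather than fix it. The mechanism to handle this is the useful lemma identifying $\text{Cen}_G(\varepsilon(H,K))$ with $N_G(K)$, together with the fact that $\varepsilon(H,K)$ detects the $N$-character $\theta$ only up to its Galois orbit: an element not normalising $K\cap N=\ker(\theta)$ moves $\theta$ outside this orbit and hence produces orthogonal idempotents $\varepsilon(H,K)\varepsilon(H,K)^{g}=0$, while an element normalising $\ker(\theta)$ but not $K$ is disposed of by feeding the $T$-level orthogonality through a Mackey decomposition of $\eta^{G}$. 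Establishing this orthogonality for every $g\in G\setminus N_G(K)$ simultaneously gives (SS3) and, via the lemma, the equality $\text{Cen}_G(\varepsilon(H,K))=N_G(K)$ from which (SS1) follows. This reconciliation is the genuine content of the theorem, and it is precisely here that supersolvability is indispensable, guaranteeing at each stage of the descent both the existence of the non-central abelian normal subgroup $N$ and the controlled behaviour of the resulting chain of inertia subgroups.
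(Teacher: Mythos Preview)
The paper does not actually prove this theorem; it only records a hint (``If $G$ is a finite supersolvable group and $N$ a maximal abelian normal subgroup of $G$ then $N$ is a maximal abelian subgroup of $G$'') and immediately follows it with the Proposition that if $K\lhd H\lhd G$ and $H/K$ is cyclic and maximal abelian in $N_G(K)/K$, then $(H,K)$ is automatically a strong Shoda pair. So the intended route is to produce, for each $\chi$, a Shoda pair with $H$ normal in $G$ itself (built over a maximal abelian normal subgroup), and then invoke that Proposition, which disposes of (SS1) and (SS3) in one stroke. Your Clifford-theoretic induction is a legitimate alternative strategy and is close to how Olivieri--del R\'{\i}o--Sim\'on argue, but the paper's shortcut is precisely designed to avoid the step you yourself flag as ``the hard part'': the passage from a strong Shoda pair of the inertia group $T$ to one of $G$.

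That passage is where your write-up has a real gap. Two points specifically: first, the claim that one may ``normalise the pair so that $N\subseteq H$'' is doing a lot of work and is not justified; the pair $(H,K)$ you obtain by induction is some strong Shoda pair of $T$, and arranging $N\subseteq H$ with $\eta|_N=\theta$ requires either a refinement argument or a stronger induction hypothesis that you have not set up. Second, your derivation of (SS1) is circular as written. The lemma you invoke has the orthogonality (for $g\notin\text{Cen}_G(\varepsilon(H,K))$, not for $g\notin N_G(K)$) as a \emph{hypothesis} and only outputs $\text{Cen}_G(\varepsilon(H,K))=N_G(K)$; it does not yield $H\lhd N_G(K)$. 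Since $N_G(K)$ can be strictly larger than $T$ (as you correctly observe, an element of $N_G(K)$ may send $\theta$ to a Galois power), you still owe a direct argument that every $g\in N_G(K)$ normalises $H$, and your sketch does not provide one. The paper's approach sidesteps exactly this difficulty by arranging $H\lhd G$ from the outset via the maximal-abelian-normal-subgroup fact, so that (SS1) is free and the Proposition finishes the job.
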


A useful fact to prove this result is the following. If $G$ is finite supersolvable group and $N$ a maximal abelian normal subgroup of $G$ then $N$ is a maximal abelian subgroup of $G$. 

\begin{proposition}
Let $(H,K)$ be a pair of subgroups of a finite group $G$ such that $K\lhd H\lhd G$ and satisfying (SS2) (i.e. $H/K$ is cyclic and a maximal abelian subgroup of $N_G(K)/K$). Then $(H,K)$ is a strong Shoda pair of $G$.
\end{proposition}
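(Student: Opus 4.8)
The plan is to check the three defining conditions (SS1)--(SS3) of a strong Shoda pair. Condition (SS2) is among the hypotheses, so the only genuine work is (SS1) and (SS3). Condition (SS1) is immediate: since $K\lhd H$, every element of $H$ normalises $K$, so $H\subseteq N_G(K)$; and since $H\lhd G$ we have $gHg^{-1}=H$ for all $g\in G$, in particular for all $g\in N_G(K)$, whence $H\lhd N_G(K)$.

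The heart of the matter is (SS3), and the key preliminary observation is that, because $H/K$ is cyclic, $\varepsilon(H,K)$ is not merely a central idempotent but a \emph{primitive} central idempotent of $\Q H$ whose component is a field. Indeed, from $K\subseteq D$ one gets $\widehat{K}\,\widehat{D}=\widehat{D}$, so each factor $\widehat{K}-\widehat{D}$ of $\varepsilon(H,K)$ is fixed under left multiplication by $\widehat{K}$; hence $\widehat{K}\,\varepsilon(H,K)=\varepsilon(H,K)$ and $\varepsilon(H,K)$ lies in the direct factor $\Q H\widehat{K}\cong\Q(H/K)$. Under this identification it becomes the primitive central idempotent cutting out the faithful component $\Q(\xi_{|H/K|})$, a field, and a primitive central idempotent of a direct factor of $\Q H$ is again a primitive central idempotent of $\Q H$. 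By the earlier lemma on field components its stabiliser is $S_{H}(\varepsilon(H,K))=K$, so the subgroup $K$ is recoverable from the idempotent.

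With this in place, (SS3) is almost formal. I would fix $g\in G\setminus N_G(K)$, so that $K^{g}\neq K$. Since $H\lhd G$ we have $H^{g}=H$, and conjugation by $g$ carries $\widehat{K},\widehat{D}$ to $\widehat{K^{g}},\widehat{D^{g}}$ and the minimal normal subgroups of $H/K$ bijectively to those of $H/K^{g}$; hence $\varepsilon(H,K)^{g}=\varepsilon(H^{g},K^{g})=\varepsilon(H,K^{g})$. As $K^{g}\lhd H$ with $H/K^{g}$ cyclic, $\varepsilon(H,K^{g})$ is again a primitive central idempotent of $\Q H$, and its stabiliser is $K^{g}\neq K$, so it differs from $\varepsilon(H,K)$. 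Distinct primitive central idempotents of a semisimple algebra are orthogonal, so $\varepsilon(H,K)\,\varepsilon(H,K)^{g}=\varepsilon(H,K)\,\varepsilon(H,K^{g})=0$, which is exactly (SS3).

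I expect the only delicate point to be the preliminary observation that $\varepsilon(H,K)$ is primitive (not just central) in $\Q H$ and that $K$ is its stabiliser; once these are secured, the hypothesis $H\lhd G$ (rather than merely $H\lhd N_G(K)$) is precisely what forces $\varepsilon(H,K)^{g}=\varepsilon(H,K^{g})$ to live in the single algebra $\Q H$, and orthogonality of distinct primitive central idempotents then yields (SS3) with no further computation.
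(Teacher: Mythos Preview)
Your argument is correct. The paper itself, being a survey, states this proposition without proof, so there is no proof in the text to compare against; your route via the primitivity of $\varepsilon(H,K)$ in $\Q H$ is the standard one (this is the argument in Olivieri--del R\'{\i}o--Sim\'on, to which the paper attributes the surrounding results). The crucial use of the extra hypothesis $H\lhd G$ is exactly as you identify: it forces $\varepsilon(H,K)^{g}=\varepsilon(H,K^{g})$ to remain a primitive central idempotent of the \emph{same} algebra $\Q H$, after which orthogonality of distinct primitive central idempotents gives (SS3) for free. Your justification that $S_H(\varepsilon(H,K))=K$ (so that $K^{g}\neq K$ forces $\varepsilon(H,K^{g})\neq\varepsilon(H,K)$) is also sound, since $\varepsilon(H,K)$ lands in $\Q H\widehat{K}\cong\Q(H/K)$ as the idempotent of the faithful component of the cyclic group $H/K$.
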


\begin{theorem}
Let $G$ be a finite metabelian group and let $A$ be a maximal abelian subgroup of $G$ containing the commutator subgroup $G'$. The primitive central idempotents of $\Q G$ are the elements of the form $e(G,H,K)$ where $(H,K)$ is a  pair of subgroups of $G$ satisfying the following conditions:
\begin{enumerate}
\item $H$ is a maximal element in the set $\{ B\leq G \mid A\leq B \text{ and } B'\subseteq K \subseteq B\}$ and
\item $H/K$ is cyclic.
\end{enumerate}
\end{theorem}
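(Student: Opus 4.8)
The plan is to reduce everything to the theory of strong Shoda pairs developed above. Since a metabelian group is in particular abelian-by-supersolvable, it is strongly monomial by the preceding theorem, so every primitive central idempotent of $\Q G$ is of the form $e(G,H,K)$ for a strong Shoda pair $(H,K)$. Hence it is enough to prove two things: first, that every pair $(H,K)$ satisfying conditions (1) and (2) is a strong Shoda pair (so that $e(G,H,K)$ really is a primitive central idempotent); and second, that conversely each primitive central idempotent can be realized by a pair of the prescribed shape.

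For the first part I would check that conditions (1) and (2) supply exactly the hypotheses of the Proposition above that deduces a strong Shoda pair from (SS2) together with $K\lhd H\lhd G$. Indeed, $G'\le A\le H$ forces $G/H$ to be abelian, so $H\lhd G$, while $H'\subseteq K$ forces $H/K$ to be abelian, so $K\lhd H$ and $H\subseteq N_G(K)$; condition (2) gives that $H/K$ is cyclic. To obtain (SS2) I must see that $H/K$ is maximal abelian in $N_G(K)/K$. If it were not, there would be $M$ with $H\subsetneq M\subseteq N_G(K)$ and $M/K$ abelian; but then $A\le H\le M$ and $M'\subseteq K\subseteq M$, so $M$ belongs to the set $\{B\le G\mid A\le B,\ B'\subseteq K\subseteq B\}$ and strictly contains $H$, contradicting the maximality in (1). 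Thus (SS2) holds and $(H,K)$ is a strong Shoda pair.

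For the converse I would run Clifford theory relative to $A$, whose maximality is essential here. A given primitive central idempotent is $e_\Q(\rho)$ for an irreducible character $\rho$; restricting to the abelian normal subgroup $A$, Clifford's theorem shows $\rho$ lies over a single $G$-orbit of a linear character $\lambda$ of $A$, with inertia group $T=I_G(\lambda)\supseteq A$. Because $G$ is metabelian, $T'\subseteq G'\subseteq A$, so $T/A$ is abelian; if $\lambda$ extends to $T$, then by Gallagher every irreducible constituent over $\lambda$ is a product of such an extension with a linear character of $T/A$, hence is itself linear, and $\rho=\mu^G$ for a linear character $\mu$ of a subgroup containing $A$. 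Setting $K=\ker\mu$ and replacing $H$ by a maximal member of $\{B\le G\mid A\le B,\ B'\subseteq K\subseteq B\}$ through which $\mu$ still factors with the same kernel, the resulting pair satisfies (1) and (2) and, by uniqueness of the primitive central idempotent not annihilating $\rho$, gives back $e_\Q(\rho)$.

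I expect the converse to be the main obstacle. The naive hope, that $\lambda$ extends to all of $T$ and one may take $H=T$, can fail: the obstruction in $H^2(T/A,\C^\times)$ need not vanish. Moreover, even when a suitable linear $\mu$ with kernel $K$ is found on some $H\supseteq A$, one must pass to an $H$ that is maximal in the set of condition (1) while keeping $H/K$ cyclic (condition (2)) and keeping the induced character equal to $\rho$, so that the primitive central idempotent is unchanged. Reconciling the Clifford correspondence with this maximality—showing that the maximal admissible $H$ yields a cyclic section $H/K$ inducing the same $\rho$—is the delicate technical heart of the argument; the maximality of $A$ among abelian subgroups containing $G'$ is what ultimately guarantees that every irreducible character is reachable from a linear character of a subgroup lying above $A$.
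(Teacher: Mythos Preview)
The paper is a survey and does not prove this theorem; it is stated as a result of Olivieri, del R\'{\i}o and Sim\'on, with full arguments deferred to the references (in particular \cite{bookvol1}). There is thus no proof in the paper to compare your attempt against.

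On the merits of your proposal itself: the forward direction is correct and complete. Your verification that $G'\le A\le H$ forces $H\lhd G$, that $H'\subseteq K$ forces $K\lhd H$, and your maximality argument establishing (SS2), followed by the appeal to the Proposition on pairs with $K\lhd H\lhd G$, is exactly what is needed.

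The converse, as you candidly acknowledge, is only a sketch with unresolved gaps. Two comments may help sharpen it. First, the worry about the obstruction in $H^2(T/A,\C^\times)$ to extending $\lambda$ to the full inertia group $T$ is something of a red herring: the standard argument never attempts this extension. Second, and more substantively, your proposal is ambiguous about what $K$ actually is, and this matters. Taking $K=\ker\lambda\subseteq A$ does not in general yield a cyclic $H/K$: already for a linear character $\chi$ of $D_8=\langle a,b\rangle$ with $\chi(a)=-1$, one has $\ker(\chi|_A)=\langle a^2\rangle$, the maximal $H$ in the relevant set is $G$ itself, and $G/\langle a^2\rangle\cong C_2\times C_2$ is not cyclic. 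The correct $K$ is the kernel of a linear character on $H$, not on $A$, and arranging \emph{simultaneously} that $H$ is maximal for condition~(1), that $H/K$ is cyclic, and that the induced character recovers the given primitive central idempotent is the genuine technical work; it uses $C_G(A)=A$ (a consequence of the maximality of $A$) in an essential way. Your outline is oriented correctly but does not yet supply this step.
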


For a finite group $G$ one  can describe the simple component of $\Q G$ associated to a strong Shoda Pair.

\begin{theorem}
Let $(H,K)$ be a strong Shoda pair of the finite group $G$ and $\chi \in \text{Lin}(H,K)$, $N=N_G(K)$, $n=[G:N]$, $h=[H:K]$ and $\overline{x}=xK$ a generator of the group $H/K$. The following properties hold.
\begin{enumerate}
\item $N=\text{Cen}_G(\varepsilon (H,K))$,
\item $e_{\Q}(\chi^{G}) =e(G,H,K)$,
\item The mapping $\sigma : N/H\rightarrow \text{Gal} (\Q (\xi_h )/\Q (\chi^{G}))$ defined by $\overline{y}\mapsto \sigma_{\overline{y}}$, for $\overline{y}\in N/H$, with 
 $$\sigma_{\overline{y}} (\xi_h ) =\xi_{h}^i$$
 where $i$ is such that $\overline{y} \overline{x} \overline{y}^{-1} =\overline{x}^{i}$, is an isomorphism.
 \item $A_{\Q}(G,H,K) \cong M_{n} \left(\Q (\xi_{h}) * (N/H)\right) \cong M_{n}(\Q (\xi_h)/\Q (\chi^{G}),f)$, where $f$ is the element of $H^{2}(N/H, H/K)$ associated to the extension
  $$1\rightarrow H/K \stackrel{\chi}{\cong} \langle \xi_h \rangle \rightarrow N/K \rightarrow N/H \stackrel{\sigma}{\cong} \text{Gal} (\Q (\xi_{h})/\Q (\chi^{G})) \rightarrow 1 .$$
More precisely, for every $a\in N/H$ fix a preimage $u_a$ of $a \in N/K$. Then,
 $$f(a,b) =\xi_h^j,$$
 where $j$ is such that $u_a u_b =\overline{x}^{j} u_{ab}$.
 More explicit,
 choose a right transversal $T$ of $H$ in $N$. Then 
 $$\Q (\xi_h ) * (N/H) =\sum_{t\in T} \Q (\xi_h) u_t$$
 The action $\alpha :N/H \rightarrow \text{Aut}(\Q (\xi_h ))$ is defined in part (3) as follows.
 For $\overline{y} \in N/H$, $\alpha_{\overline{y}} =\sigma_{\overline{y}}$. The twisting $f: N/H \times N/H \rightarrow \U (\Q (\xi_h ))$ is defined by $f(\overline{x}, \overline{y}) =\xi_{h}^{j}$ if  $t_x t_y =x^{j} k_{xy}t_{xy}$ with $t_x ,t_y\in T$ so that $t_xH=\overline{x}$, $t_y H=\overline{y}$, $k_{xy}\in K$ and $j\in \Z$.
 \item Let $F$ be a field of characteristic zero and let $G_F =\text{Gal}(F(\chi )/F(\chi^{G}))$. Consider $G_F$ as a subgroup of $G_{\Q}$ via the restriction
 $G_F \rightarrow G_{\Q}$. Then
  $$A_{F}(\chi^{G}) =M_{nd} (F(\xi_{h})/F(\chi^{G}), f'),$$
where $d=\frac{[\Q (\xi_h):\Q(\chi^{G})]}{[F(\xi_h):F(\chi^{G})]}$ and $f'(\sigma ,\tau ) = f(\sigma|_{\Q (\xi_h )}, \tau|_{\Q (\xi_h)})$ (and this is the unique simple component  $FGe$ with $\chi^{G}(e)\neq 0$).
\end{enumerate}
\end{theorem}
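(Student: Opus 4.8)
The plan is to compute the simple component $A_{\Q}(G,H,K)=\Q G\,e(G,H,K)$ by first understanding the local piece $\Q N\varepsilon$, where $\varepsilon=\varepsilon(H,K)$, and then inducing up to $G$. The first input is that, since $H/K$ is cyclic of order $h$, the element $\varepsilon$ is a primitive central idempotent of $\Q H$ with $\Q H\varepsilon\cong\Q(\xi_h)$; this is the content of the earlier Lemma and Corollary on field components applied to the cyclic quotient $H/K$, the isomorphism sending the generator $\overline{x}=xK$ of $H/K$ to $\xi_h$ (equivalently $\varepsilon=e_{\Q}(\chi)$ for every $\chi\in\text{Lin}(H,K)$). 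For part (1), any $y\in N=N_G(K)$ normalizes $K$ and, by (SS1), normalizes $H$, so conjugation by $y$ maps the pair $(H,K)$ to itself and permutes the minimal normal subgroups defining $\varepsilon$; hence it fixes $\varepsilon$, giving $N\subseteq\text{Cen}_G(\varepsilon)$. The reverse inclusion is precisely the preceding Lemma, whose hypothesis is (SS3); together they give $\text{Cen}_G(\varepsilon)=N_G(K)=N$.

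Next comes the heart of the argument, and the step I expect to be the main obstacle: identifying $\Q N\varepsilon$ as a crossed product. Since $\varepsilon$ is central in $\Q N$ by part (1) and $H\lhd N$, choosing a right transversal $T$ of $H$ in $N$ yields $\Q N\varepsilon=\bigoplus_{t\in T}(\Q H\varepsilon)\,t\varepsilon$, which conjugation organizes into a crossed product $\Q(\xi_h)*(N/H)$ of the field $\Q(\xi_h)$ by $N/H$. The action is the map $\sigma$ of part (3): for $y\in N$, conjugation by $y$ restricts to an automorphism of $\Q H\varepsilon=\Q(\xi_h)$ that is trivial on $K$ and sends $\overline{x}\mapsto\overline{x}^i$ where $\overline{y}\,\overline{x}\,\overline{y}^{-1}=\overline{x}^i$, i.e. $\sigma_{\overline{y}}(\xi_h)=\xi_h^i$. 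The delicate point is that $\sigma$ is faithful: its kernel is the set of cosets centralizing $H/K$ inside $N/K$, and since (SS2) makes $H/K$ a maximal abelian subgroup of $N/K$, its centralizer there is $H/K$ itself, so $\ker\sigma$ is trivial. Faithfulness is exactly what is needed for Artin's theorem to guarantee that $\Q(\xi_h)/\Q(\xi_h)^{N/H}$ is Galois with group $\sigma(N/H)$ and that the crossed product is a central simple algebra over the fixed field $L:=\Q(\xi_h)^{N/H}$; in particular $\varepsilon$ is already primitive central in $\Q N$.

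With $\Q N\varepsilon$ in hand I would induce to $G$. By part (1), $N$ is the full $G$-stabilizer of $\varepsilon$, and by (SS3) the $n=[G:N]$ distinct conjugates $\varepsilon^{s}$ (over a transversal of $N$ in $G$) are orthogonal idempotents with sum $e(G,H,K)$. The standard idempotent computation $\varepsilon\,\Q G\,\varepsilon=\Q N\varepsilon$ (the cross terms $\varepsilon g\varepsilon$ vanish for $g\notin N$ by orthogonality) then gives the ring isomorphism $\Q G\,e(G,H,K)\cong M_n(\Q N\varepsilon)\cong M_n(\Q(\xi_h)*(N/H))$, which is part (4); its second isomorphism is merely the identification of this crossed product with the classical crossed product algebra $(\Q(\xi_h)/\Q(\chi^{G}),f)$ attached to the factor set $f$. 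Since $M_n$ of a simple algebra is simple, $e(G,H,K)$ is a primitive central idempotent of $\Q G$; as $\chi$ does not vanish on $\varepsilon$, the induced character $\chi^{G}$ does not vanish on $e(G,H,K)$, so by uniqueness $e(G,H,K)=e_{\Q}(\chi^{G})$, which is part (2). Comparing centers (with $Z(M_n(B))=Z(B)$) identifies the center $\Q(\chi^{G})$ of the component with $L=\Q(\xi_h)^{N/H}$, so $\sigma$ is an isomorphism onto $\text{Gal}(\Q(\xi_h)/\Q(\chi^{G}))$, proving part (3). The explicit twisting in part (5) is then bookkeeping: fixing preimages $u_a\in N/K$ of $a\in N/H$ and the transversal $T$ of $H$ in $N$, one reads $u_a u_b=\overline{x}^{\,j}u_{ab}$ and sets $f(a,b)=\xi_h^{\,j}$.

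Finally, for part (6) I would descend from $\Q$ to an arbitrary field $F$ of characteristic zero. Writing $FGe=F\otimes_{\Q}\Q Ge$ with $e=e_{\Q}(\chi^{G})$, the Galois group $G_F=\text{Gal}(F(\chi)/F(\chi^{G}))$ embeds by restriction into $G_{\Q}=\text{Gal}(\Q(\chi)/\Q(\chi^{G}))\cong N/H$, and the index $d=\tfrac{[\Q(\xi_h):\Q(\chi^{G})]}{[F(\xi_h):F(\chi^{G})]}$ records how many rational blocks coalesce over $F$. Restricting the cocycle along $G_F\hookrightarrow G_{\Q}$ produces $f'$ with $f'(\sigma,\tau)=f(\sigma|_{\Q(\xi_h)},\tau|_{\Q(\xi_h)})$, and yields $A_F(\chi^{G})=M_{nd}(F(\xi_h)/F(\chi^{G}),f')$ as the unique simple component $FGe$ with $\chi^{G}(e)\neq 0$. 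The only genuine content beyond the rational case is the index computation and the verification that $f'$ is the restriction of $f$; everything else is formal.
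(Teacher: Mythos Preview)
The paper does not include a proof of this theorem; it is a survey and merely records the result, attributing it to Olivieri, del R\'{\i}o and Sim\'on and referring the reader to \cite{bookvol1,bookvol2} for details. So there is nothing in the paper to compare against directly.

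That said, your argument is correct and is essentially the standard proof as found in the original Olivieri--del R\'{\i}o--Sim\'on paper and in \cite{bookvol1}. The key steps---identifying $\Q H\varepsilon(H,K)\cong\Q(\xi_h)$ via the earlier Lemma on field components, using (SS1) and the preceding Lemma (whose hypothesis is (SS3)) to get $\text{Cen}_G(\varepsilon)=N_G(K)$, recognising $\Q N\varepsilon$ as a crossed product $\Q(\xi_h)*(N/H)$ with faithful action thanks to (SS2), and then inducing via the orthogonality of the conjugates of $\varepsilon$ to obtain $M_n(\Q N\varepsilon)$---are exactly the ingredients used there. Your observation that faithfulness of $\sigma$ (coming from the maximal-abelian condition (SS2)) is precisely what makes the crossed product central simple over the fixed field, and hence forces $\varepsilon$ to be primitive in $\Q N$, is the right emphasis.

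Two small remarks. First, your part numbering drifts at the end: what you call ``part (5)'' (the explicit twisting) is still inside item (4) of the statement, and what you call ``part (6)'' is item (5). Second, in the step $\varepsilon\,\Q G\,\varepsilon=\Q N\varepsilon$ you use that $\varepsilon^{g}\varepsilon=0$ for $g\notin N$, whereas (SS3) literally gives $\varepsilon\varepsilon^{g}=0$; you should note that conjugating by $g^{-1}$ and using that $N$ is a subgroup yields the required symmetric statement. Neither point affects the correctness of the argument.
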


In \cite{BakshiKaur2} Bakshi and  Kaur  introduced the class of generalized strongly monomial groups. This is based on
generalized strong Shoda pairs of a finite group and leads to the class  generalized strongly monomial groups.
In addition to strongly monomial groups, the class  of generalized strongly monomial groups  also contains all subnormally monomial groups and, more generally,  the class of finite groups $G$ such that all subgroups and quotient groups of subgroups of $G$ satisfy the following property: either they abelian or they contain a noncentral abelian normal subgroup.  For the class of generalized strongly monomial finite groups the primitive central idempotents of its rational group algebra are described  as well as the corresponding  simple component associated to each generalized strong Shoda pair of $G$.
For other recent work on the topic of describing primitive central idempotents we refer to \cite{BakshiKaur1,BPK,BM1,BM2}.
For references on  applications of the description of primitive central idempotents and their corresponding simple components we refer the reader to  \cite{bookvol1,bookvol2}.

\section{ Rational Wedderburn decomposition}

Let $F$ be a field of characteristic different from $2$. Recall that an $F$-algebra $A$ is said to be a quaternion algebra over $F$ if there exists $a,b\in \U (F)$ such that
   $$A= 
		\quat{a,b}{F} = \frac{F\langle i, j\rangle}{(i^2=a,\ j^2=b,\ ij=-ji)} =F1 +Fi +Fj +Fk,$$
where $k=ij$.
Recall the norm map $N: \quat{a, b}{F}\rightarrow F$, defined by
  $x=x_0 +x_1 i + x_2 j + x_3 k \mapsto x\overline{x}$, where $\overline{x} =x_0 -x_1 i -x_2 j -x_3 k$,
(with $x_0, x_1, x_2,x_3 \in F$). The map $x\mapsto \overline{x}$ defines an involution on $\quat{a,b}{F} $, called the quaternion conjugation.

Note that $A=\quat{a, b}{F}$ is a simple algebra with center $F$ (i.e. it is a central simple $F$-algebra) and thus it is either a division algebra or it is isomorphic to $\text{M}_{2}(F)$. The following conditions are equivalent:
\begin{enumerate}
\item $A=\text{M}_2(F)$,
\item $N(x)=0$ for some $0\neq x\in A$,
\item $u^2=av^2+bw^2$ for some $0\neq (u,v,w)\in F^3$.
\end{enumerate}

\begin{definition} A simple finite dimensional rational algebra is said to be exceptional if it is one of the following types:
\begin{enumerate}
 \item[type 1:]  a non-commutative division algebra  other then a totally definite quaternion algebra $\quat{a,b}{F} $ over a number field $F$, that is, $F$ is totally real and $a,b<0$.
 \item[type 2:] a $2\times 2$-matrix ring over the rationals, a quadratic imaginary extension of the rationals or over a totally definite quaternion algebra over $\Q$.
\end{enumerate}
\end{definition}

Amitsur described the finite subgroups  that are contained in an exceptional simple component of type 1.
Unit groups of orders in such division algebras are a big unknown. The reader is referred to Kleinert's book on this topic \cite{KleinertBook}.
Note that, because of Dirichlet's unit theorem (see below)  and  a result of Kleinert, the exceptional simple components of type 2 are precisely those $\text{M}_2(D)$ for which an order $\mathcal{O}$ in $D$ has only finitely many units. 
Further,  all finite dimensional rational   non-commutative division algebras are of type 1 except those  for which the unit group of an order has a central subgroup  of finite index.

For a field $F$ and a finite dimensional semisimple rational algebra $A$, we denote 
by $r_{F}(A)$ the number  of simple Wedderburn components of $F\otimes_{\Q} A$.

\begin{theorem}
(Dirichlet's Unit Theorem) Let $F$ be a number field and assume that $F$ has $r$ real embeddings and $s$ pairs of complex non-real embedding. If $R$ is the ring of integers of $F$ then
 $$\U (R) =T\times A,$$
 where $T$ is a finite group formed by roots  of units in $F$ and $A$ is a free abelian group of rank $r+s-1$. 
 Note that this rank equals $r_{\R}(F)-r_{\Q}(F)$ and $F\otimes_{\Q}\R \cong \R^r \times \C^{s}$.
\end{theorem}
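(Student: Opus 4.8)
The plan is to prove this via Minkowski's geometry of numbers, using the logarithmic embedding of the unit group. Write $n=[F:\Q]=r+2s$ and let $\sigma_1,\dots,\sigma_r$ be the real embeddings of $F$ and $\tau_1,\dots,\tau_s$ a choice of one embedding from each complex-conjugate pair. First I would introduce the logarithmic map
$$L:\U(R)\rightarrow \R^{r+s},\quad L(u)=\bigl(\log|\sigma_1(u)|,\dots,\log|\sigma_r(u)|,\,2\log|\tau_1(u)|,\dots,2\log|\tau_s(u)|\bigr),$$
which is a group homomorphism from $(\U(R),\cdot)$ to $(\R^{r+s},+)$. The key elementary input is that every $u\in\U(R)$ has $N_{F/\Q}(u)=\pm1$, so the product of the absolute values of all archimedean conjugates equals $1$; taking logarithms (here the factor $2$ in the complex coordinates is exactly what is needed) shows that $L(u)$ lies in the hyperplane $H=\{x\in\R^{r+s}:\sum_i x_i=0\}$, which has dimension $r+s-1$.

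Next I would analyse the kernel and the discreteness of the image. The kernel of $L$ consists of the units all of whose archimedean conjugates have absolute value $1$; by the standard Kronecker argument (an algebraic integer all of whose conjugates lie on the unit circle is a root of unity) this kernel is exactly the finite group $T$ of roots of unity in $F$. For discreteness, any bounded region of $\R^{r+s}$ pulls back under $L$ to a set of units whose conjugates are all bounded in absolute value; since the coefficients of the minimal polynomials are then bounded rational integers, there are only finitely many such units. Hence $A:=L(\U(R))$ is a discrete subgroup of $H$, so it is a free abelian group of rank $m\le r+s-1$.

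It then remains to show $m=r+s-1$; this is the main obstacle and is where the geometry of numbers genuinely enters. The point to establish is that $A$ is a full lattice in $H$, equivalently that $L(\U(R))$ spans $H$ as a real vector space. The standard route is to apply Minkowski's lattice point theorem to $R$ embedded as a lattice in $\R^r\times\C^s\cong F\otimes_\Q\R$: by prescribing the sizes of the conjugates one produces infinitely many nonzero elements of bounded norm but with one archimedean coordinate much larger than a target direction; since only finitely many ideals have bounded norm, suitable ratios of these elements are units whose logarithmic vectors have a strictly dominant coordinate. Running this construction over all coordinates yields units whose $L$-images cannot lie in any proper subspace of $H$, forcing $m=r+s-1$. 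The hard part is precisely this nonvanishing/full-rank step; the discreteness and kernel computations are comparatively routine.

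Finally I would assemble the structure theorem. The map $L$ gives a short exact sequence $1\rightarrow T\rightarrow \U(R)\xrightarrow{L} A\rightarrow 0$ with $T$ finite and $A\cong\Z^{r+s-1}$ free abelian; since free abelian groups are projective, the sequence splits and $\U(R)=T\times A$, as claimed. The closing remark is then immediate: $F$ is itself the unique simple component of the $\Q$-algebra $F$, so $r_{\Q}(F)=1$, while $\R\otimes_\Q F\cong\R^r\times\C^s$ has $r+s$ simple components, whence $r_{\R}(F)-r_{\Q}(F)=(r+s)-1=r+s-1$ agrees with the rank of $A$.
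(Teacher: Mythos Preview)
Your proof sketch is the standard classical argument and is correct. Note, however, that the paper does not actually prove this theorem: it is a survey, and Dirichlet's Unit Theorem is quoted as a known input (the paper says explicitly in the introduction that only few proofs are included, and this result is simply stated in Section~5 to be used later). So there is no ``paper's own proof'' to compare against; you have supplied the textbook proof via the logarithmic embedding and Minkowski's lattice point theorem, which is exactly what one would find in any algebraic number theory reference. Your closing verification that $r_{\R}(F)-r_{\Q}(F)=(r+s)-1$ is also the intended reading of the paper's remark.
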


We recall some notions concerning the rational group algebra $\Q G$. Let $e_1,\dots,e_n$ be the primitive central idempotents of $\Q G$, then $$\Q G=\Q Ge_1\oplus \dots\oplus \Q G e_n,$$ where each $\Q G e_i$ is identified with the matrix ring $M_{n_i}(D_i)$ for some division algebra $D_i$. For every $i$, let   $\mathcal{O}_i$ be an order in $D_i$. Then $M_{n_i}(\mathcal{O}_i)$ is an order in $\Q Ge_i$. Denote by $\GL_{n_i}(\mathcal{O}_i)$ the group of invertible matrices in $M_{n_i}(\mathcal{O}_i)$.

Let $\mathcal{O}$ be an order in a finite dimensional rational division algebra $D$. 
Then
 $$\SL_n (\mathcal{O}) = \{ x\in \GL_n (\mathcal{O}) : \text{nr}(x)=1\},$$
where $nr$ is the reduced norm,
 and for  subset $I$ in $\mathcal{O}$ we put
    $$E(I)=\langle I+ xE_{lm} \mid  x\in I,\ 1\leq l,m\leq n_i,\ l\neq m,\ E_{lm} \textrm{ a matrix unit} \rangle  \subseteq \SL_n(\mathcal{O}).$$

\begin{theorem} (Bass-Vaser{\v{s}}te{\u\i}n-Liehl-Venkataramana) \label{elementary} \label{BassVaserstein} 
Let $\mathcal{O}$ be an order in a finite dimensional rational division algebra $D$. 
Assume that $n$ is an integer and $n\geq 2$. If the simple algebra $\text{M}_{n}(D)$ is not exceptional then
 $[\SL_{n}(\mathcal{O}):E(I)]<\infty$ for any non-zero ideal $I$ of $\mathcal{O}$.
\end{theorem}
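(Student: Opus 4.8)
The plan is to factor the statement through the congruence-subgroup machinery and isolate the one genuinely hard case. Since $\mathcal{O}$ is a $\Z$-order, it is finitely generated as a $\Z$-module, so any nonzero two-sided ideal $I$ has finite additive index and the residue ring $\mathcal{O}/I$ is finite. Hence the principal congruence subgroup
$$\Gamma(I) = \ker\bigl(\SL_n(\mathcal{O}) \rightarrow \SL_n(\mathcal{O}/I)\bigr)$$
already has finite index in $\SL_n(\mathcal{O})$. Each generator $1+xE_{lm}$ of $E(I)$ (with $x\in I$, $l\neq m$) reduces to the identity modulo $I$, so $E(I)\subseteq \Gamma(I)$, and the whole theorem collapses to the single estimate $[\Gamma(I):E(I)]<\infty$. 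I would record this reduction first, as it is the only entirely elementary step.

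The core is then $[\Gamma(I):E(I)]<\infty$, and here I would split on $n$. Note first that $M_n(D)$ can be exceptional only for $n=2$, so for $n\geq 3$ the hypothesis is automatic and $\SL_n(\mathcal{O})$ is a higher-rank arithmetic group. The strategy there is $K$-theoretic stability: by the normality theorem for elementary subgroups in rank at least two, $E(I)$ is normal in $\SL_n(\mathcal{O})$, and the quotient $\Gamma(I)/E(I)$ is governed by the relative reduced Whitehead group $SK_1(\mathcal{O},I)$. Its finiteness, together with the computation of the Mennicke-type symbols, is precisely the Bass--Milnor--Serre solution of the congruence subgroup problem, transported from rings of integers to orders in division algebras. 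Thus for $n\geq 3$ the result follows once one has normality of $E(I)$ and finiteness of $SK_1(\mathcal{O},I)$.

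For $n=2$ neither normality nor stability comes for free, and this is exactly where the non-exceptionality hypothesis enters. The goal is \emph{bounded elementary generation}: every element of $\Gamma(I)$ should be a product of a bounded number of matrices $1+xE_{12}$ and $1+yE_{21}$ with entries in a fixed power of $I$. The mechanism, due to Vaser{\v{s}}te{\u\i}n, Liehl and Venkataramana, exploits an infinite unit group: units of infinite order in $\mathcal{O}$ furnish a Euclidean-style division-with-remainder that lets one clear off-diagonal entries in boundedly many elementary steps, via an explicit commutator calculus. The hypothesis that $M_2(D)$ is not exceptional is exactly the statement that $\mathcal{O}^{\times}$ is infinite: by Dirichlet's Unit Theorem and Kleinert's analysis, $\mathcal{O}^{\times}$ is finite precisely when $D$ is $\Q$, an imaginary quadratic field, or a totally definite rational quaternion algebra, which are the excluded type-2 components.

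I expect the $n=2$ bounded-generation step to be the main obstacle by a wide margin; it is where the arithmetic of the order is used in an essential and delicate way. The failure for the exceptional components shows exactly why no formal argument can suffice: for $D=\Q$ the subgroup $E(2\Z)=\langle 1+2E_{12},\,1+2E_{21}\rangle$ is free of rank $2$ by the earlier Proposition, hence of infinite index in $\SL_2(\Z)$, so the finiteness must genuinely be extracted from the infinitude of $\mathcal{O}^{\times}$ rather than from the group-theoretic structure of elementary matrices alone.
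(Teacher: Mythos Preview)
The paper does not prove this theorem. It is stated as a named result attributed to Bass, Vaser{\v{s}}te{\u\i}n, Liehl and Venkataramana and then invoked as a black box in the proof of the Jespers--Leal theorem that follows; this is a survey (``an invitation''), and deep congruence-subgroup results of this type are cited, not derived. So there is no in-paper argument to compare your proposal against.

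That said, your outline tracks the actual literature accurately. The reduction to $[\Gamma(I):E(I)]<\infty$ via finiteness of $\mathcal{O}/I$ is correct and elementary. For $n\geq 3$ the $K$-theoretic route (normality of $E(I)$ and finiteness of the relative $SK_1$) is exactly the Bass--Milnor--Serre mechanism, and your observation that non-exceptionality is vacuous there is right. For $n=2$ you have correctly isolated the decisive input: the paper itself notes that the type~2 exceptional components are precisely the $M_2(D)$ for which an order in $D$ has finite unit group, so non-exceptionality is equivalent to $\U(\mathcal{O})$ being infinite, which is what drives the Vaser{\v{s}}te{\u\i}n--Liehl--Venkataramana arguments. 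Your counterexample with $E(2\Z)$ free of rank $2$ in $\SL_2(\Z)$ is exactly the obstruction the hypothesis is there to exclude.

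What you have written is a correct road map rather than a proof; the $n=2$ bounded-generation step you flag as ``the main obstacle'' is genuinely difficult and would not fit in a survey of this kind either. If the intent was to supply what the paper omits, you should be explicit that you are sketching the structure of the original arguments and give precise citations for each of the three named contributions, since the commutative, number-field, and division-algebra cases are handled by different papers.
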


In this section we restrict the type of $2\times 2$-matrices which can occur as simple components in the Wedderburn decomposition of $\Q G$ for finite groups $G$. We also give a classification of those finite groups which have a faithful exceptional $2\times 2$-matrix ring component (i.e. $G$ embeds naturally into the simple component).

Surprisingly,  if one assumes $\text{M}_2(D)$ to be an exceptional component of $\mathbb{Q}G$, then the possible  parameters $d$ (resp.  $(a,b)$) of   
$D= \Q(\sqrt{-d})$ (resp.\ $\left( \frac{a,b}{\Q}\right)$) are very limited. It was proven by Eisele, Kiefer and Van Gelder  
\cite{EKVG} 
that only a finite number of division algebras can occur and, moreover, the possible parameters have been  described.
Together with the results of B\"achle,  Janssens, Jespers,  Kiefer and  D. Temmerman in \cite{BJJKT}
one has the following result.

\begin{theorem}\label{possible exceptional components}
Let $G$ be a finite group and $e$ a primitive central idempotent of $\Q G$ such that $\Q G e$ is exceptional. Then
\begin{enumerate}
\item if $\Q Ge$ is of type 2 over a field $\Q(\sqrt{-d})$, then $d \in \{ 0, -1, -2 , -3 \}$, 
\item if $\Q Ge$ is of type 2 over a quaternion algebra $\left( \frac{a,b}{\Q}\right)$,\\ then $(a,b) \in \{ (-1,-1),( -1, -3), (-2,-5) \}$, 
\item if $G$ is cut, i.e. all central units are trivial,  and $\Q Ge \cong \text{M}_2\left( \frac{-1,-3}{\Q}\right)$ or $\Q Ge \cong \text{M}_2(\Q(\sqrt{-2}))$ 
then there exists another primitive central idempotent $e'$ such that $\Q G e' \cong \text{M}_2(\Q)$ or $\Q Ge' \cong \text{M}_2(\Q(i))$,
 \item there exists a primitive central idempotent $e$ of $\mathbb{Q}G$ such that $\mathbb{Q}Ge \cong \text{M}_2\left( \frac{-2,-5}{\Q}\right)$ if and only if $G$ maps onto $G_{240,90}$, (we refer to  the Small Groups Library of GAP
 for the definition of $G_{240,90}$),
\item if $G$ is solvable and cut, then $\Q Ge \ncong \text{M}_2 \left( \frac{-2,-5}{\Q}\right)$, 
\item if $G$ is cut, then $\Q G e$ cannot be of type 1. 
\end{enumerate}
\end{theorem}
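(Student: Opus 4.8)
The plan is to prove the six assertions in parallel, all resting on two structural reductions. First, since a primitive central idempotent $e$ of $\Q G$ is also a faithful primitive central idempotent of $\Q(G/S_G(e))$ (where $S_G(e)$ is the kernel of $g\mapsto ge$), I may replace $G$ by $G/S_G(e)$ and assume that $\Q Ge$ is a \emph{faithful} Wedderburn component; then $G$ embeds in $\U(\Q Ge)$, i.e.\ in $\GL_2(D)$ in the type-2 case or in $\U(D)$ in the type-1 case. Second, I record the observation that converts ``exceptional'' into a finiteness statement: by Dirichlet's Unit Theorem together with the result of Kleinert quoted above, a type-2 component $M_2(D)$ is exceptional exactly when an order of $D$ has finite unit group, which forces the center $F=Z(D)=\Q(\chi)$ to satisfy $r_\R(F)-r_\Q(F)=0$. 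As $F$ is an abelian number field (a subfield of $\Q(\xi_{|G|})$ by the Brauer splitting theorem), this means $F=\Q$ or $F$ is imaginary quadratic, which is precisely the shape of the type-2 definition and cuts the search in (1) and (2) down to finitely many centers.

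For (1) the complex component is $M_2(\C)$, so the faithful group is a finite subgroup of $\GL_2(\C)$, and I would invoke the classical classification of these (cyclic, generalized quaternion/binary dihedral, and the binary tetrahedral, octahedral and icosahedral groups, together with their extensions). For each I read off the character field $\Q(\chi)$ of the faithful $2$-dimensional irreducible character: the imaginary quadratic fields that survive the exceptionality constraint are $\Q(i)$, $\Q(\sqrt{-2})$ and $\Q(\sqrt{-3})$ (arising from the generalized quaternion and binary polyhedral groups), together with $\Q$, while the real field $\Q(\sqrt5)$ coming from the binary icosahedral group is discarded since $M_2(\Q(\sqrt5))$ has infinite unit group. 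This gives (1). For (2), where $D$ is a quaternion algebra of Schur index $2$ and the complex component is $M_4(\C)$ (a representation of symplectic type), I would instead compute the local Hasse invariants of the cyclotomic algebra $M_n(\Q(\xi_h)*(N/H))$ attached to the relevant strong Shoda pair, using the explicit crossed-product description stated earlier, and determine which totally definite rational quaternion algebras $\quat{a,b}{\Q}$ have small enough ramification to occur; the bound leaves exactly $\quat{-1,-1}{\Q}$, $\quat{-1,-3}{\Q}$ and $\quat{-2,-5}{\Q}$. This finite determination is the content of \cite{EKVG} and the computational heart of the argument.

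For the cut statements (3)--(6) I would use the characterization that $G$ is cut if and only if $\Q(\chi)$ is $\Q$ or imaginary quadratic for every irreducible character $\chi$, equivalently the ring of integers of every component center has finite unit group. Assertion (6) then follows by showing that a noncommutative division-algebra component over such a center must be a totally definite quaternion algebra, which is by definition not of type 1; here I would combine Amitsur's classification of the finite subgroups of division rings with the Benard--Schacher constraints on the local indices of a group-algebra component. For (3) I would argue directly on the crossed-product data producing $M_2\!\left(\frac{-1,-3}{\Q}\right)$ or $M_2(\Q(\sqrt{-2}))$: under the cut hypothesis the associated Shoda pair forces an accompanying quotient of $G$ whose rational algebra contributes $M_2(\Q)$ or $M_2(\Q(i))$. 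For (4) I would identify $G_{240,90}$ as the unique minimal group realizing $M_2\!\left(\frac{-2,-5}{\Q}\right)$ as a faithful component; the direction $(\Leftarrow)$ is the explicit verification that $\Q G_{240,90}$ has this component (hence so does $\Q G$ for any $G$ mapping onto $G_{240,90}$, since $\Q(G/N)$ is a quotient of $\Q G$), while $(\Rightarrow)$ shows any faithful realization factors through $G_{240,90}$. Finally (5) is immediate from (4) once one notes that $G_{240,90}$ is nonsolvable, so no solvable group can map onto it.

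The hard part will be the finite determination underlying (1), (2) and (4): turning the abstract bound ``only finitely many $D$ occur'' into the explicit parameter lists requires a delicate computation of local Schur indices for the cyclotomic algebras attached to all candidate groups, or an exhaustive machine-assisted search over the bounded family. Pinning down the exceptional status of the single algebra $\quat{-2,-5}{\Q}$ and its tie to $G_{240,90}$ is the most subtle point, and it is exactly where the arguments of \cite{EKVG} and \cite{BJJKT} do the real work.
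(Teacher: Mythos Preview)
The paper does not supply its own proof of this theorem: it is stated as a compilation of results from \cite{EKVG} (parts (1), (2), (4)) and \cite{BJJKT} (parts (3), (5), (6)), with the surrounding text simply attributing the statement to those references and adding the remark that fewer than sixty groups $Ge$ arise. There is therefore nothing in the paper to compare your argument against beyond the bare citations, and your proposal already defers to exactly the same two sources for the substantive work.

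That said, two technical points in your sketch deserve attention if you intend it as more than a pointer to the literature. First, the crossed-product/strong-Shoda-pair machinery you invoke for (2) is only available for strongly monomial groups, whereas the group $G_{240,90}$ relevant to (4) is nonsolvable (it has $\SL(2,5)$ as a composition factor), so the actual route in \cite{EKVG} is not via Shoda pairs but via bounding the order of a finite subgroup of $\GL_2(D)$ when $\U(\mathcal{O}_D)$ is finite, followed by an exhaustive search. Second, your argument for (6) is incomplete as written: if the character field $F=\Q(\chi)$ is imaginary quadratic then $F$ is \emph{not} totally real, so a noncommutative division component over such $F$ would be of type~1 by definition, not excluded from it. Ruling this out genuinely requires the Benard--Schacher constraints on Schur indices of cyclotomic algebras (which force the index to divide $2$ and the ramification to be compatible with the cut condition), as you gesture toward but do not carry out. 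These are refinements rather than errors, and since both you and the paper ultimately rest the claim on \cite{EKVG} and \cite{BJJKT}, your proposal is in line with the paper's treatment.
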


In the above theorem, also the groups $Ge$ that yield an exceptional simple component of type 2 can be described; there are less than 60 such groups.

All the fields and division algebras appearing in the previous theorem have the peculiar property to contain a Euclidean order $\mathcal{O}$ which 
therefore is maximal and unique up to conjugation.
This yields that also 
all the $2\times 2$-matrix algebras in the Theorem
have, up to conjugation, a unique maximal order, namely $\text{M}_2(\mathcal{O})$. Recall that in case of $\Q(\sqrt{-d})$, with $ d \in \{ 0,1,2,3 \}$, the unique maximal order is their respective ring of integers $\I_d$ and in case of $\mathbb{H}_2, \mathbb{H}_3,\mathbb{H}_5$ the respective maximal orders can easily be described;
where \[\mathbb{H}_2 = \left( \frac{-1,-1}{\Q}\right),\quad \mathbb{H}_3 =
\left( \frac{-1,-3}{\Q}\right)
\quad \mbox{ and }\quad \mathbb{H}_5 = 
\left( \frac{-2,-5}{\Q}\right) . \] 
Recall that   a domain $R$ 
is said to be  a \emph{left Euclidean} ring if  there exists a map  $\delta$ from  $R\setminus \{ 0\}$ to the non-negative integers such that 
$$\forall\ a,b \in R \text{ with } b\neq 0, \exists\  q,r \in R: a= qb + r\text{ with } \delta(r) < \delta(b) \text{ or } r = 0;$$ 
and  $R$ is said to be a \emph{right Euclidean} ring 
if  there exists a map  $\delta$ from  $R \setminus \{0\}$ to the non-negative integers   such that 
 $$\forall\ a,b \in R \text{ with } b\neq 0, \exists\  q,r \in R: a= bq + r\text{ with } \delta(r) < \delta(b) \text{ or } r = 0.$$

\section{ Generators for a subgroup of finite index}

Let $G$ be a finite group. We know that $\Z G$ is an order in $\Q G$ and that $\Z G$ only has trivial idempotents.

\begin{lemma}
 Let $K$ be a field extension of $\Q$ and let $e = \sum_{g\in G} e_g g \in KG$, with each $e_g\in K$.
 If $e^2 = e\not\in  \{0,1\}$ then $e_1$ is a rational number in the interval $(0, 1)$.
 \end{lemma}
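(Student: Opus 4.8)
The plan is to read off $e_1$ as a trace and then use that the trace of an idempotent matrix over a field of characteristic zero is a non-negative integer. Write $n=|G|$ and let $\rho\colon KG\rightarrow M_n(K)$ be the regular representation by left multiplication. As recalled above, the coefficient of $1$ is given by $e_1=T(e)=\frac{1}{n}\operatorname{tr}(\rho(e))$, so the whole statement reduces to understanding $\operatorname{tr}(\rho(e))$.

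First I would observe that $\rho$ is a faithful ring homomorphism, since $\rho(x)=0$ forces $x=x\cdot 1=0$. Hence $\rho(e)$ is an idempotent matrix with $\rho(e)\neq 0$ and $\rho(e)\neq I_n$, the two inequalities coming respectively from $e\neq 0$ and $e\neq 1$. Because $K$ has characteristic zero, the minimal polynomial of $\rho(e)$ divides $X^2-X=X(X-1)$, which has distinct roots; hence $\rho(e)$ is diagonalizable with every eigenvalue in $\{0,1\}$. Consequently $\operatorname{tr}(\rho(e))$ equals the number of eigenvalues equal to $1$, that is, the rank $r=\operatorname{rank}(\rho(e))$, a non-negative integer with $0\leq r\leq n$.

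Then I would convert these facts into the claimed conclusion. The equality $e_1=r/n$ holds in $K$, and since $r/n\in\Q\subseteq K$ we conclude that $e_1$ is in fact the rational number $r/n$. For the strict bounds: $\rho(e)\neq 0$ forces $r\geq 1$, hence $e_1>0$; and $\rho(e)\neq I_n$ forces $r\leq n-1$, because an idempotent of full rank $n$ would be invertible and an invertible idempotent equals $I_n$. Thus $e_1<1$, and therefore $e_1\in\Q\cap(0,1)$, as desired.

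There is essentially no serious obstacle here; the only points that need care are the faithfulness of $\rho$, so that $e\neq 0$ and $e\neq 1$ genuinely translate into $\rho(e)\neq 0$ and $\rho(e)\neq I_n$, and the use of characteristic zero to guarantee both the diagonalizability of $\rho(e)$ and that its matrix trace coincides with the integer $r$ as an element of $K$.
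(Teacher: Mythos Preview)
Your argument is correct and is the standard proof of this well-known fact: the key identity $e_1=\tfrac{1}{n}\operatorname{tr}(\rho(e))$ was already set up earlier in the paper, and your use of the diagonalizability of an idempotent matrix in characteristic zero to identify the trace with the rank $r\in\{1,\dots,n-1\}$ is exactly the intended mechanism. The paper states this lemma without proof, so there is nothing to compare against; your write-up fills the gap cleanly.
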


Now if $e_1 , \ldots , e_n$ are the primitive central idempotents of $\Q G$ then also $\sum_{i=1}^{n} \Z G e_i$ is an order in $\Q G$ that contains $\Z G$.
Their unit groups, however, do not differ a lot in size.
Indeed we have the following properties.

\begin{lemma}\label{ordersunits}
Let $A$ be a semisimple  finite dimensional rational algebra. Let $e_1, \ldots , e_n$ be the primitive central idempotents of $A$.
\begin{enumerate}
\item Every element of an order $\mathcal{O}$  in $A$ is integral over $\Z$.
\item The intersection of two orders of $A$ is again an order in $A$.
\item Every order of $A$ is contained in a maximal order of $A$, say $\mathcal{M}$. Furthermore, $\mathcal{M} =\sum_{i=1}^{n} \mathcal{M}e_i$ and each $\mathcal{M}e_i$ is a maximal order in $Ae_i$.
\item Suppose $\mathcal{O}_{1} \subseteq \mathcal{O}_{2}$ are two orders in $A$. Then
 \begin{enumerate}
  \item $u\in \mathcal{O}_{1}$ is invertible in $\mathcal{O}_{2}$ if $u^{-1}\in \mathcal{O}_{1}$.
  \item the index of the unit groups $(\U (\mathcal{O}_{2}) : \U (\mathcal{O}_{1}))$ is finite.
 \end{enumerate}
\end{enumerate}
\end{lemma}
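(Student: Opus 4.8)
The four assertions are standard facts about orders, and the plan is to establish them in sequence, each relying on the previous ones. For part (1), any $x\in\mathcal{O}$ generates a subring $\Z[x]\subseteq\mathcal{O}$; since $\mathcal{O}$ is a finitely generated module over the Noetherian ring $\Z$, its submodule $\Z[x]$ is finitely generated as well, and a ring that is finite as a $\Z$-module consists of integral elements, so $x$ is integral over $\Z$. For part (2), the intersection $\mathcal{O}_1\cap\mathcal{O}_2$ is clearly a subring containing $1$ and is a finitely generated $\Z$-module, being a submodule of the Noetherian module $\mathcal{O}_1$. It remains to see it is full: since $\mathcal{O}_1$ is finitely generated and $\Q\mathcal{O}_2=A$, clearing denominators in the finitely many generators of $\mathcal{O}_1$ yields a positive integer $m$ with $m\mathcal{O}_1\subseteq\mathcal{O}_2$, hence $m\mathcal{O}_1\subseteq\mathcal{O}_1\cap\mathcal{O}_2$; as $m\mathcal{O}_1$ still contains a $\Q$-basis of $A$, so does the intersection.

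For part (3), the key tool is the symmetric bilinear form $(x,y)\mapsto\operatorname{tr}(xy)$ afforded by the left regular representation of $A$, which is nondegenerate because the semisimple algebra $A$ is separable over the perfect field $\Q$. By part (1) every element of an order $\mathcal{O}$ is integral, so $\operatorname{tr}(\mathcal{O}\,\mathcal{O})\subseteq\Z$; consequently the dual lattice $\mathcal{O}^{\#}=\{x\in A:\operatorname{tr}(x\mathcal{O})\subseteq\Z\}$ is a full lattice with $\mathcal{O}\subseteq\mathcal{O}^{\#}$ and $[\mathcal{O}^{\#}:\mathcal{O}]<\infty$. Any order $\mathcal{O}'\supseteq\mathcal{O}$ then satisfies $\mathcal{O}\subseteq\mathcal{O}'\subseteq\mathcal{O}^{\#}$, since products in $\mathcal{O}'\mathcal{O}\subseteq\mathcal{O}'$ are integral and hence have integer trace; finiteness of $\mathcal{O}^{\#}/\mathcal{O}$ forces every ascending chain of orders above $\mathcal{O}$ to stabilize, yielding a maximal order $\mathcal{M}\supseteq\mathcal{O}$. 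For the decomposition, $\sum_i\mathcal{M}e_i$ is a finitely generated $\Z$-module, is a ring by orthogonality of the central idempotents $e_i$, contains $1=\sum_i e_i$ (as each $e_i\in\mathcal{M}e_i$) and contains $\mathcal{M}$ (writing $m=\sum_i me_i$); being an order containing the maximal order $\mathcal{M}$, it equals $\mathcal{M}$, and maximality of each $\mathcal{M}e_i$ in $Ae_i$ follows because enlarging a single summand would enlarge $\mathcal{M}$.

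For part (4)(a) there is nothing to do: if $u^{-1}\in\mathcal{O}_1$ then $u,u^{-1}\in\mathcal{O}_1\subseteq\mathcal{O}_2$, so $u\in\U(\mathcal{O}_2)$; in particular $\U(\mathcal{O}_1)\subseteq\U(\mathcal{O}_2)$, which is what makes the index in (b) meaningful. Part (b) is the real content. Since $\mathcal{O}_1\subseteq\mathcal{O}_2$ are full lattices of equal rank $\dim_{\Q}A$, the quotient $\mathcal{O}_2/\mathcal{O}_1$ is finite; let $m$ be its exponent, so that $m\mathcal{O}_2\subseteq\mathcal{O}_1$. The plan is to show that the reduction map $\U(\mathcal{O}_2)\to\mathcal{O}_2/m\mathcal{O}_2$, $u\mapsto u+m\mathcal{O}_2$, separates the cosets of $\U(\mathcal{O}_1)$. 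Indeed, if $u,v\in\U(\mathcal{O}_2)$ with $u\equiv v\pmod{m\mathcal{O}_2}$, then using $v^{-1}\mathcal{O}_2=u^{-1}\mathcal{O}_2=\mathcal{O}_2$ one computes $v^{-1}u-1=v^{-1}(u-v)\in m\mathcal{O}_2\subseteq\mathcal{O}_1$ and likewise $u^{-1}v-1\in m\mathcal{O}_2\subseteq\mathcal{O}_1$, so $v^{-1}u\in\U(\mathcal{O}_1)$ and hence $u\in v\,\U(\mathcal{O}_1)$. Therefore the number of cosets is at most $|\mathcal{O}_2/m\mathcal{O}_2|<\infty$, which gives the claim.

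The only genuinely non-formal step, and the point to handle with care, is the verification in part (b) that \emph{both} $v^{-1}u$ and its inverse $u^{-1}v$ land in $\mathcal{O}_1$; this is exactly where the choice of $m$ with $m\mathcal{O}_2\subseteq\mathcal{O}_1$ and the invariance $u^{-1}\mathcal{O}_2=\mathcal{O}_2$ (valid because $u$ is a unit of $\mathcal{O}_2$) are used. Everything else reduces to finiteness of lattice quotients and the nondegeneracy of the trace form, both of which are available over $\Q$.
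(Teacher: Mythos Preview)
Your treatment of (1)--(3) is correct and standard; the paper omits these proofs entirely, and your use of the nondegenerate trace form to bound every over-order inside the dual lattice $\mathcal{O}^{\#}$ is exactly the expected argument. Your proof of (4)(b) is essentially identical to the paper's: both pick $m$ with $m\mathcal{O}_2\subseteq\mathcal{O}_1$ and show that the reduction map $\U(\mathcal{O}_2)\to\mathcal{O}_2/m\mathcal{O}_2$ separates cosets of $\U(\mathcal{O}_1)$ by checking that $u\equiv v$ forces both $u^{-1}v$ and $v^{-1}u$ into $\mathcal{O}_1$.

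The one point worth flagging is (4)(a). You read the statement literally and correctly observe it is vacuous: if $u,u^{-1}\in\mathcal{O}_1\subseteq\mathcal{O}_2$ then of course $u\in\U(\mathcal{O}_2)$. But the paper's own proof makes clear that the printed statement has its subscripts transposed. What is actually being asserted (and what the paper proves) is the non-trivial converse: \emph{if $u\in\mathcal{O}_1$ and $u^{-1}\in\mathcal{O}_2$, then already $u^{-1}\in\mathcal{O}_1$}; equivalently, $\U(\mathcal{O}_1)=\mathcal{O}_1\cap\U(\mathcal{O}_2)$. The paper's argument is an index count: since $u\in\U(\mathcal{O}_2)$, left multiplication by $u$ is a bijection on $\mathcal{O}_2$, so $[\mathcal{O}_2:u\mathcal{O}_1]=[u\mathcal{O}_2:u\mathcal{O}_1]=[\mathcal{O}_2:\mathcal{O}_1]$; combined with $u\mathcal{O}_1\subseteq\mathcal{O}_1$ this forces $u\mathcal{O}_1=\mathcal{O}_1$, whence $u^{-1}\in\mathcal{O}_1$. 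You did not prove this version, so strictly speaking there is a gap relative to the intended lemma --- though not relative to the lemma as literally typeset.
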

\begin{proof}
We only prove part (4). 

(a) Let $u\in \mathcal{O}_{1}$ and assume $u^{-1}\in \mathcal{O}_{2}$. Using indices of additive subgroups, we get
$[\mathcal{O}_{2}:u\mathcal{O}_{1}] =[u\mathcal{O}_{2}:u\mathcal{O}_{1}] \leq [\mathcal{O}_{2}:\mathcal{O}_{1}]$. Hence, $u\mathcal{O}_{2}=\mathcal{O}_{1}$ and thus $u$ is invertible in $\mathcal{O}_{1}$. The converse is obvious.

(b) Since $\mathcal{O}_{2}$ is a free  $\Z$-module containing $\mathcal{O}_{1}$, they both have equal $\Z$-rank, say $n$. Thus the index of the addtive groups satisfies $[\mathcal{O}_{2}:\mathcal{O}_{1}]=m< \infty$. Hence,
$m\mathcal{O}_{2}\subseteq \mathcal{O}_{1}$. Suppose now that $u,v\in \U(\mathcal{O}_{2})$ such that $u+m\mathcal{O}_{2}=v+m\mathcal{O}_{2}$. Then $u^{-1} v-1 \in m\mathcal{O}_{2}\subseteq \mathcal{O}_{1}$ and thus $u^{-1} v\in \mathcal{O}_{1}$. Similarly, $v^{-1}u\in \mathcal{O}_{1}$. So, $u^{-1}v\in \U(\mathcal{O}_{1})$.
Hence, we have shown that $(\U (\mathcal{O}_{2}): \U(\mathcal{O}_{1}) \subseteq [\mathcal{O}_{2}:m\mathcal{O}_{2}]<\infty$.
\end{proof}

Hence, to compute a subgroup of finite index in $\U (\Z G)$ it is sufficient to construct for each primitive central idempotent $e_i$ of $\Q G$  units $u$ of $\U (\Z G)$, that belong to $(1-e_i)+\Z G e_i$, and they are such that all the units $ue_i$  generate a subgroup of finite index in $\U (\Z G e_i)$.
The next proposition shows that for the latter we have to describe units that contribute both  to a large subgroup of the center of $\U (\Z G e_i)$ and to a large subgroup of the units of reduced norm one in $\Z Ge_i$.

\begin{proposition}
Let $\mathcal{O}$ be an order in a simple finite dimensional rational algebra $A$. Then $\GL_{n}(\mathcal{O})$ contains a subgroup of finite index which is 
isomorphic to a subgroup of finite index in $\SL_{n}(\mathcal{O}) \times \U (R)$, where $R$ is the unique maximal order in the center of $A$.
\end{proposition}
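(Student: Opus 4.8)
The plan is to use the reduced norm to split $\GL_n(\mathcal{O})$, up to finite index, into its ``special linear part'' $\SL_n(\mathcal{O})$ and a piece coming from the center, which I would realise concretely by central scalar matrices. Adopting the conventions of the earlier definitions, write $A\cong M_n(D)$ with $D$ a division algebra (Wedderburn--Artin), put $F=Z(A)=Z(D)$, and regard $\mathcal{O}$ as an order in $D$, so that $M_n(\mathcal{O})$ is an order in $A$ and $\GL_n(\mathcal{O})=\U(M_n(\mathcal{O}))$. Since every element of an order is integral over $\Z$ (Lemma~\ref{ordersunits}(1)), the reduced norm of a unit of $M_n(\mathcal{O})$ and of its inverse both lie in $R$, so the reduced norm restricts to a group homomorphism $\text{nr}:\GL_n(\mathcal{O})\to \U(R)$ whose kernel is exactly $\SL_n(\mathcal{O})$. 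Thus there is a short exact sequence
$$1\longrightarrow \SL_n(\mathcal{O})\longrightarrow \GL_n(\mathcal{O}) \stackrel{\text{nr}}{\longrightarrow} N \longrightarrow 1,$$
with $N=\text{nr}(\GL_n(\mathcal{O}))\leq \U(R)$, and it remains to virtually split it.

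For the section I would use central scalars. The ring $Z(\mathcal{O})=\mathcal{O}\cap F$ is an order in $F$ contained in the maximal order $R$, so by Lemma~\ref{ordersunits}(4)(b) its unit group $\U(Z(\mathcal{O}))$ has finite index in $\U(R)$; in particular, by Dirichlet's Unit Theorem, $\U(Z(\mathcal{O}))$ is a finitely generated abelian group. I would then fix a torsion-free subgroup $V\leq \U(Z(\mathcal{O}))$ of finite index (e.g. its free part). For $\lambda\in F$ the scalar matrix $\lambda I_n$ is central in $A$, and if $m$ denotes the reduced degree of $A$ over $F$ (so $m=nd$ where $\dim_F D=d^2$) then $\text{nr}(\lambda I_n)=\lambda^{m}$. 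Because the $\lambda I_n$ are central, the map
$$\psi:\SL_n(\mathcal{O})\times V\longrightarrow \GL_n(\mathcal{O}),\qquad (x,\lambda)\mapsto x\,(\lambda I_n),$$
is a group homomorphism.

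Two verifications then finish the argument. First, $\psi$ is injective: an element of its kernel satisfies $x=\lambda^{-1}I_n\in\SL_n(\mathcal{O})$, whence $\lambda^{m}=\text{nr}(\lambda I_n)=1$; as $V$ is torsion-free this forces $\lambda=1$ and $x=I_n$. Hence $\psi$ identifies $\SL_n(\mathcal{O})\times V$ with its image. Second, this image has finite index in $\GL_n(\mathcal{O})$: it contains $\SL_n(\mathcal{O})=\psi(\SL_n(\mathcal{O})\times\{1\})$, and its image under $\text{nr}$ is $\{\lambda^{m}:\lambda\in V\}=V^{m}$, so the index in question equals $[N:V^{m}]$. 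Since $V$ has finite index in the finitely generated abelian group $\U(R)$, and raising a finitely generated abelian group to the $m$-th power yields a subgroup of finite index, $V^{m}$ has finite index in $\U(R)$, and a fortiori in $N$. Combining, $\psi(\SL_n(\mathcal{O})\times V)$ is a finite-index subgroup of $\GL_n(\mathcal{O})$ isomorphic to $\SL_n(\mathcal{O})\times V$, while $\SL_n(\mathcal{O})\times V$ has finite index in $\SL_n(\mathcal{O})\times \U(R)$ because $\U(Z(\mathcal{O}))\subseteq \U(R)$ and $[\U(R):V]<\infty$; this is exactly the asserted statement.

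The routine parts are the multiplicativity of the reduced norm and the computation $\text{nr}(\lambda I_n)=\lambda^{m}$. The real content, and the step I expect to be the crux, is showing that the scalar units alone capture the reduced-norm image up to finite index: this amounts to the finite-index comparison $[\U(R):\U(Z(\mathcal{O}))]<\infty$ of unit groups of orders (Lemma~\ref{ordersunits}(4)(b)) together with the finite generation supplied by Dirichlet's Unit Theorem, which guarantees that the $m$-th powers $V^{m}$ still form a finite-index subgroup. The minor technical point of passing to the torsion-free subgroup $V$ is what upgrades the virtual splitting from an isomorphism ``up to a finite kernel'' to an honest isomorphism onto a finite-index subgroup.
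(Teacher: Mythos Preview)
The paper states this proposition without proof (it is a survey and explicitly says ``only few proofs will be included''), so there is no proof in the text to compare against. Your argument is the standard one and is correct: use the reduced norm to obtain the short exact sequence with kernel $\SL_n(\mathcal{O})$, and virtually split it via central scalar matrices, invoking Lemma~\ref{ordersunits}(4)(b) and Dirichlet's Unit Theorem to control indices.

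Two small remarks. First, your opening reinterpretation is slightly loose: the proposition as written has $\mathcal{O}$ an order in the simple algebra $A$ and $n$ an arbitrary positive integer, whereas you write $A\cong M_n(D)$ and then ``regard $\mathcal{O}$ as an order in $D$'', thereby conflating the free parameter $n$ with the matrix size coming from Wedderburn--Artin and silently replacing the given order by one of the special form $M_n(\mathcal{O}')$. This is harmless up to commensurability (Lemma~\ref{ordersunits}(4)(b) again), but you should say so rather than identify the two. Second, you use that $Z(\mathcal{O})=\mathcal{O}\cap F$ is an order in $F$; this is true, but deserves a one-line justification: since $\mathcal{O}$ is a full $\Z$-lattice in $D$, some positive multiple of the ring of integers $R$ of $F$ lies in $\mathcal{O}\cap F$, so the latter is a full $\Z$-lattice and hence an order. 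With these two clarifications your proof is complete.
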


Let us now focus on the units of reduced norm one.
For this a crucial and well known lemma is the following.

\begin{lemma}\label{MatrixUnitsL}
Let $D$ be a finite dimensional rational division algebra and let $n$ be an integer with $n>1$. If $f$ is a non-central idempotent in $M_n(D)$ then there exist matrix units $E_{i,j}$, with $1\leq i,j\leq n$ (that is, $\sum_{i=1}^{n}E_{i,i} =1$ and $E_{i,j}E_{k,l}=\delta_{j,k}E_{i,l}$) such that 
  $$f=E_{1,1} + \cdots + E_{l.l},$$
with $0<l<n$. Moreover, $M_n(D)=M_n(D')$,  with $D'$ the centraliser of all $E_{i,j}$.

\end{lemma}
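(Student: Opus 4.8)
The plan is to realize $M_n(D)$ as the endomorphism ring of a finite dimensional right $D$-vector space and to read the matrix units off a basis adapted to the idempotent. Write $V=D^n$ for the space of column vectors, viewed as a right $D$-vector space by right multiplication on the entries; then left multiplication identifies $M_n(D)$ with $\End_D(V)$, the ring of right $D$-linear endomorphisms of $V$ (a basis computation shows this is an ordinary, not opposite, matrix identification). Under this identification $f$ becomes a $D$-linear idempotent, hence the projection of $V$ onto $W:=fV$ along $W':=(1-f)V$, giving a direct sum decomposition $V=W\oplus W'$ of right $D$-modules.

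First I would pin down the rank. Since $M_n(D)$ is simple, its only central idempotents are $0$ and $1$; hence $f$ being non-central is equivalent to $f\neq 0$ and $f\neq 1$, which forces both $W\neq 0$ and $W'\neq 0$. Setting $l=\dim_D W$, this yields $0<l<n$, exactly the range required in the statement.

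Next I would construct the matrix units from an adapted basis. Choose a $D$-basis $v_1,\dots,v_n$ of $V$ so that $v_1,\dots,v_l$ is a basis of $W$ and $v_{l+1},\dots,v_n$ a basis of $W'$, and define $E_{i,j}\in\End_D(V)=M_n(D)$ to be the $D$-linear map determined by $E_{i,j}(v_k)=\delta_{j,k}\,v_i$. A direct check gives $E_{i,j}E_{p,q}=\delta_{j,p}E_{i,q}$ and $\sum_i E_{i,i}=\id_V$, so $\{E_{i,j}\}$ is a complete set of matrix units. Because $f$ is the projection onto $W$, it fixes each $v_i$ with $i\le l$ and kills each $v_i$ with $i>l$, so $f$ acts precisely as $E_{1,1}+\cdots+E_{l,l}$, which is the desired normal form.

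Finally, for the statement about $D'$, I would invoke the standard structure theorem for a ring $S$ containing a complete set of $n\times n$ matrix units with $\sum_i E_{i,i}=1$: the centraliser $D'=\{s\in S:\ sE_{i,j}=E_{i,j}s\ \text{for all }i,j\}$ satisfies $D'\cong E_{1,1}SE_{1,1}$, and every element of $S$ is uniquely $\sum_{i,j}d'_{i,j}E_{i,j}$ with $d'_{i,j}\in D'$, so that $S=M_n(D')$. Applying this with $S=M_n(D)$ gives $M_n(D)=M_n(D')$, and since $E_{1,1}M_n(D)E_{1,1}\cong D$, the centraliser $D'$ is again a copy of $D$. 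The only step needing genuine care is the equivalence between "non-central" and "different from $0$ and $1$," which rests on the simplicity of $M_n(D)$; once the adapted basis is fixed, everything else is routine bookkeeping with the chosen matrix units.
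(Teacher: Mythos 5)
Your proof is correct. Note that the paper itself offers no proof of this lemma --- it is stated as ``crucial and well known'' (proofs of such facts are deferred to \cite{bookvol1}) --- so there is no in-paper argument to compare against; what you give is precisely the standard argument: identify $M_n(D)$ with $\End_D(D^n)$ for the \emph{right} $D$-vector space $D^n$ (your parenthetical care about not landing in the opposite ring is exactly the point that is usually glossed over), split $V=fV\oplus(1-f)V$, build matrix units from an adapted basis, and finish with the classical fact that a ring containing a complete set of $n\times n$ matrix units is the full matrix ring over the centraliser of those matrix units, which here is $E_{1,1}M_n(D)E_{1,1}\cong D$. All the individual steps check out: non-centrality of $f$ does force $f\notin\{0,1\}$ since $0$ and $1$ are central, giving $0<l<n$; the verification $E_{i,j}E_{p,q}=\delta_{j,p}E_{i,q}$ and $f=E_{1,1}+\cdots+E_{l,l}$ on the chosen basis is routine; and the uniqueness of the expression $s=\sum_{i,j}d'_{i,j}E_{i,j}$ with $d'_{i,j}=\sum_k E_{k,i}sE_{j,k}\in D'$ is the standard matrix-units lemma. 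Your argument in fact never uses that $D$ is finite dimensional over $\Q$, so it proves the statement for an arbitrary division ring, which is slightly more general than what the lemma asserts.
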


One can then prove the following result.  We first introduce some notation.
Let $A$ be a semisimple finite dimensional rational algebra such that $AG$ is semisimple. Let $R$ be an order in $A$ and let $x_1, \dots , x_m$ be a generating set  of $R$ as an $\Z$-module.
For a given set of idempotents $\mathcal{F}$ of $AG$ we put
  $$\text{GBic}^{\mathcal{F}}(RG) =\langle b(x_i g, f),\; b(f,x_i g) \mid f\in \mathcal{F}, \; g\in G, \; 1\leq i\leq m\rangle  .$$
If $R=\Z$ then we simply put
  $$\text{GBic}^{\mathcal{F}} (G).$$
If, furthermore,  $\mathcal{F}=\{ \widehat{g} \mid g\in G\}$ then we put
 $$\text{Bic}(G)$$
for this group.

\begin{theorem} (Jespers-Leal)
Let $G$ be a finite group and $R$ an order in a semisimple finite dimensional algebra $A$. Assume $AG$ is semisimple, $e$ is a primitive central idempotent of $AG$ and $\mathcal{O}$ is an order in $AGe$. Assume the simple component $AGe$ is not exceptional. If $f$ is an idempotent of $AG$ such that $ef$ is non-central (in $AGe$) then $\text{GBic}^{\{e\}}(RG)$ contains a subgroup of finite index in the reduced norm one elements of $1-e+\mathcal{O}$.
\end{theorem}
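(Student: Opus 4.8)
The bicyclic units generating the group in question are the $b(x_ig,f)$ and $b(f,x_ig)$ attached to the non‑central idempotent $f$, and the plan is to push everything into the simple component $AGe\cong M_n(D)$ and there recognise these units as elementary matrices, so that Theorem~\ref{BassVaserstein} applies. First I would observe that since $ef$ is a \emph{non-central} idempotent of $AGe$ we must have $n\geq 2$, because $M_1(D)=D$ is a division algebra and has no non-central idempotents. Applying Lemma~\ref{MatrixUnitsL} to $ef$ produces matrix units $E_{ij}$ of $AGe$ with $ef=E_{11}+\cdots+E_{ll}$, $0<l<n$, and $AGe=M_n(D')$, where $D'$ is the centraliser of the $E_{ij}$. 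Fix an order $\Gamma$ in $D'$; then $M_n(\Gamma)$ is an order in $AGe$ whose reduced-norm-one units are, by Lemma~\ref{ordersunits}(4), commensurable with those of $\mathcal{O}$, so it suffices to locate a finite-index subgroup of the reduced-norm-one units of $M_n(\Gamma)$ inside the projection of $\text{GBic}^{\{f\}}(RG)$ onto $AGe$.

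Next I would compute that projection. Writing $\pi_e$ for multiplication by the central idempotent $e$ and using centrality of $e$, for every $\beta=x_ig$ one finds
$$\pi_e(b(\beta,f))=e+n_f^2\,(e-ef)(\beta e)(ef),\qquad \pi_e(b(f,\beta))=e+n_f^2\,(ef)(\beta e)(e-ef).$$
These are unipotent (hence reduced-norm-one) units supported in the two opposite corners $(e-ef)\,AGe\,(ef)$ and $(ef)\,AGe\,(e-ef)$; in the matrix-unit coordinates they are block-elementary matrices whose row- and column-supports are the \emph{disjoint} sets $\{l+1,\dots,n\}$ and $\{1,\dots,l\}$. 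As $\beta=x_ig$ runs over the $\Z$-spanning set of $RG$, the elements $\beta e$ span the order $RGe$ of $AGe$, and applying the surjective linear map $x\mapsto(e-ef)x(ef)$ shows that the $(e-ef)(\beta e)(ef)$ span a \emph{full} $\Z$-lattice $L$ in the corner $(e-ef)\,AGe\,(ef)$, which is non-zero because $0<l<n$. Since the row- and column-supports there are disjoint, products of two such nilpotents vanish, so these generators contribute exactly the abelian group $\{\,1+N:N\in L\,\}$.

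The heart of the argument is the passage from these two ``rectangular'' abelian subgroups to a full elementary group. Because $L$ is a full lattice in $(e-ef)\,AGe\,(ef)\cong M_{(n-l)\times l}(D')$, it contains $m\,M_{(n-l)\times l}(\Gamma)$ for some non-zero integer $m$; hence the image of $\text{GBic}^{\{f\}}(RG)$ contains every elementary matrix $e_{pq}(a)$ with $p>l\geq q$ and $a$ in the non-zero ideal $m\Gamma$, and symmetrically every $e_{pq}(a)$ with $q>l\geq p$. Forming Steinberg commutators $[e_{pq}(a),e_{qr}(b)]=e_{pr}(ab)$ of a lower-block with an upper-block generator fills the remaining (diagonal-block) positions, and iterating gives $e_{pq}(a)$ for \emph{all} $p\neq q$ with $a$ ranging over a product of such ideals; this product is again a non-zero ideal $J$ of $\Gamma$ precisely because an order in a division algebra is a domain. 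Thus $\pi_e\big(\text{GBic}^{\{f\}}(RG)\big)$ contains $E(J)\subseteq\SL_n(\Gamma)$. This lattice-and-commutator bookkeeping, ensuring both that every matrix position is reached and that $J$ stays non-zero, is the step I expect to be the main obstacle; the rest is idempotent manipulation together with the two quoted finiteness theorems.

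Finally, since $AGe=M_n(D')$ is not exceptional and $n\geq 2$, Theorem~\ref{BassVaserstein} gives $[\SL_n(\Gamma):E(J)]<\infty$. Combined with the commensurability (Lemma~\ref{ordersunits}) of the reduced-norm-one units of $M_n(\Gamma)$ and of $\mathcal{O}$, the subgroup $E(J)$ meets the reduced-norm-one units of $\mathcal{O}$ in a subgroup of finite index in the latter. Identifying $AGe$ with $1-e+AGe\subseteq AG$ via $x\mapsto 1-e+x$ (which preserves reduced norm) turns this into the asserted finite-index subgroup inside $\text{GBic}^{\{f\}}(RG)$, completing the proof.
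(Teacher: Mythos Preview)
Your overall strategy matches the paper's exactly: pass to the simple component via Lemma~\ref{MatrixUnitsL}, recognise the bicyclic units as block-elementary matrices, fill in the remaining positions by Steinberg commutators, and invoke Theorem~\ref{BassVaserstein}. There is, however, a genuine gap in your final paragraph. What you actually establish is that the \emph{projection} $\pi_e\bigl(\text{GBic}^{\{f\}}(RG)\bigr)$ contains $E(J)$; you then assert that ``identifying $AGe$ with $1-e+AGe$'' yields $1-e+E(J)\subseteq\text{GBic}^{\{f\}}(RG)$. That inference is unjustified: a preimage in $\text{GBic}^{\{f\}}(RG)$ of an elementary matrix in $AGe$ will in general have nontrivial projections to the other Wedderburn components, so it need not lie in $1-e+\mathcal{O}$ at all. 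Containing a large subgroup after projection is strictly weaker than containing it before.

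The paper closes this gap \emph{before} projecting rather than after. Your own observation that the $b(\beta,f)$ generate the full abelian group $\{\,1+n_f^{2}(1-f)\alpha f:\alpha\in RG\,\}$ is the key: since $n_e e\in RG$, one may take $\alpha=(n_e e)\gamma$ with $\gamma\in RG$, and then by centrality of $e$ the resulting element $1+n_e n_f^{2}(1-f)\gamma f\,e$ already lies in $(1-e)+AGe$, i.e.\ is the identity in every component except the $e$-th. These elements still sweep out a full lattice in the corner $(e-ef)AGe(ef)$, so your commutator argument now produces genuine elements $1-e+e_{pq}(a)\in\text{GBic}^{\{f\}}(RG)$ rather than mere $e$-projections, and the appeal to Theorem~\ref{BassVaserstein} finishes the proof. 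With this one adjustment your argument is correct and essentially identical to the paper's.
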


\begin{proof} 
Let  $n_{f}$ be the minimal positive integer such that $n_{f}f\in R G$.
Let $x_{1}, \ldots , x_{m}$ be a generating set of $R$ as a $\Z$-module.
As $AG e =M_{n}(D)$, for some division algebra $D$,
by Lemma~\ref{MatrixUnitsL} there is
a set of matrix units $\{ E_{i,j}  :  1\leq i,j \leq n\}$ of $AG e$ with $f=E_{1,1}+\dots+E_{l,l}$  for some $0<l<n$.
Recall from Lemma~\ref{ordersunits} that the unit groups of two orders in $AG e$ are commensurable.
Hence, without loss of generality, we may assume that  $M_{n}(\mathcal{O} )$ is the order chosen in the statement, with $\mathcal{O}$ an order in $D$.
Let $J=\text{GBic}^{\{ f\}} (R G)$. Note that
  $$ \left[  1 + n_{f}^{2}fx_{i}g(1-f) \right]^{k}
     \left[  1 + n_{f}^{2} f x_{j}h (1-f) \right]^{l}=
     \left[  1 + n_{f}^{2} f(kx_{i}g+ lx_{j}h) (1-f) \right],
  $$
for every $k,l \in \Z$, $g,h \in G$ and $1\leq i,j\leq m$. So, the group generated by
these units contains all elements of the form
   $$ 1+n_{f}^{2} f \alpha (1-f), \quad \hbox{\rm and} \quad 1+n_{f}^{2} (1-f) \alpha f,$$
with $\alpha \in R G.$

Since
  $$ \left\{ 1+n_{f}^{2}f \alpha (1-f), \; 1+n_{f}^{2} (1-f) \alpha f : \alpha \in R G \right\} \subseteq J, $$
it follows that
  $$ \left\{ 1+n_{e}n_{f}^{2} f \alpha (1-f) e, \; 1+n_{e}n_{f}^{2} (1-f) \alpha f e : \alpha \in R G
    \right\} \subseteq J .$$

Let $i\leq l$ and  $l+1 \geq j \geq n$. Then,
 $$f \mathcal{O} E_{i,j}(1-f)e = \mathcal{O} E_{i,j}.$$
Hence, as ${\cal O}$ is a finitely generated $\Z$-module,
there exists a positive integer $n_{i,j}$ such that
$$1+n_{i,j} \mathcal{O}  E_{i,j} \subseteq J \cap \SL_n(\mathcal{O}).$$
And similarly,
  $$ 1+n_{j,i} \mathcal{O} E_{j,i} \subseteq J \cap \SL_n(\mathcal{O}), $$
for some positive integer $n_{j,i}$.

So we have shown the existence of a positive integer $x$ with
  $$ 1+x\mathcal{O} E_{i,j} \subseteq J \cap \SL_n(\mathcal{O}) \quad \hbox{\rm and} \quad
     1+x\mathcal{O} E_{j,i} \subseteq J \cap \SL_n(\mathcal{O}),$$
for all $1\leq i \leq l$ and $l+1\leq  j \leq n$.

Now let $1\leq i,j \leq l,$ $i\neq j$ and $\alpha \in \mathcal{O}.$
Then one easily verifies that
  $$ 1+x^{2}\alpha E_{i,j} = ( 1+x\alpha E_{i,l+1},\; 1+xE_{l+1,j}) \in J \cap \SL_n(\mathcal{O}).$$

Similarly, for $l+1\leq i,j \leq n_{i}$, $i\neq j$, it follows that
  $$ 1+x^{2}\mathcal{O} E_{i,j} \subseteq J \cap \SL_n(\mathcal{O}). $$
Because of the assumptions, the  result now follows from Theorem~\ref{BassVaserstein}.
\end{proof}

The next step is to construct in a simple component $\Q Ge$ a non-central idempotent. This can be done if $Ge$ is not fixed point free and one can show that this can be done with an idempotent of the type $\widehat{g} e$.  Recall that a finite group is said to be {\it fixed point free} if it has an (irreducible) complex representation $\rho$ such that $1$ is not an eigenvalue of $\rho (g)$ for all $1\neq g\in G$. Such groups show up naturally, as every non-trivial finite subgroup of a division algebra is fixed point free.

Indeed, Let $e$ be  a primitive central  idempotent of $\Q (\xi )G$ with $Ge$ not commutative and 
 $Ge$  not fixed point free.
Thus, there exists a primitive central idempotent $e_{1}$ of $\C G e$
such that the non-linear
 complex representation
$\rho : G \rightarrow (\C G)e_{1}$ mapping $x$ onto $xe_{1}$ has eigenvalue $1$ for some $\rho (g)$, with $g\in G$ and $ge_{1}\neq e_{1}$.
Since $\rho (g)$ is
diagonalizable one  may assume that
$$\rho(g) = \pmatriz{{cc} I_j & 0 \\ 0 & D} \text{ with } 1\le j < n \text{ and } D=\diag(\xi_{j+1},\dots,\xi_n)$$
and $\xi_{j+1},\dots,\xi_n$ are roots of unity different from $1$.
Consequently
$$\rho ({\widehat{g} } ) = \pmatriz{{cc} I_j & 0 \\ 0 & 0}.$$
Hence $\widehat{g} e_{1}$ is a  non-central idempotent of $\C G$.
It follows that  $\widehat{g}e$ is a non-zero idempotent in $\Q (\xi )
Ge.$
Furthermore $\widehat{g} e\neq e$, because otherwise
$\widehat{g} e_{1} = \widehat{g} e_{1} e = e_{1}e =e_{1}$, a contradiction.

Now it remains to find units that cover the center of $\U (\Z G)$. This is done via the following beautiful result of Bass-Milnor. 
Let $\mathcal{O}$ be an order in a semisimple finite dimensional rational algebra $A$ and let $G$ be a finite group. Then  the natural images  of the units of $Z(\mathcal{O})C$, 
where  $C$ runs through the  cyclic subgroups of $G$, give a subgroup of finite index in $K_{1}(Z(\mathcal{O}G))$.  Now another beautiful result of Bass-Milnor says that 
the Bass units $u_{k,m}(\xi^{i} g)$ generate a subgroup of finite index in $\U(\Z [\xi] \langle g \rangle)$.
One knows even specific Bass units that are a basis of free abelian subgroup of finite index..

All the above mentioned results then give the following result.

\begin{theorem}
Let $G$ be a finite group and $\xi$ a root of unity. Suppose that $\Q (\xi ) G$ does not have exceptional simple components. Let $\mathcal{C}=\{ \widehat{g} \mid g\in G\}$.
Suppose that for every primitive central idempotent $e$ of $\Q G$ the group $Ge$ is not fixed point free. Then
 $$\langle \text{GBic}^{\mathcal{C}} (\Z [\xi ] G) \cup \text{Bass }(\Z [\xi ] G) \rangle$$
 is of finite index in $\U (\Z[ \xi ]G)$.
\end{theorem}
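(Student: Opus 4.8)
The plan is to show that the group generated by generalized bicyclic units (with respect to $\mathcal{C}$) together with Bass units over $\Z[\xi]$ surjects, up to finite index, onto the unit group of each Wedderburn component $\Q Ge$, and then to assemble these local statements into a global finite-index assertion. By Lemma~\ref{ordersunits}(4), the unit group of $\Z[\xi]G$ and the unit group of $\sum_i \Z[\xi]Ge_i$ (summing over the primitive central idempotents $e_i$ of $\Q(\xi)G$, equivalently grouping those lying over a fixed primitive central idempotent of $\Q G$) are commensurable, so it suffices to produce, for each primitive central idempotent $e$ of $\Q(\xi)G$, a subgroup of finite index in $\U(\Z[\xi]Ge)$ inside the stated generating set. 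I would split $\U(\Z[\xi]Ge)$ into its reduced-norm-one part and its central part, invoking the Proposition (on $\GL_n(\mathcal{O})$ being commensurable with a subgroup of $\SL_n(\mathcal{O})\times\U(R)$, $R$ the maximal order in the center) to reduce to handling these two pieces separately.

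For the reduced-norm-one part, first I would dispose of the commutative components: if $Ge$ is commutative then $\Q(\xi)Ge$ is a field and there are no reduced-norm-one units of infinite order to capture. So assume $Ge$ is noncommutative; by hypothesis $Ge$ is not fixed point free, so the construction displayed just before the theorem produces a non-central idempotent of the form $\widehat{g}e$ in $\Q(\xi)Ge$. At this point I would apply the Jespers--Leal theorem directly, with $A=\Q(\xi)$, $R=\Z[\xi]$, and $f=\widehat{g}$: since $\Q(\xi)Ge$ is not exceptional by assumption and $ef=\widehat{g}e$ is non-central, $\text{GBic}^{\{e\}}(\Z[\xi]G)$ contains a subgroup of finite index in the reduced-norm-one units of $1-e+\mathcal{O}$ for an order $\mathcal{O}$ in $\Q(\xi)Ge$. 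Because each such $f=\widehat{g}$ lies in $\mathcal{C}$, these units all belong to $\text{GBic}^{\mathcal{C}}(\Z[\xi]G)$, covering the $\SL$-part of every noncommutative component simultaneously.

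For the central part, I would invoke the two Bass--Milnor results quoted just before the statement: the central units of $\Z[\xi]G$ are captured, up to finite index in $K_1(Z(\mathcal{O}G))$, by units of $Z(\mathcal{O})C$ as $C$ ranges over cyclic subgroups, and in turn the Bass units $u_{k,m}(\xi^i g)$ generate a subgroup of finite index in $\U(\Z[\xi]\langle g\rangle)$. Tracing these central units through to their components shows that $\text{Bass}(\Z[\xi]G)$ contributes a finite-index subgroup of $\U(R)$ in each component's center. Combining the central contribution with the reduced-norm-one contribution, and using the Proposition to recombine $\SL_n(\mathcal{O})\times\U(R)$ back into $\GL_n(\mathcal{O})=\U(\mathcal{O})$ up to finite index, gives finite index in $\U(\Z[\xi]Ge)$ for every $e$; multiplying over the finitely many components and returning via Lemma~\ref{ordersunits}(4) to $\Z[\xi]G$ itself yields the claim. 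The main obstacle I anticipate is the bookkeeping of reduced norms: one must check that the generalized bicyclic units genuinely land in the reduced-norm-one subgroup and that the Bass units genuinely cover the central free-abelian rank predicted by Dirichlet's Unit Theorem in each component, so that the two finite-index pieces together account for the full rank of $\U(\Z[\xi]Ge)$ rather than leaving a positive-rank gap; this is exactly where the non-exceptionality hypothesis (through Theorem~\ref{BassVaserstein}) and the not-fixed-point-free hypothesis (to guarantee a non-central idempotent exists) are indispensable.
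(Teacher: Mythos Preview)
Your proposal is correct and follows precisely the route the paper intends: the paper does not give a standalone proof of this theorem but states it as a consequence of the preceding results (Lemma~\ref{ordersunits}, the Proposition splitting $\GL_n(\mathcal{O})$ into $\SL_n(\mathcal{O})\times\U(R)$ up to finite index, the Jespers--Leal theorem applied with the non-central idempotent $\widehat{g}e$ supplied by the non-fixed-point-free hypothesis, and the Bass--Milnor results for the central part), and you have assembled exactly these ingredients in the right order. One minor remark: the superscript in the Jespers--Leal conclusion should be $\{f\}$ rather than $\{e\}$ (a typo in the paper's statement, corrected in its proof), which you implicitly recognize when you note that $f=\widehat{g}\in\mathcal{C}$.
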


The result also implies that the unit group is finitely generated.
One has a much stronger result due to Siegel.

\begin{theorem}
Let $\mathcal{O} $ be an order in a finite dimensional semisimple rational algebra $A$. Then $\U (\mathcal{O})$ is finitely presented.
\end{theorem}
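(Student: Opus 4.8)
The plan is to reduce, via commensurability arguments, to the case of a maximal order in a simple algebra, then to identify $\U(\mathcal{O})$ with an arithmetic group and invoke the finite presentability of such groups.

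First I would record that finite presentation is a commensurability invariant: if $H$ is a finite-index subgroup of $G$, then $G$ is finitely presented if and only if $H$ is, a standard consequence of Reidemeister--Schreier rewriting. Combined with Lemma~\ref{ordersunits}, this reduces the problem drastically. Every order $\mathcal{O}$ sits inside a maximal order $\mathcal{M}$ with $[\U(\mathcal{M}):\U(\mathcal{O})]<\infty$ by part (4b), so it suffices to treat maximal orders. Writing $A=\prod_i A_i$ as a product of its simple components, part (3) gives $\mathcal{M}=\prod_i \mathcal{M}_i$ with each $\mathcal{M}_i$ maximal in $A_i$, so that $\U(\mathcal{M})=\prod_i \U(\mathcal{M}_i)$. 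Since a finite direct product of finitely presented groups is finitely presented, I am reduced to the case $A=M_n(D)$ simple with $\mathcal{O}=M_n(\mathcal{O}_D)$, i.e. to showing that $\GL_n(\mathcal{O}_D)$ is finitely presented, where $\mathcal{O}_D$ is a maximal order in the division algebra $D$.

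Next I would exhibit $\Gamma:=\U(\mathcal{O})$ as an arithmetic group. Let $F=Z(D)$ be the centre, a number field, and put $N=\dim_\Q A$. The left regular representation embeds $A$ into $\End_\Q(A)\cong M_N(\Q)$, and under this embedding the multiplicative group $A^{\times}=\GL_1(A)$ becomes a linear algebraic group $\mathbf{G}$ defined over $\Q$ (a $\Q$-form of a product of general linear groups, obtained by restriction of scalars from $F$); it is reductive, being an inner form of $\GL_{mn}$. The order $\mathcal{O}$ determines a $\Z$-lattice in $A$, and $\Gamma$ is precisely its stabiliser, that is, the group $\mathbf{G}(\Z)$ of integral points. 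Hence $\Gamma$ is an arithmetic subgroup of the reductive $\Q$-group $\mathbf{G}$.

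Finally I would invoke the geometric finiteness of arithmetic groups. The real points $\mathbf{G}(\R)$ act on an associated symmetric space $X$, a contractible manifold (concretely, realise $\mathbf{G}(\R)\subseteq \GL_N(\R)$ acting on the space of positive-definite symmetric forms), and $\Gamma$ acts on $X$ properly discontinuously. Reduction theory (Siegel sets, Borel--Harish-Chandra) furnishes a connected fundamental domain $D$ that is a finite union of Siegel sets and whose translates cover $X$, with the crucial property that $S=\{\gamma\in\Gamma : \gamma D\cap D\neq\emptyset\}$ is finite. One then applies the standard principle (Macbeath) that a group acting on a simply connected space with such a fundamental domain is generated by $S$ and is finitely presented, the defining relations being read off from the nonempty triple overlaps $\gamma_1 D\cap\gamma_2 D\cap D$. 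The main obstacle is precisely this reduction theory: producing a fundamental domain with $|S|<\infty$ requires controlling the behaviour of $\Gamma$ near the cusps, i.e. the noncompact directions of $\Gamma\backslash X$, which for $\GL_n(\Z)$ is classical Minkowski--Siegel reduction but in general needs the full Borel--Harish-Chandra machinery; once the fundamental domain is in hand, the passage from geometric finiteness to finite presentation is a formal topological argument.
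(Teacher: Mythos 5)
Your proposal is correct in outline, but there is nothing in the paper to compare it against: the paper states this theorem without proof, simply attributing it to Siegel (consistent with its announced policy of including only a few proofs). Your argument is the standard modern route to Siegel's result, and its first half meshes well with the paper's own toolkit: commensurability invariance of finite presentability together with Lemma~\ref{ordersunits} (parts (2), (3) and (4b)) lets you replace $\mathcal{O}$ by any convenient order, and the passage to $\prod_i M_{n_i}(\mathcal{O}_{D_i})$ is legitimate --- note only that a maximal order in $M_n(D)$ need not literally equal $M_n(\mathcal{O}_D)$, but since any two orders in the same algebra have commensurable unit groups this imprecision is harmless, and in fact you never needed maximality at all, only the product decomposition across the simple components. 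The second half --- realizing $\U(\mathcal{O})$ as the stabilizer of the lattice $\mathcal{O}$, hence as the group of integral points of the reductive $\Q$-group $\GL_1(A)$, and then invoking Borel--Harish-Chandra reduction theory plus Macbeath's presentation criterion on the contractible symmetric space --- is exactly how the result is proved in the literature (e.g.\ Borel's \emph{Introduction aux groupes arithm\'etiques}); Siegel's original treatment of unit groups of orders is the prototype of that reduction theory. What your write-up buys over the paper's bare citation is that it makes visible (i) that the theorem has nothing to do with group rings and holds for arbitrary orders, and (ii) where the real difficulty lies: in producing a finite union of Siegel sets with the finite-overlap property, not in the formal topological step from such a fundamental set to a finite presentation. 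Be aware, though, that your final appeal is essentially to the general theorem that ``arithmetic groups are finitely presented,'' of which the statement at hand is a special case; as a self-contained proof it remains a sketch, which is acceptable here precisely because the paper itself offers no proof.
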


We give some examples of finite $2$-groups $G$ such that the Bass units together with the bicyclic units do not generate a subgroup 
of finite index in $\U (\Z G)$.
The following result is due to Jespers and Parmenter.

\begin{theorem}
Let $D_8 =\langle a,b \mid a^4=1, \; b^2=1,\; ba=a^3 b \rangle$, the quaternion group of order $8$. Let $G$ be a finite $2$-group and suppose there exists an epimorphism  $f:G \rightarrow D_8$. If at least 
two of the elements $b,ab,a^2b,a^3b$ do not have preimages in $G$ of order $2$, then the Bass units together with the bicyclic units in $\Z G$ do not generate a subgroup of finite index in $\U (\Z G)$.

In particular, this applies to the groups $Q_{16}$, $\langle a,b \mod a^8=1, \; b^2=1, ba=a^3b\rangle$, $C_4 \rtimes C_2$ and $\left( \langle z \rangle_2 \times \langle a \rangle_4\right) \rtimes \langle b\rangle_2$, with $z$ central and $a^b=za$.
\end{theorem}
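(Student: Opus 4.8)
The goal is to exhibit, for any finite $2$-group $G$ admitting an epimorphism $f:G\to D_8$ satisfying the stated hypothesis, a unit in $\U(\Z G)$ (or rather a homomorphic image we can control) that is \emph{not} reached by the subgroup generated by Bass units and bicyclic units. The natural strategy is to push the whole problem down to $D_8$ via $f$, exploit the known structure $\U_1(\Z D_8)=B\rtimes D_8$ with $B$ free of rank $3$ (item (10) in the list of computed unit groups), and then produce a numerical obstruction --- an abelianised invariant --- that the images of Bass and bicyclic units cannot hit. First I would extend $f:G\to D_8$ linearly to a ring epimorphism $\Z G\to\Z D_8$, which restricts to a group homomorphism $\bar f:\U_1(\Z G)\to\U_1(\Z D_8)$. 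The key reduction is that $\bar f$ sends Bass units to Bass units and bicyclic units to bicyclic units (or to trivial units), because both constructions are functorial: $f(u_{k,m}(g))=u_{k,m}(f(g))$ and $f(b(h,\widetilde g))=b(f(h),\widetilde{f(g)})$ up to the collapse that occurs when $|f(g)|<|g|$. Hence it suffices to show that the images of \emph{all} Bass and bicyclic units already fail to generate a finite-index subgroup of $\U_1(\Z D_8)$, and in fact to show the obstruction survives the hypothesis on preimages.

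\textbf{Locating the obstruction in $\U_1(\Z D_8)$.} The heart of the matter is the free group $B$ of rank $3$ sitting inside $\U_1(\Z D_8)=B\rtimes D_8$. Passing to the abelianisation $\U_1(\Z D_8)/[\U_1(\Z D_8),\U_1(\Z D_8)]$, the free part contributes a free abelian group of rank (at most) $3$, and I would compute explicitly which classes the bicyclic units $b(h,\widetilde a)$ (the generators of $B$) occupy there. The bicyclic units of $\Z D_8$ are indexed by the four reflections $b,ab,a^2b,a^3b$ together with the cyclic subgroup $\langle a\rangle$; a nontrivial bicyclic unit $b(h,\widetilde a)$ exists precisely when $h$ is one of these reflections (since $\langle a\rangle$ is normal, the condition $h\notin N_G(\langle a\rangle)$ fails, so one must instead use $\widetilde{\langle b\rangle}$-type idempotents). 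The decisive point is that each such bicyclic unit, \emph{viewed in $\Z G$}, can only arise as the image under $\bar f$ of a bicyclic unit $b(\tilde h,\widetilde{\tilde a})$ of $\Z G$ when the preimage $\tilde h$ of the reflection $h$ genuinely behaves like an involution --- i.e.\ when $h$ \emph{does} have an order-$2$ preimage. If the reflection $h$ has no order-$2$ preimage in $G$, then every preimage $\tilde h$ has order $\geq 4$, and the bicyclic unit $b(\tilde h,\cdots)$ in $\Z G$ maps to a bicyclic unit whose abelianised class is forced (by the extra relation coming from $\tilde h^2\neq 1$) into a proper subgroup of the relevant free-abelian component.

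\textbf{The counting step.} With the image classes located, I would count dimensions: $B$ abelianises to rank $3$, so a finite-index subgroup of $\U_1(\Z D_8)$ must surject (up to finite index) onto all three free coordinates. The images of Bass units of $\Z G$ land in the part governed by the cyclic subgroups, and since $D_8$ has cyclic subgroups only of order $\le 4$ whose Bass units in $\Z D_8$ are torsion (by Lemma~\ref{BassTorsionL}, every $u_{k,m}(g)$ with $|g|\le 4$ satisfies $k\equiv\pm 1$), the Bass units contribute \emph{nothing} of infinite order to the relevant component. Thus the entire infinite-order contribution must come from bicyclic units. But the hypothesis that at least two of $b,ab,a^2b,a^3b$ lack order-$2$ preimages kills at least two of the four bicyclic generators in the sense above, leaving at most two usable reflections and hence image classes spanning a free-abelian subgroup of rank at most $2<3$. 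Therefore $\langle\text{Bass},\text{bicyclic}\rangle$ maps into a subgroup of infinite index in $B^{\mathrm{ab}}$, so it cannot have finite index in $\U_1(\Z D_8)$, and pulling back, $\langle\text{Bass},\text{bicyclic}\rangle$ has infinite index in $\U(\Z G)$.

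\textbf{Main obstacle.} The delicate point --- and the step I expect to be the crux --- is the precise bookkeeping of \emph{which} abelianised class each bicyclic unit of $\Z G$ maps to, and in particular verifying that an order-$\geq 4$ preimage $\tilde h$ really does force the image into a proper subgroup rather than accidentally recovering the full class. This requires the functoriality of the bicyclic construction together with a careful analysis of the relation $\widetilde{\langle a\rangle}$ collapses under $f$; the rank-$3$ freeness of $B$ and the torsion-ness of all $D_8$ Bass units are the two structural facts that make the final rank count $2<3$ decisive, but establishing that the two ``lost'' reflections genuinely cost a full unit of rank each is where the real work lies. The concrete four named groups $Q_{16}$, the group $\langle a,b\mid a^8=1,b^2=1,ba=a^3b\rangle$, $C_4\rtimes C_2$, and $(\langle z\rangle_2\times\langle a\rangle_4)\rtimes\langle b\rangle_2$ would then be checked one by one by listing the conjugacy classes of involutions and confirming the failure of order-$2$ preimages for at least two reflections.
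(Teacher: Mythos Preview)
Your overall strategy---push everything down to $\Z D_8$ via $f$, use the free rank-$3$ complement $V$ in $\U_1(\Z D_8)$, note that Bass units of $\Z D_8$ are torsion---matches the paper. But two steps are genuinely broken.

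\textbf{The rank count in the abelianisation is wrong.} A reflection $r$ with no order-$2$ preimage is not ``killed'': every bicyclic unit $b(\tilde a,\widetilde{\tilde r})$ with $f(\tilde r)=r$ still maps to a \emph{nontrivial power} $u_i^{\,c}$, where $c=|\tilde r|/2\ge 2$. So all four $u_i$ appear in $f(B)$, just not primitively, and in $V^{\mathrm{ab}}\cong\Z^3$ the images $d_1[u_1],d_2[u_2],d_3[u_3],d_4[u_4]$ (with $[u_4]=-[u_1]-[u_2]-[u_3]$) can easily span a rank-$3$ sublattice of finite index; your inequality ``rank $\le 2<3$'' does not follow. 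The paper's argument is different: the abelianisation only shows $f(B)$ is a \emph{proper} subgroup of $V$, and then Nielsen--Schreier is the decisive tool---a subgroup of index $n$ in a free group of rank $3$ is free of rank $2n+1$, so a proper subgroup generated by $\le 4$ elements must have infinite index.

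\textbf{The ``pulling back'' step is unjustified and is where the real work lies.} From ``$f(B)$ has infinite index in $\U_1(\Z D_8)$'' you cannot conclude that $B$ has infinite index in $\U(\Z G)$: it could be that $f(\U(\Z G))$ itself has infinite index in $\U_1(\Z D_8)$. What you actually need is that $f(B)$ has infinite index in $f(\U(\Z G))$, and for that you must show $f(\U(\Z G))$ is \emph{large}. The paper does this with an explicit lifting construction: writing $K=\ker f$ with $|K|=2^l$ and setting $V_i=V\cap\bigl(1+2^i\Z D_8(1-a^2)\bigr)$, one shows each $V/V_i$ is finite and then, for any $v=1+2^l\beta(1-a^2)\in V_l$, lifts $v$ to a unit $1+\widetilde{K}\beta_1(1-a_1^2)\in\U(\Z G)$ using the idempotent $\widehat{K}$ and the order $\Z G\widehat{K}\oplus\Z G(1-\widehat{K})$. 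This gives $V_l\subseteq f(\U(\Z G))$, hence $f(\U(\Z G))\cap V$ has finite index in $V$, and the infinite-index conclusion for $f(B)$ in $V$ transfers to $f(\U(\Z G))$. Without this lifting step your argument has no force.
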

\begin{proof}
The $\Z$-linear extension of $f$  to  a ring epimorphism $\Z G\rightarrow \Z D_{8}$, as well as the induced group
homomorphism $ \U (\Z G)\rightarrow \U (\Z D_{8})$, we also denote by $f$.

Since every Bass unit of $\Z D_{8}$ belongs to $D_8$, every Bass unit in $\Z G$ must map to an element of $D_{8}$.

Next consider a bicyclic unit $b(g,\widetilde{h})$ in $\Z G$.
Then either $f(b(g,\widetilde{h}))=1$ or
  $$f(b(g,\widetilde{h}) )= 1 +c(1-f(g))f(h)\widetilde{f(g)} = (1+(1-f(g))f(h)\widetilde{f(g)})^{c} ,$$
where $c=\frac{o(g)}{o(f(g))}$.

The bicyclic units of $\Z D_{8}$ are $u_{1}=b(a,\widetilde{b})$,
$u_{2}=b(a,\widetilde{ab})$, $u_{3}=b(a,\widetilde{a^2b})$ and $u_{4}=b(a,\widetilde{a^3 b})$.
Further $u_{4}=u_{3}^{-1}u_{2}^{-1}u_{1}^{-1}$. It is easily verified that
the given condition on $G$ yields that at least two of these bicyclic
units are not images of bicyclic units in $\Z G$.

It is known that
   $$V=\U (\Z D_{8}) \cap (1+\ker(\text{aug} ) (1-a^{2})) =\U (\Z D_{8}) \cap (1+\ker(\text{aug} ) (1-
       a))$$
is a normal complement
of the trivial units $\pm D_8$ and it is a free group of rank three, generated by the bicyclic units of the type $b(g,\widetilde{h})$.
Let $B$ be the subgroup of $\U (\Z G)$ generated by the Bass
units  and the bicyclic units of the type $b(g,\widetilde{h})$. Since $G$ is a $2$-group, it follows from the remarks above that $f(B)$ is a proper subgroup of
$V$ requiring at most $4$ generators.
Since $V$ is a free group of rank $3$, we conclude that $f(B)$ must be of infinite index in $V$.
Indeed, by the Nielsen-Schreier Theorem, if $f(B)$ has index $n$ in $V$ then $f(B)$ is free of rank $2n+1$.
As $f(B)$ is generated by at most $4$ elements, necessarily $n=1$ and hence $f(B)=V$, a contradiction.

For a positive integer $i$,  let $V_{i}$ denote the subgroup of $V$ consisting of those
units which can be written in the form $1+2^{i}\beta (1-a^{2})$ for some
$\beta \in \Z D_{8}$.
Because  $(1-a^2)^2=2(1-a^2)$, it follows that each $V_{i}\subseteq V$ .
Also note that for all $i$,  $V_{i}$ is a normal subgroup of $V$ and that
the groups      $V/V_{1}$ and $V_{i}/V_{i+1}$  are of exponent $2$ and thus abelian.
Since $\U (\Z D_8)$ is finitely generated, so is the group $V$.
Consequently, $V/V_{1}$ and all $V_{i}/V_{i+1}$ are finite.
So, each  $V/V_{i}$ is finite.

Let $K=\ker(f)$.
Obviously, $\lvert K\rvert =2^{l}$ for some $l\geq 1$.
We claim that $V_{l}\subseteq f(\U (\Z G))$.
Indeed, let $1+2^{l}\beta (1-a^{2})\in V_{l}$.
Choose $a_{1},\beta_{1}\in \Z G$ such that $f(a_1) =a$ and $f(\beta_1)=\beta$.
Put $u=1+\widetilde{K} \beta_{1} (1-a_{1}^{2})$.
Clearly $u\widehat{K} =\widehat{K}(1 +2^{l}\beta_1 (1-a_1^{2}))$ is a unit in $\Z G \widehat{K}\cong \Z D_{8}$.
Since  $u(1-\widehat{K}) =1-\widehat{K} $
is a unit in $\Z G (1-\widehat{K})$, we get that
$u\in \Z G$ is a unit in the order $\Z G \widehat{K} \oplus \Z G (1-\widehat{K})$.
Hence, because of Lemma~\ref{ordersunits},
$u\in \U (\Z G)$.
Obviously, $f(u)=1+2^{l}\beta (1-a^{2})$.
So, $u\in f(\U (\Z G))$ and the claim has been proved.

Suppose that $f(B)$ is of finite index in $f(\U (\Z G))$.
Since $f(B) \subseteq V$, this yields $f(B)$ is of finite index in $f(\U (\Z G))\cap V$.
Because $V_{l}\subseteq f(\U (\Z G))V$ and $V_l$ is of finite index in $V$, it follows that  $f(B)$ is of finite index in  $V$.
However this contradicts the earlier fact that  $f(B)$ is of infinite index in $V$.
Therefore, we have shown that $f(B)$ is of infinite index in $f(\U (\Z G))$.

To finish the proof we note that if $f(b(g,\widetilde{h}))\neq 1$ then it is a power of a bicyclic unit 
$b(\widetilde{f(h)},f(g))=1+(1+f(h))f(g)(1-f(h))$.
Since $b(\widetilde{f(h)},f(g))=b(f(g),\widetilde{a^{2}f(h)})$, we obtain that 
$f(\text{Bix} (G)) =f \left( \langle  b(g,\widetilde{h})\mid g,h\in G \rangle \rangle \right)$.
So, from the previous,
$f\left (\langle \text{Bix} (G) \cup \text{Bass} (G)\rangle \right)$ is of infinite index in $f(\U (\Z G))$
and thus $\langle \text{Bix} (G) \cup \text{Bass} (G)\rangle$ is of infinite index in $\U (\Z G)$.
\end{proof}

\section{Constructions of central units from Bass units}

We know that the central units of an order in a finite dimensional rational algebra form a finitely generated group.
As a consequence of Dirichlet's Unit Theorem and Lemma~\ref{ordersunits}, the rank of this group  also can be determined.

\begin{theorem}
Let $A$ be a finite dimensional semisimple rational algebra and $\mathcal{O}$ an order in $A$. Then
 $$\U (Z(\mathcal{O} )) = T\times F,$$
where $T$ is a finite group and $F$ is a free abelian group of rank $r_{\R}(A) -r_{\Q}(A)$. 

If $G$ is a finite group 
then for any finite field extension $K$ of $\Q$, $r_{K}(K G)$ is the number of irreducible $K$-characters of $G$ and it also equals the number of
Wedderburn components of $K G$.

Hence 
  $$Z(\U (\Z G)) = \pm Z(G) \times F,$$
where $F$ is a free abelian group of rank $r_{\R}(\R G)  - r_{\Q}(\Q G)$.

In particular, if $G$ is a finite abelian group of order $n$. Then $F$ has rank
 $$\frac{n+1+k_2 -2c}{2} = \sum_{d|n,\; d>2} k_d \left( \frac{\varphi (d)}{2} -1\right),$$
where $c$ is the number of cyclic subgroups of $G$ and $k_d$ is the number of cyclic subgroups of $G$ of order $d$.
\end{theorem}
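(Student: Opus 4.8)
The plan is to reduce everything to Dirichlet's Unit Theorem applied to the commutative semisimple algebra $Z(A)$. First I would observe that $Z(\mathcal{O})=Z(A)\cap \mathcal{O}$ is an order in $Z(A)$: it is a finitely generated $\Z$-module (a $\Z$-submodule of the order $\mathcal{O}$), and for any $z\in Z(A)$ some integer multiple $mz$ lies in $\mathcal{O}$ and hence in $Z(A)\cap\mathcal{O}$, so $Z(\mathcal{O})$ contains a $\Q$-basis of $Z(A)$. Writing $A\cong\prod_i M_{n_i}(D_i)$ by Wedderburn--Artin, the center is $Z(A)\cong\prod_i Z(D_i)$, a finite product of number fields $F_i$, whose maximal order is $\prod_i R_i$ with $R_i$ the ring of integers of $F_i$. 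Dirichlet's Unit Theorem gives $\U(R_i)=T_i\times A_i$ with $T_i$ finite and $A_i$ free abelian of rank $r_i+s_i-1=r_{\R}(F_i)-r_{\Q}(F_i)$. By Lemma~\ref{ordersunits} the group $\U(Z(\mathcal{O}))$ has finite index in $\prod_i\U(R_i)$, and since a finite-index subgroup of a finitely generated abelian group has the same torsion-free rank, I conclude $\U(Z(\mathcal{O}))=T\times F$ with $T$ finite and $F$ free abelian of rank $\sum_i(r_{\R}(F_i)-r_{\Q}(F_i))$.

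It then remains to identify this rank with $r_{\R}(A)-r_{\Q}(A)$. The key point is that for a field extension $K/\Q$ the number of simple components of the (semisimple, as the characteristic is $0$) algebra $K\otimes_{\Q}A$ equals that of its center $K\otimes_{\Q}Z(A)$, because each simple factor $M_n(D)$ has a field as center; hence $r_K(A)=r_K(Z(A))=\sum_i r_K(F_i)$. Applying this for $K=\R$ and $K=\Q$ and recalling (from Dirichlet's Unit Theorem as stated in the excerpt) that $r_i+s_i-1=r_{\R}(F_i)-r_{\Q}(F_i)$, the rank of $F$ telescopes to $r_{\R}(A)-r_{\Q}(A)$, which proves the first assertion.

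For the group-ring statements I would specialise to $A=\Q G$, $\mathcal{O}=\Z G$. The identity that $r_K(KG)$ counts the irreducible $K$-characters of $G$ is the standard fact that the number of simple components of the semisimple algebra $KG$ equals the number of inequivalent irreducible $K$-representations. To pass from $\U(Z(\Z G))$ to $Z(\U(\Z G))$ I would use that $\Z G$ is additively spanned by $G\subseteq\U(\Z G)$: a unit commuting with every unit commutes with every $g\in G$, hence with all of $\Z G$, so $Z(\U(\Z G))=\U(Z(\Z G))$. Its torsion subgroup is $\pm Z(G)$, since any central unit $u$ of finite order has $\text{aug}(u)=\pm1$, and whichever of $\pm u$ is normalised lies in $Z(G)$ by Berman's part of Lemma~\ref{IndependentFiniteSubgroup}. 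This yields $Z(\U(\Z G))=\pm Z(G)\times F$ with $F$ of rank $r_{\R}(\Q G)-r_{\Q}(\Q G)$.

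Finally, for $G$ finite abelian of order $n$ I would invoke Perlis--Walker, $\Q G\cong\prod_{d\mid n}\Q(\xi_d)^{k_d}$, so that $r_{\Q}(\Q G)=\sum_{d\mid n}k_d=c$. For the real count I would use that $\R\otimes_{\Q}\Q(\xi_d)$ is $\R$ for $d\in\{1,2\}$ and $\cong\C^{\varphi(d)/2}$ for $d>2$ (as $\Q(\xi_d)$ is totally complex for $d>2$), giving $r_{\R}(\R G)=1+k_2+\sum_{d>2}k_d\tfrac{\varphi(d)}{2}$. Subtracting produces $\sum_{d>2}k_d(\tfrac{\varphi(d)}{2}-1)$ directly, and to reach the closed form $\tfrac{n+1+k_2-2c}{2}$ I would apply the elementary identity $\sum_{d\mid n}k_d\varphi(d)=n$ (each cyclic subgroup of order $d$ contributes its $\varphi(d)$ generators). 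The main obstacle in the whole argument is the bookkeeping in the second paragraph---correctly matching Dirichlet's rank $r_i+s_i-1$ against the embedding counts $r_{\R}$, $r_{\Q}$ of the possibly non-Galois fields $F_i$---rather than any single deep step.
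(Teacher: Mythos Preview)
Your argument is correct and is precisely the route the paper indicates: the text states the theorem as ``a consequence of Dirichlet's Unit Theorem and Lemma~\ref{ordersunits}'' without giving a proof, and your proposal is exactly the natural unpacking of that sentence---passing to the order $Z(\mathcal{O})$ in the product of number fields $Z(A)\cong\prod_i F_i$, applying Dirichlet componentwise, invoking commensurability, and then specialising via Berman and Perlis--Walker. The bookkeeping you flag (matching $r_i+s_i-1$ with $r_{\R}(F_i)-r_{\Q}(F_i)$ and summing) is handled correctly.
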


A result of Artin says that 
if $G$ is a finite group then $r_{\Q}(\Q G)$, the number of irreducible $\Q$-characters of $G$, equals the number of conjugacy classes of cyclic subgroups of $G$.

As a consequence one obtains the following formula for the rank of the central units in a group ring.

\begin{corollary}
Let $G$ be a finite group. Then, the rank of $Z(\U (\Z G))$ is
 $$\frac{c+ c'}{2} -d,$$
where $c'$ is the number of conjugacy classes of $G$ closed under taking inverses and $d$ is the number of conjugacy classes of cyclic subgroups of $G$
\end{corollary}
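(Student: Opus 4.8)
The plan is to start from the theorem immediately preceding the corollary, which already gives $\operatorname{rank} Z(\U(\Z G)) = r_{\R}(\R G) - r_{\Q}(\Q G)$, and to evaluate the two terms separately. The term $r_{\Q}(\Q G)$ is handled at once by the quoted theorem of Artin: it equals the number of conjugacy classes of cyclic subgroups of $G$, which is $d$. Hence everything reduces to proving the identity $r_{\R}(\R G) = \frac{c+c'}{2}$, where $c = r_{\C}(\C G)$ is the number of irreducible complex characters of $G$ (equivalently the number of conjugacy classes of $G$) and $c'$ is the number of conjugacy classes of $G$ that are closed under taking inverses. I would emphasise here that the $c$ appearing in the corollary is this character/class count, not the count of cyclic subgroups used in the abelian special case earlier.

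To compute $r_{\R}(\R G)$ I would invoke Galois descent from $\C$ to $\R$. Since $\R G \otimes_{\R} \C \cong \C G$ and $\operatorname{Gal}(\C/\R)$ is generated by complex conjugation, the simple components of $\R G$ are in bijection with the orbits of complex conjugation acting on the simple components of $\C G$, that is, on $\mbox{Irr}(G)$ via $\chi \mapsto \overline{\chi}$. Concretely, a real simple component with real-valued character remains simple after extending scalars to $\C$ and corresponds to a fixed point of conjugation, whereas a component whose character is not real-valued splits over $\C$ into the conjugate pair $\{\chi, \overline{\chi}\}$. Thus $r_{\R}(\R G)$ is exactly the number of such conjugation orbits, and, crucially, this single count absorbs both the real-type and quaternionic-type components without any need to separate them by Frobenius--Schur indicator.

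The orbit count is then elementary once the number of fixed points is known. The fixed points are precisely the real-valued irreducible characters, and by the Frobenius--Schur theorem their number equals the number of real conjugacy classes, namely $c'$. The remaining $c - c'$ irreducible characters are non-real and are permuted in conjugate pairs, contributing $\frac{c-c'}{2}$ orbits. Summing gives $r_{\R}(\R G) = c' + \frac{c-c'}{2} = \frac{c+c'}{2}$, and combining with the first paragraph yields $\operatorname{rank} Z(\U(\Z G)) = \frac{c+c'}{2} - d$, as desired.

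I expect the only genuinely nontrivial ingredient to be the Frobenius--Schur count equating real-valued irreducible characters with inverse-closed conjugacy classes; the descent identification of real simple components with conjugation orbits is a standard argument, and Artin's theorem is quoted directly. The one subtle point requiring care is that the descent bijection must be set up at the level of orbits (so that every real-valued character is counted once regardless of whether its component is of real or quaternionic type), which is exactly what makes the clean formula $\frac{c+c'}{2}$ emerge.
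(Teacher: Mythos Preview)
Your argument is correct and is precisely the intended derivation: the paper states the corollary without proof, relying on the immediately preceding rank formula $r_{\R}(\R G)-r_{\Q}(\Q G)$ together with Artin's theorem for the second term, and your Galois-descent orbit count for $r_{\R}(\R G)$ is the standard way to obtain $\frac{c+c'}{2}$. One minor remark on attribution: the equality between the number of real-valued irreducible characters and the number of inverse-closed conjugacy classes is usually credited to Brauer (via Brauer's permutation lemma) rather than to Frobenius--Schur, but the mathematics is unaffected.
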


Ritter and Sehgal determined necessary and sufficient conditions for all central units to be trivial.
A proof relies on the following lemma.

The following notation is used.
Let $G$ be a finite group and $K$ a field. One says that two elements $g$ and $h$ of $G$ are {\it $K$-conjugate} in $G$ if there exists
 $r\in \U_{K}(n)=\{ r \in \Z_n \mid \sigma (\xi_n ) =\xi_n^r, \; \text{ for some } \sigma \in \text{ Gal}(K(\xi_n )/K) \}$  ($\xi_n$ a primitive 
 $n$-th root of unity in an extension of $K$)
such that $g$ and $h^r$ are conjugate in $G$; where $n$ is the exponent of $G$. This defines an equivalence relation 
$\sim_K$ in $G$. The equivalence class containing $g\in G$ is called the {\it $K$-conjugacy class} of $g$ in $G$ and it is denoted $g_{K}^{G}$.
The conjugacy class of $g$ in $G$ is simply denoted $g^G$. Hence,
 $$g_{K}^{G} =\cup_{r\in \U_K(n)}(g^r)^{G}.$$
Note that if  $K$ contains a primitive $n$-th root of unity, then $g_K^{G}=g^G$. Further note that
$g\sim_{\Q} h$ if and only if  $g$ is conjugate of $h^r$ in $G$ for some $r$ coprime with $n$; equivalently $\langle g \rangle$ is a conjugate of $\langle h \rangle$ in $G$. One can also easily verify that $g\sim_{\R} h$ if and only if $g$ is a conjugate of $h$ or $h^-1$, that is $g_{\R}^{G} =g^{G}\cup (g^{-1})^{G}$.

\begin{lemma}
Let $G$ be a finite group of exponent $n$ and let $g\in G$. Then $g_{\Q}^{G} =g_{\R}^{G}$ if and only if $g$ is conjugate to $g^m$ or $g^{-m}$ for every integer $m$ with $(m,n)=1$.
\end{lemma}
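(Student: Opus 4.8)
The plan is to reduce the biconditional to a direct unwinding of the two descriptions of $K$-conjugacy classes recorded immediately before the statement, namely $g_{\Q}^{G}=\bigcup_{(r,n)=1}(g^{r})^{G}$ and $g_{\R}^{G}=g^{G}\cup(g^{-1})^{G}$.

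First I would note that the inclusion $g_{\R}^{G}\subseteq g_{\Q}^{G}$ is automatic, since both $1$ and $-1$ are coprime to $n$, so the classes $g^{G}=(g^{1})^{G}$ and $(g^{-1})^{G}$ already occur among the $(g^{r})^{G}$ making up $g_{\Q}^{G}$. Consequently $g_{\Q}^{G}=g_{\R}^{G}$ holds if and only if the reverse inclusion $g_{\Q}^{G}\subseteq g_{\R}^{G}$ holds, and the whole proof comes down to analysing this one inclusion.

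Next I would rewrite $g_{\Q}^{G}\subseteq g_{\R}^{G}$ class by class. As $g_{\Q}^{G}$ is the union of the conjugacy classes $(g^{r})^{G}$ over all $r$ coprime to $n$, the inclusion holds exactly when each such $(g^{r})^{G}$ is contained in $g^{G}\cup(g^{-1})^{G}$. Since distinct conjugacy classes of $G$ are disjoint, the single class $(g^{r})^{G}$ can lie inside the union $g^{G}\cup(g^{-1})^{G}$ only by coinciding with one of the two; equivalently, $g^{r}$ is conjugate in $G$ to $g$ or to $g^{-1}$. Thus $g_{\Q}^{G}=g_{\R}^{G}$ is equivalent to the assertion that for every $r$ with $(r,n)=1$ the element $g^{r}$ is conjugate to $g$ or to $g^{-1}$.

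Finally I would translate this into the symmetric form appearing in the statement. Using that conjugacy is preserved under inversion (so $a$ is conjugate to $b$ if and only if $a^{-1}$ is conjugate to $b^{-1}$), the clause ``$g^{r}$ conjugate to $g^{-1}$'' becomes ``$g^{-r}$ conjugate to $g$'', i.e. ``$g$ conjugate to $g^{-r}$'', while ``$g^{r}$ conjugate to $g$'' is ``$g$ conjugate to $g^{r}$''. Writing $m=r$, the requirement over all $r$ coprime to $n$ reads precisely: $g$ is conjugate to $g^{m}$ or to $g^{-m}$ for every integer $m$ with $(m,n)=1$, which is the desired condition. I do not expect a genuine obstacle in this argument; the only point requiring care is the bookkeeping between the asymmetric form ``$g^{r}$ conjugate to $g^{\pm1}$'' and the symmetric form ``$g$ conjugate to $g^{\pm m}$'', together with the degenerate cases $n\le 2$, where $g=g^{-1}$ and both sides of the equivalence hold trivially.
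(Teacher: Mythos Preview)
Your argument is correct. The paper does not actually supply a proof of this lemma; it records the descriptions $g_{\Q}^{G}=\bigcup_{(r,n)=1}(g^{r})^{G}$ and $g_{\R}^{G}=g^{G}\cup(g^{-1})^{G}$ just before the statement and then leaves the lemma unproved, so there is nothing to compare against beyond confirming that your unwinding of those two descriptions is the intended (and essentially the only) route.
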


\begin{corollary} (Ritter and Sehgal)
For a finite group $G$ the following properties are equivalent.
\begin{enumerate}
\item $Z(\U (\Z G))$ is finite (or equivalently, $G$ is a cut group), i.e. all central units are trivial.
\item For every $g\in G$ and every integer $m$ with $(m,|G|)=1$ the elements $g^m$ and $g^{-m}$ are conjugate.
\end{enumerate}
\end{corollary}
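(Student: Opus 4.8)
The plan is to combine the rank formula for central units with the $K$-conjugacy characterization so that finiteness of $Z(\U(\Z G))$ becomes a purely combinatorial statement about conjugacy classes. By the theorem preceding this corollary, $Z(\U(\Z G)) = \pm Z(G) \times F$ with $F$ free abelian of rank $r_{\R}(\R G) - r_{\Q}(\Q G)$. Hence $Z(\U(\Z G))$ is finite if and only if this rank is zero, i.e. if and only if $r_{\R}(\R G) = r_{\Q}(\Q G)$. So the whole corollary reduces to showing that the equality of these two counts is equivalent to condition (2).

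First I would translate both invariants into counts of conjugacy classes. By Artin's result quoted in the excerpt, $r_{\Q}(\Q G)$ equals the number of conjugacy classes of cyclic subgroups of $G$; more conceptually, both $r_{\R}(\R G)$ and $r_{\Q}(\Q G)$ count $K$-conjugacy classes of $G$ for $K = \R$ and $K = \Q$ respectively, since $r_K(KG)$ is the number of irreducible $K$-characters and these are in bijection with the $K$-conjugacy classes. Because $\Q \subseteq \R$, the relation $\sim_{\R}$ refines $\sim_{\Q}$, so every $\Q$-class is a disjoint union of $\R$-classes and therefore $r_{\R}(\R G) \geq r_{\Q}(\Q G)$ always. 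Equality holds precisely when the two equivalence relations coincide on $G$, i.e. when $g_{\Q}^{G} = g_{\R}^{G}$ for every $g \in G$.

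At this point the Lemma immediately preceding the corollary does the decisive work: it states that $g_{\Q}^{G} = g_{\R}^{G}$ if and only if $g$ is conjugate to $g^m$ or to $g^{-m}$ for every integer $m$ coprime to $n = \exp(G)$. Running this over all $g \in G$ converts the equality of relations into exactly the arithmetic condition in (2), with one cosmetic point to check: condition (2) is phrased with $(m,|G|)=1$ rather than $(m,\exp(G))=1$. These give the same set of residues modulo $\exp(G)$ because $g^m$ depends only on $m \bmod |g|$ and $|g| \mid \exp(G) \mid |G|$ share the same prime divisors, so by a reduction modulo $\exp(G)$ one may replace $|G|$ by $n$ without changing which powers $g^m$ occur. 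Assembling these equivalences yields $Z(\U(\Z G))$ finite $\iff r_{\R}(\R G)=r_{\Q}(\Q G) \iff \sim_\R \text{ and }\sim_\Q\text{ agree} \iff$ condition (2).

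The main obstacle is really just the careful bookkeeping in the second step, namely justifying that $r_K(KG)$ counts $K$-conjugacy classes and that $\sim_\R$ refines $\sim_\Q$; once that is in place the cited Lemma supplies the heart of the argument and nothing further is needed. I would therefore spend most of the write-up making the count-of-classes interpretation precise and verifying the harmless $|G|$-versus-$\exp(G)$ substitution, and then quote the Lemma to close.
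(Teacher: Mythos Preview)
Your approach is correct and is exactly the derivation the paper intends: the corollary is stated without proof immediately after the Lemma, with the explicit remark that ``a proof relies on the following lemma,'' and your chain---rank formula $r_{\R}(\R G)-r_{\Q}(\Q G)=0$, then $\sim_{\R}$ coincides with $\sim_{\Q}$, then the Lemma---is the intended route.

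One point you glossed over deserves a sentence in the write-up. The Lemma yields the condition ``$g$ is conjugate to $g^{m}$ or to $g^{-m}$,'' whereas condition~(2) as printed says ``$g^{m}$ and $g^{-m}$ are conjugate.'' Taken literally the latter is merely ambivalence ($g\sim g^{-1}$ for all $g$, by setting $m=1$, and conversely ambivalence gives $g^{m}\sim (g^{m})^{-1}=g^{-m}$), and ambivalence does \emph{not} force $Z(\U(\Z G))$ to be finite: $Q_{16}$ is ambivalent but has a simple component with center $\Q(\sqrt{2})$, so its central unit group is infinite. Thus the printed (2) is a typo for the Lemma's condition (equivalently, ``$g^{m}$ is conjugate to $g$ or $g^{-1}$''), and that is what your argument actually proves. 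You should flag this rather than assert that the Lemma ``converts into exactly'' the printed condition.
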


Representation theoretically cut groups are those groups  such  that the character fields are either the rationals or a quadratic imaginary extensions over $\Q$.
So, for example, rational groups are cut. Recently, cut groups gained in interest, but especially the subclass of rational groups has already a long tradition in classical representation theory. We refer the reader to for example
\cite{Bachle,BMPcentral,CD}.

Also for strongly monomial groups one can determine a formula for the rank of the central units
and, with some restriction, one can determine an independent set of central units that generates a subgroup of finite index.

We have seen that for many finite groups the group generated by the Bass units and the bicyclic units generate a subgroup of finite index
in $\U (\Z G)$. In particular, the subgroup contains a subgroup of finite index in the center $Z (\U (\Z G))$. As the bicyclic units contain a subgroup that only ``contributes'' to a subgroup  of finite
index in reduced norm one subgroups of orders in the simple components, one might be tempted to think that the Bass units contain a subgroup of finite
index in the center of $\U (\Z G)$. Note, however, that Bass units in general are not central elements.

Jespers, Parmenter and Sehgal showed that for finite nilpotent groups the group generated by the Bass units contains a subgroup of finite index in the unit group of the center. To do so, one needs, in first instance,  a method to construct from a Bass unit a central unit.
Jespers, Olteanu, Van Gelder and del R\'io proved that this also can be done for the class of abelian-by-supersolvable groups $G$ such that every cyclic subgroup of order not a divisor of $4$ or $6$ is subnormal in $G$.
Obviously,  dihedral groups are  examples of such groups. Also  nilpotent finite  groups $N$ are examples.
Indeed, let $Z_i=Z_i(N)$ denote the $i$-th center of $N$, i.e $Z_0=\{ 1\}$ and $Z_i/Z_{i-1}=Z(G/Z_{i-1})$ for $i\geq 1$. Then, for $x\in N$, the series
$\langle x \rangle  \lhd \langle Z_1, x \rangle  \lhd \cdots \lhd \langle Z_n, x \rangle  =N$ (for some integer $n$) is a subnormal series in $N$.

So, suppose $G$ is an finite abelian-by-supersolvable group such that every cyclic subgroup of order not a divisor of $4$ or $6$ is subnormal in $G$.
Let $g\in G$ be of order not a divisor or $4$ or $6$ and let
$$\mathcal{N} : N_0=\langle g \rangle \lhd N_1 \lhd N_2 \lhd \cdots \lhd N_m =G$$
be a subnormal series in $G$. For $u\in \U (\Z \langle g \rangle )$ put
 $$c_o^{\mathcal{N}}(u)=u$$
 and
  $$c_i^{\mathcal{N}} (u)=\prod_{h\in T_i}c_{i-1}^{\mathcal{N}}(u)^{h},$$
where $T_i$ is a transversal for $N_{i-1}$ in $N_i$, $i\geq 1$.
That this construction is well defined follows from the following lemma.

\begin{lemma}\label{centralconstruction}
With notation as above.
\begin{enumerate}
\item$ c_{i-1}^{\mathcal{N}} (u)^x \in \Z N_{i-1}$ for $x\in N_i$,
\item $c_{i-1}^{\mathcal{N}} (u)^{x}= c_{i-1}^{\mathcal{N}} (u)$ for $x\in N_{i-1}$,
\item $c_i^{\mathcal{N}} (u)$ is independent of the chosen transversal $T_i$.
\end{enumerate}
In particular, $c_m^{\mathcal{N}} (u)\in Z(\U (\Z G))$.
\end{lemma}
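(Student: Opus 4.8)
The plan is to prove the three assertions simultaneously by induction on $i$, carrying along the stronger inductive statement that $c_i^{\mathcal{N}}(u)$ is a well-defined unit of $\Z N_i$ lying in the centre $Z(\Z N_i)$ (equivalently, fixed under conjugation by every element of $N_i$). The only structural input needed is that $\mathcal{N}$ is a \emph{subnormal} series, i.e. $N_{i-1}\lhd N_i$. The base case $i=0$ is immediate: $c_0^{\mathcal{N}}(u)=u\in\U(\Z\langle g\rangle)=\U(\Z N_0)$, and since $N_0=\langle g\rangle$ is abelian, $\Z N_0$ is commutative, so $u$ is automatically central. The whole point of the construction is that the averaging step $\prod_{h\in T_i}(\cdot)^h$ upgrades a central unit of $\Z N_{i-1}$ to a central unit of $\Z N_i$, and the normality $N_{i-1}\lhd N_i$ is precisely what lets conjugation by $N_i$ respect the smaller group ring.

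Assuming the inductive statement at level $i-1$, I would dispatch (1) and (2) as follows. Since $N_{i-1}\lhd N_i$, conjugation by any $x\in N_i$ is a ring automorphism of $\Z N_i$ carrying $N_{i-1}$ onto itself, hence it restricts to an automorphism of $\Z N_{i-1}$; applying it to $c_{i-1}^{\mathcal{N}}(u)\in\Z N_{i-1}$ gives $c_{i-1}^{\mathcal{N}}(u)^x\in\Z N_{i-1}$, which is (1). For (2), if $x\in N_{i-1}$ then $c_{i-1}^{\mathcal{N}}(u)^x=c_{i-1}^{\mathcal{N}}(u)$ is exactly the statement that $c_{i-1}^{\mathcal{N}}(u)$ is central in $\Z N_{i-1}$, i.e. the inductive hypothesis. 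I would also record the sharper fact that, because an automorphism sends the centre to the centre, each conjugate $c_{i-1}^{\mathcal{N}}(u)^h$ with $h\in N_i$ again lies in the \emph{commutative} ring $Z(\Z N_{i-1})$; this is what renders the displayed product unambiguous, since all of its factors commute.

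For (3), let $T_i$ and $T_i'$ be two transversals of $N_{i-1}$ in $N_i$. Matching cosets, for each $h\in T_i$ there is a unique $h'\in T_i'$ with $h'=th$ for some $t\in N_{i-1}$, and then
$$c_{i-1}^{\mathcal{N}}(u)^{h'}=c_{i-1}^{\mathcal{N}}(u)^{th}=\left(c_{i-1}^{\mathcal{N}}(u)^{t}\right)^{h}=c_{i-1}^{\mathcal{N}}(u)^{h},$$
the last equality being (2). Hence the two products have the same collection of factors, and since these factors lie in the commutative ring $Z(\Z N_{i-1})$ their order is irrelevant, so the products coincide. This proves that $c_i^{\mathcal{N}}(u)$ is independent of the transversal; being a product of units of $\Z N_{i-1}$ (conjugates of a unit under automorphisms), it is itself a unit of $\Z N_{i-1}\subseteq\Z N_i$.

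Finally I would close the induction by verifying centrality of $c_i^{\mathcal{N}}(u)$ in $\Z N_i$. For $y\in N_i$, distributing conjugation over the product gives $c_i^{\mathcal{N}}(u)^y=\prod_{h\in T_i}c_{i-1}^{\mathcal{N}}(u)^{hy}$; since right multiplication by $y$ permutes the cosets of $N_{i-1}$, the set $T_iy$ is again a transversal, so by (3) this product equals $c_i^{\mathcal{N}}(u)$. Thus $c_i^{\mathcal{N}}(u)$ is fixed by all of $N_i$, completing the step, and taking $i=m$ yields $c_m^{\mathcal{N}}(u)\in Z(\Z G)\cap\U(\Z G)=Z(\U(\Z G))$. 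The one point demanding care — and the crux of the argument — is the circular-looking interplay between (2) and (3): centrality at level $i-1$ is exactly what forces the averaged product to be transversal-independent, and transversal-independence is in turn exactly what delivers centrality at level $i$. Keeping every conjugated factor inside the commutative centre $Z(\Z N_{i-1})$ is the device that breaks the apparent circularity and makes the products genuinely well defined.
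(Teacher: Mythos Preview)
Your proof is correct. The paper actually states this lemma without proof, but the inductive argument you give --- using normality $N_{i-1}\lhd N_i$ for (1), the inductive centrality hypothesis for (2), the observation that all factors lie in the commutative ring $Z(\Z N_{i-1})$ to make the product unambiguous and transversal-independent for (3), and then the fact that $T_iy$ is again a transversal to close the induction --- is exactly the standard argument and is precisely what the authors have in mind.
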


Because the class of abelian-by-supersolvable groups is closed under taking subgroups (a property that does not hold for the larger class consisting of
strongly monomial groups) one can prove the following result.

\begin{theorem} (Jespers, Olteanu, del R\'{\i}o and Van Gelder) 
Let $G$ be a finite abelian-by-supersolvable group such that every cyclic subgroup of order not a divisor of $4$ or $6$ is subnormal in $G$.
Let $g\in G$ be of order not a divisor or $4$ or $6$. Then, the group generated by the Bass units of $\Z G$ contains a subgroup of finite index in $Z (\Z (\U (\Z G)))$.

Actually, for each subgroup $\langle g \rangle$, of order not dividing $4$ or $6$, fix a subnormal series $\mathcal{N}_{g}$ from $\langle g \rangle$ to $G$. Then
 $$\langle c^{\mathcal{N}_{g}} (b_g) \mid b_g \text{ a Bass unit based on } g, \; g\in G\rangle ,$$
 is of finite index in $Z(\U (\Z G))$.
\end{theorem}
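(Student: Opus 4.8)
The plan is to establish the two assertions in the theorem together, since the explicit generating set in the second part is really the engine that drives the finite-index claim in the first. The strategy rests on three pillars already in place: the construction $c^{\mathcal{N}}_m(u)$ from Lemma~\ref{centralconstruction}, which produces genuine central units from Bass units; the result of Bass-Milnor (stated just before the previous theorem of this section) that the Bass units $u_{k,m}(\xi^i g)$ generate a subgroup of finite index in $\U(\Z[\xi]\langle g\rangle)$; and the rank formula for $Z(\U(\Z G))$ coming from Dirichlet's Unit Theorem together with Artin's description of $r_{\Q}(\Q G)$. First I would fix, for each cyclic subgroup $\langle g\rangle$ of order not dividing $4$ or $6$, the chosen subnormal series $\mathcal{N}_g$ (which exists precisely by the subnormality hypothesis), and define $H$ to be the subgroup of $Z(\U(\Z G))$ generated by all $c^{\mathcal{N}_g}(b_g)$ as $b_g$ ranges over Bass units based on $g$. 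By Lemma~\ref{centralconstruction}, $H$ is indeed a subgroup of $Z(\U(\Z G))$.

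Next I would reduce the finite-index claim to a rank computation. Since $Z(\U(\Z G)) = \pm Z(G)\times F$ with $F$ free abelian of the rank given by the preceding corollary, it suffices to show that $H$ is a finitely generated abelian group whose free rank equals $\operatorname{rank}(F)$; equivalently, that $H$ contains $\operatorname{rank}(F)$ multiplicatively independent units. The idea is to analyse $H$ component by component through the Wedderburn decomposition $\Q G = \prod_i \Q G e_i$. The central units project into the centers $Z(\Q G e_i)$, and by Dirichlet the rank of the centre is the sum over components of $(r_{\R}-r_{\Q})$ contributions. The key local fact is that for each $g$, the Bass units $u_{k,m}(g)$ already generate a finite-index subgroup in $\U(\Z\langle g\rangle)$, and after applying the averaging operator $c^{\mathcal{N}_g}$ these map, in each relevant simple component of $\Q G$, to central units that still span a finite-index subgroup of the unit group of the maximal order in that center. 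The closure of the abelian-by-supersolvable class under subgroups is what guarantees that $\Z\langle g\rangle$ sits compatibly inside $\Z G$ and that the strongly monomial description of simple components (so that each center is of the form $\Q(\xi_d)$ and governed by cyclotomic/Bass units) applies uniformly.

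The heart of the argument, and the step I expect to be the main obstacle, is to show that after the averaging $c^{\mathcal{N}_g}$ no rank is lost — that is, the central units produced from the Bass units collectively still surject, up to finite index, onto $F$. Locally, on a component $\Q G e$ whose center is $\Q(\xi_d)$, one must verify that the image of $c^{\mathcal{N}_g}(u_{k,m}(g))$ under the natural map is, up to a bounded power, a product of the cyclotomic units $\eta_k(\xi_d)$ that Bass-Milnor guarantees generate a finite-index subgroup of $\U(\Z[\xi_d])$. The delicate point is bookkeeping: the averaging operator multiplies conjugates $c_{i-1}^{\mathcal{N}}(u)^h$, and one must check that this conjugation-and-multiplication does not collapse the contribution to the center of a given component to something of infinite index (e.g.\ a root of unity or a proper power). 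Handling this requires tracking how the Galois action $\sigma: N/H \to \operatorname{Gal}(\Q(\xi_h)/\Q(\chi^G))$ from the strong Shoda pair description permutes the cyclotomic contributions, and invoking that $g$ has order not dividing $4$ or $6$ precisely to ensure the corresponding $\eta_k(\xi_d)$ is of infinite order (cf.\ Lemma~\ref{BassTorsionL}).

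Once the local non-degeneracy is established, I would assemble the global statement: the units $c^{\mathcal{N}_g}(b_g)$, ranging over all admissible $g$, cover (up to finite index) every free cyclic factor of $F$, so $H$ has full rank in $Z(\U(\Z G))$; being a subgroup of the finitely generated abelian group $F$ of the same rank, it is automatically of finite index. This simultaneously yields both the general statement that the group generated by Bass units contains a finite-index subgroup of $Z(\U(\Z G))$ and the explicit form claimed, since $H$ is by construction $\langle c^{\mathcal{N}_g}(b_g) \mid b_g \text{ a Bass unit based on } g,\ g\in G\rangle$. The subtle verification throughout is that the subnormality hypothesis is used exactly where needed, namely to make $c^{\mathcal{N}_g}$ well-defined and central via Lemma~\ref{centralconstruction}, while the abelian-by-supersolvability (closed under subgroups) supplies the strongly monomial structure on all the relevant subalgebras.
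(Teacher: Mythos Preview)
The paper is a survey and does not actually prove this theorem; the only hint it offers is the sentence immediately preceding the statement, that the class of abelian-by-supersolvable groups is closed under taking subgroups. So there is no detailed argument in the paper against which to measure your proposal.

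That said, your outline has a genuine gap. You correctly isolate the three ingredients (the $c^{\mathcal N}$--construction of Lemma~\ref{centralconstruction}, Bass--Milnor, and the Dirichlet rank formula) and you correctly flag ``no rank is lost under averaging'' as the crux. But you then leave that step entirely unproved: the component-by-component Galois bookkeeping you describe is a wish, not an argument, and nothing in the proposal explains why the product of conjugates could not collapse to a unit of smaller rank in some centre $\Q(\xi_d)$. As written this is a plan, not a proof.

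The idea you are missing is both simpler and more structural than the tracking you propose. Since $h^{-1}u_{k,m}(g)h = u_{k,m}(h^{-1}gh)$, the element $c^{\mathcal N_g}(b_g)$ is itself a product of $[G:\langle g\rangle]$ Bass units, one for each coset of $\langle g\rangle$ in $G$. In any abelian quotient of $\U(\Z G)$ --- in particular in the image inside $K_1(\Z G)$ --- conjugates coincide, so $c^{\mathcal N_g}(b_g)$ and $b_g^{[G:\langle g\rangle]}$ have the \emph{same} image there. Now Bass's theorem (quoted in the paper just before this result) says the Bass units already have finite-index image in $K_1(\Z G)$; raising to the bounded exponent $[G:\langle g\rangle]\le |G|$ preserves finite index in a finitely generated abelian group; hence the central units $c^{\mathcal N_g}(b_g)$ also have finite-index image in $K_1(\Z G)$. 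Finally one uses that the natural map $Z(\U(\Z G))\to K_1(\Z G)$ is injective modulo torsion (both sides have free rank $r_{\R}(\Q G)-r_{\Q}(\Q G)$) to pull finite index back to $Z(\U(\Z G))$. This single observation --- that averaging is transparent at the level of $K_1$ --- replaces all of the Galois tracking you envisage, and it is here that the closure of the class under subgroups is actually used, to guarantee the subnormal series (and hence the construction) is available for every cyclic subgroup along the way.
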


Recently, a beautiful generalization of this result has been obtained by Bakshi and Kaur \cite{BakshiKaur2} for a uch wider class of groups, the class consisting of the  generalized strongly monomial groups (which defined via the notion  of generalized strong Shoda pair).

\section{Structure theorems of unit groups} 

The exceptional simple components are an obstruction for the construction of finitely many generators for a subgroup of finite index in the unit group of $\Z G$ (for a finite group $G$).
Maybe surprisingly, many of these components are not an obstruction for proving a ``structure theorem'', on the contrary.

According to Kleinert\cite{kleinert}  a ``Unit Theorem''  for the unit group $\U (\Z G)$ is a statement that should at least  consist, in purely group theoretical terms, of a class of groups $\mathcal{G}$ such that almost all torsionfree subgroups
of finite index  in $\U (\Z G)$ are members of $\mathcal{G}$.

So one can pose the following general problem.

\begin{problem}
For a class of groups $\mathcal{G}$, classify the finite groups $G$, such that $\U (\Z G)$ constains a subgroup of finite index in  $\mathcal{G}$.
\end{problem}

In the following results we state the answer for the class of groups  $\mathcal{G}$ that consists of 
direct products of free products of abelian groups  (Jespers and del R\'io) and for the class of groups that consists of the direct products of free-by-free groups (Jespers, Pita, del R\'io, Ruiz, P. Zalesskii).

\begin{theorem}
The following properties are equivalent for a finite group $G$.
\begin{enumerate}
\item $\U (\Z G)$ is either virtually abelian or virtually nonabelian free.
\item $\U (\Z G)$ is virtually a free product of abelian groups.
\item $\Q G$ is a direct product of fields, division rings of the form $\left( \frac{-1,-3}{\Q} \right)$, or $\mathbb{H}(K)$ with $K=\Q$, $\Q (\sqrt{2})$ or $\Q(\sqrt{3})$ and at most one copy of $M_{2}(\Q )$.
\item One of the following conditions hold:
\begin{enumerate}
\item $G=Q_8 \times C_2^{n}$,
\item $G$ is abelian,
\item $G$ is one of the following groups: $D_6,\; D_8, Q_{12} =\langle a,b\mid a^6=1,\; b^2=a^3,\; ba=a^5b\rangle$, $P=\langle a,b\mid a^4=1, \; b^4=1,
aba^{-1} b^{-1}= a^2\rangle$ (in this case $\U (\Z G)$ is virtually nonabelian free).
\end{enumerate}
\end{enumerate}
\end{theorem}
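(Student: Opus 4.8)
The plan is to make the Wedderburn decomposition of $\Q G$ the hub of the argument and to read off the group-theoretic shape of $\U(\Z G)$ from the unit groups of orders in the simple components. Writing $\Q G=\prod_i M_{n_i}(D_i)$ and fixing an order $\mathcal{O}_i$ in each $D_i$, Lemma~\ref{ordersunits}(4) shows that $\U(\Z G)$ is commensurable with $\prod_i \GL_{n_i}(\mathcal{O}_i)$, so every property listed in the theorem is invariant under passing to this product of factors. I would then prove the implications $(1)\Rightarrow(3)$, $(2)\Rightarrow(3)$, $(3)\Rightarrow(4)$, $(4)\Rightarrow(1)$ and $(4)\Rightarrow(2)$, using $(3)$ as the pivot so that both $(1)$ and $(2)$ become tethered to the same description of the admissible simple components.

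For the forward directions $(1),(2)\Rightarrow(3)$ I would classify which $\GL_{n_i}(\mathcal{O}_i)$ may appear. The guiding principle is that a virtually free group contains no copy of $\Z^2$, and that a direct product of two infinite factors always does; hence at most one factor may be infinite and not virtually abelian. The analysis of a single factor is: a field contributes, by Dirichlet's Unit Theorem, a finitely generated abelian group; a totally definite quaternion algebra over a totally real field --- in particular $\left(\frac{-1,-3}{\Q}\right)$ and $\mathbb{H}(K)$ for $K=\Q,\Q(\sqrt 2),\Q(\sqrt 3)$ --- has finite unit group; every other noncommutative division algebra has, by the Dirichlet theory for orders in division algebras, an infinite unit group that is neither virtually abelian nor virtually free; and for $M_{n}(D)$ with $n\ge 2$ the unipotent subgroup $1+\mathcal{O}E_{12}\cong(\mathcal{O},+)$ is free abelian of rank $\dim_\Q D$, so it already contains $\Z^2$ unless $D=\Q$, while for $D=\Q$ and $n\ge 3$ the commuting unipotents $1+E_{12}$ and $1+E_{13}$ again give $\Z^2$. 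Thus the only matrix component surviving in the non-virtually-abelian case is $M_2(\Q)$, whose order yields $\GL_2(\Z)$, a virtually nonabelian free group; and when $M_2(\Q)$ does occur the accompanying field factors must themselves be finite (so the fields are $\Q$ or imaginary quadratic), since $\GL_2(\Z)\times\Z$ is one-ended and therefore neither virtually free nor virtually a nontrivial free product. Collecting the surviving possibilities yields exactly the list in $(3)$.

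For $(3)\Rightarrow(4)$ I would run the group-theoretic classification: using the description of the simple component attached to a (strong) Shoda pair, translate ``$\Q G$ involves only the admissible components, with at most one copy of $M_2(\Q)$'' into conditions on the rational irreducible characters of $G$, and conclude that $G$ is abelian, or $G\cong Q_8\times C_2^{n}$ (the finite-unit case of Higman's Theorem), or one of $D_6,D_8,Q_{12},P$. For the converse implications $(4)\Rightarrow(1)$ and $(4)\Rightarrow(2)$ I would compute the decompositions directly --- for instance $\Q D_6=\Q\times\Q\times M_2(\Q)$ and $\Q D_8=\Q^4\times M_2(\Q)$, with $Q_{12}$ and $P$ adding a copy of $\left(\frac{-1,-3}{\Q}\right)$ --- read off that $\U(\Z G)$ is commensurable with a finite group, a free abelian group, or $\GL_2(\Z)$, and present it accordingly as virtually abelian, virtually nonabelian free, and (via the graph-of-groups structure of $\GL_2(\Z)$) virtually a free product of abelian groups.

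The main obstacle will be the forward step $(1),(2)\Rightarrow(3)$: one has to exclude \emph{every} component outside the admissible list, and the delicate cases are the exceptional $2\times2$ rings $M_2(\Q(\sqrt{-d}))$ and $M_2$ over a definite quaternion algebra, where Theorem~\ref{BassVaserstein} does not apply. Here the exclusion is carried by the elementary but crucial observation that the additive rank of the order forces a $\Z^2$, together with the finiteness/infiniteness dichotomy for unit groups of orders in division algebras; the latter is exactly what separates the harmless finite components $\left(\frac{-1,-3}{\Q}\right),\mathbb{H}(\Q(\sqrt 2)),\mathbb{H}(\Q(\sqrt 3))$ from the forbidden ones. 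A second genuine difficulty is the finite classification $(3)\Rightarrow(4)$, which requires showing, through the structure of Shoda pairs, that no group beyond the listed ones realises precisely the admissible family of components.
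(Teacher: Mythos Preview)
The paper is a survey and does not include a proof of this theorem; the result is attributed to Jespers and del R\'{\i}o, and only the Wedderburn decompositions of the four nonabelian groups in (4c) are displayed afterward. So there is nothing to compare against and your plan must stand on its own.

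Your overall architecture --- route everything through commensurability with $\prod_i \GL_{n_i}(\mathcal{O}_i)$ and make (3) the pivot --- is the right one, but there is a factual slip that damages the forward direction. An order in $\mathbb{H}(\Q(\sqrt{2}))$ or $\mathbb{H}(\Q(\sqrt{3}))$ does \emph{not} have finite unit group: the reduced-norm-one elements are finite (total definiteness gives that), but the centre contributes $\U(\Z[\sqrt{2}])$ resp.\ $\U(\Z[\sqrt{3}])$, which by Dirichlet is infinite cyclic up to torsion. So these components give virtually-$\Z$ factors, not finite ones. Your argument that ``when $M_2(\Q)$ occurs the accompanying factors must have finite unit group'' therefore correctly excludes $\mathbb{H}(\Q(\sqrt 2))$ and $\mathbb{H}(\Q(\sqrt 3))$ from coexisting with $M_2(\Q)$ --- but by a different mechanism than the one you state.

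More structurally, your component analysis for $(1),(2)\Rightarrow(3)$ cannot by itself produce the \emph{specific} list of quaternion algebras in (3). From the unit-group side alone you only obtain: the components are fields, totally definite quaternion algebras (any of them, over any totally real field), and at most one $M_2(\Q)$, with the finiteness constraint on the other factors when $M_2(\Q)$ is present. The narrowing to precisely $\left(\frac{-1,-3}{\Q}\right)$ and $\mathbb{H}(K)$ with $K\in\{\Q,\Q(\sqrt2),\Q(\sqrt3)\}$ is not a consequence of the shape of unit groups at all --- it emerges because these are the only totally definite quaternion algebras that actually occur in $\Q G$ once the group-theoretic classification (4) has been carried out. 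So the honest implication flow is $(1),(2)\Rightarrow(3')\Rightarrow(4)\Rightarrow(3)$, where $(3')$ is the weaker condition your component analysis genuinely yields; your plan implicitly follows this route, but the write-up should not claim that analysing $\GL_{n_i}(\mathcal{O}_i)$ alone pins down the sharp list in (3).
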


Note that the respective Wedderburn decomposition of the mentioned   rational group algebras is as follows.
\begin{eqnarray*}
{\Q} D_{6} & \cong & 2{\Q}\oplus M_{2}({\Q}),\\
{\Q} D_{8} & \cong & 4{\Q}\oplus M_{2}({\Q}),\\
{\Q}Q_{8} & \cong & 4{\Q} \oplus {\mathbb{H}}({\Q}),\\
{\Q} P & \cong & 4{\Q} \oplus 2{\Q}(i)
           \oplus {\mathbb{H}}({\Q}) \oplus M_{2}({\Q})\\
{\Q} Q_{12} & \cong & 2\Q \oplus \Q (\sqrt{-3}) +\left( \frac{-1,-3}{\Q}\right)  \oplus M_{2}({\Q}),\\
\end{eqnarray*}

\begin{theorem}
The following properties are equivalent for a finite group $G$.
\begin{enumerate}
\item $\U (\Z G)$ is virtually a direct product of free-by-free groups.
\item For every simple component $A$ of $\Q G$ and some (every) order $\mathcal{O}$ in $A$, the group of reduced norm one elements in $\mathcal{O}$
is virtually free-by-free.
\item Every simple component of $\Q G$ is either a field, a totally definite quaternion algebra, or $M_{2}(K)$ where  $K$ is either $\Q (i)$, $\Q (\sqrt{-2})$,
$\Q (\sqrt{-3})$.
\item $G$ is either abelian or an epimorphic image of $A\times H$, where $A$ is an abelian group and one of the following conditions holds:
\begin{enumerate}
\item $A$ has exponent $6$ and $H$ is one of the groups $\mathcal{W}$, $\mathcal{W}_{1n}$ or $\mathcal{W}_{2n}$.
\item $A$ has exponent $4$ and $H$ is one of the groups $\mathcal{V}$, $\mathcal{V}_{1n}$, $\mathcal{V}_{2n}$, $\mathcal{U}_1$ or $\mathcal{U}_2$.
\item $A$ has exponent $2$ and $H$ is one of the group $\mathcal{T}$, $\mathcal{T}_{1n}$, $\mathcal{T}_{2n}$ or $\mathcal{T}_{3n}$.
\item $H=M\rtimes P = (M\times Q):\langle \overline{u}\rangle_{2}$, where $M$ is an elementary abelian $3$-group, $P=Q:\langle \overline{u}\rangle_2$, $m^u=m^{-1}$ for every $m\in M$, and one of the following conditions holds:
\begin{enumerate}
  \item[-] $A$ has exponent $4$ and $P=C_8$,
  \item[-]  $A$ has exponent $6$, $P=W_{1n}$ and $Q=\langle y_1, \ldots , y_n , t_1, \ldots , t_n ,x^2\rangle$,
  \item[-]  $A$ has exponent $2$, $P=W_{21}$ and $Q=\langle y_1^2 , x\rangle$.
\end{enumerate}
\end{enumerate}
\end{enumerate}
The non-nilpotent groups are those listed in (4) with $M$ non-trivial.
\end{theorem}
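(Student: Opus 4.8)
The plan is to prove the chain $(2)\Rightarrow(1)$, $(1)\Rightarrow(2)$, $(2)\Leftrightarrow(3)$ and $(3)\Leftrightarrow(4)$, using condition (2) as the bridge that localises the global structure of $\U(\Z G)$ at the individual Wedderburn components. The first task is the reduction $(1)\Leftrightarrow(2)$. Writing $\Q G=\prod_i M_{n_i}(D_i)$ and choosing an order $\mathcal{O}_i$ in each $D_i$, the ring $\bigoplus_i M_{n_i}(\mathcal{O}_i)$ is an order in $\Q G$, so by Lemma~\ref{ordersunits} its unit group is commensurable with $\U(\Z G)$; hence $\U(\Z G)$ is commensurable with $\prod_i \GL_{n_i}(\mathcal{O}_i)$. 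By the Proposition asserting that $\GL_{n}(\mathcal{O})$ has a finite-index subgroup isomorphic to a finite-index subgroup of $\SL_{n}(\mathcal{O})\times\U(R)$, together with Dirichlet's Unit Theorem (which makes each $\U(R_i)$ virtually free abelian), $\U(\Z G)$ is commensurable with $\left(\prod_i \SL_{n_i}(\mathcal{O}_i)\right)\times\Z^{r}$. Since free abelian groups are finite direct products of copies of $\Z$, and $\Z$ is free, the abelian factor always lies in the class of direct products of free-by-free groups; moreover free-by-free groups are inherited by finite-index subgroups, so being virtually a direct product of free-by-free groups is a commensurability invariant (this also disposes of the ``some (every) order'' clause). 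Consequently $(2)\Rightarrow(1)$ is immediate, while $(1)\Rightarrow(2)$ rests on the heredity of the class under passing to the direct factors $\SL_{n_i}(\mathcal{O}_i)$, each of which is, up to commensurability, a retract of $\U(\Z G)$.

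Next I would settle $(2)\Leftrightarrow(3)$ by analysing, component by component, when the reduced norm one group $\SL_{n}(\mathcal{O})$ of a simple algebra $A=M_{n}(D)$ is virtually free-by-free. If $A$ is a field the reduced norm one group is trivial, so (2) holds. If $n=1$ and $D$ is noncommutative, then $\SL_{1}(\mathcal{O})$ is finite exactly when $D$ is a totally definite quaternion algebra (by Dirichlet's Unit Theorem and Kleinert's description), in which case it is free-by-free; every other noncommutative division algebra produces an infinite arithmetic group that fails to be virtually free-by-free. If $n\geq 2$ and $A$ is not exceptional, Theorem~\ref{BassVaserstein} places $E(I)$ as a finite-index subgroup of $\SL_{n}(\mathcal{O})$, and these elementary groups are of higher-rank arithmetic type and are never virtually free-by-free, so such components are forbidden. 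There remain the exceptional components of type $2$, which by Theorem~\ref{possible exceptional components} are confined to $M_{2}(\Q(\sqrt{-d}))$ with $d\in\{0,1,2,3\}$ and to $M_{2}(\mathbb{H}_2)$, $M_{2}(\mathbb{H}_3)$, $M_{2}(\mathbb{H}_5)$; for the former four the reduced norm one group is $\SL_{2}(\Z)$ (virtually free) or one of the three Bianchi groups $\SL_{2}(\mathcal{O}_{-d})$, $d\in\{1,2,3\}$ (virtually free-by-free), whereas the three quaternionic cases are not virtually free-by-free. Ruling out the forbidden possibilities leaves precisely the components in (3).

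The final step $(3)\Leftrightarrow(4)$ translates the constraint on Wedderburn components into the explicit group list. Here the plan is to use the description of the simple components $A_{\Q}(G,H,K)$ attached to (strong) Shoda pairs of $G$, together with the classification of finite groups admitting a faithful exceptional $2\times 2$-matrix component indicated in Section~5. Condition (3) forces every noncommutative component that is not a totally definite quaternion algebra to be one of the three allowed $M_{2}(K)$, and the abelian case is separated off at once. Running through the admissible quotients $Ge$ and closing the resulting groups under the ``epimorphic image of $A\times H$'' structure yields the families $\mathcal{W}$, $\mathcal{V}$, $\mathcal{T}$ and the semidirect-product family of (4)(d), with the exponent of the abelian part $A$ governed by which imaginary quadratic field and which quaternion orders are permitted.

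I expect the genuine obstacles to be twofold. The arithmetic heart of $(2)\Leftrightarrow(3)$ is the dichotomy that the three Bianchi groups $\SL_{2}(\mathcal{O}_{-d})$ with $d\in\{1,2,3\}$ \emph{are} virtually free-by-free---established through their action on hyperbolic $3$-space and the resulting Bass--Serre/graph-of-groups decompositions---while $\SL_{2}$ of orders in $\mathbb{H}_2,\mathbb{H}_3,\mathbb{H}_5$ and all non-exceptional $M_{n}(D)$ with $n\geq 2$ are \emph{not}; this is where arithmetic-group input, rather than the formal reductions, is indispensable. The combinatorial heart is the implication $(3)\Rightarrow(4)$: it is a finite but intricate classification, and the delicate point is controlling how several admissible components can coexist inside a single $\Q G$ and how the faithful exceptional components lift to epimorphic preimages, which is exactly what produces the somewhat elaborate list of groups $H$ in (4).
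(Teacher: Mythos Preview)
The paper does not contain a proof of this theorem. It is a survey article, and this result is merely \emph{stated} (and attributed to Jespers, Pita, del R\'{\i}o, Ruiz and Zalesskii), with the subsequent paragraphs only listing the groups $\mathcal{W}$, $\mathcal{W}_{in}$, $\mathcal{V}$, $\mathcal{V}_{in}$, $\mathcal{U}_i$, $\mathcal{T}_{in}$ that appear in part~(4). There is therefore nothing in the paper against which to compare your argument step by step.

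That said, your overall architecture---reduce to Wedderburn components via Lemma~\ref{ordersunits} and the $\SL_n\times\U(R)$ proposition, then test each simple component, then translate into a group classification---is precisely the skeleton used in the original source (and in \cite{bookvol2}). Two remarks on points that would need tightening in a real proof. First, your passage ``$(1)\Rightarrow(2)$ rests on the heredity of the class under passing to the direct factors'' glosses over a genuine difficulty: the direct-product decomposition in~(1) is \emph{abstract} and need not line up with the Wedderburn decomposition, so ``each $\SL_{n_i}(\mathcal{O}_i)$ is a retract up to commensurability'' does not immediately yield that each factor is virtually free-by-free; in the original this is handled via virtual cohomological dimension and properties of the arithmetic groups involved. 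Second, in your case analysis for $(2)\Leftrightarrow(3)$ you correctly observe that $\SL_2(\Z)$ is virtually free; the statement in the survey lists only $K=\Q(i),\Q(\sqrt{-2}),\Q(\sqrt{-3})$ in item~(3), so either $M_2(\Q)$ is intended to be included (as in the original paper) or you should be alert to a transcription slip here when reconciling (3) with your argument.
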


The first class consists of the following groups.
\begin{eqnarray*}
 \mathcal{W} &=& \left(\langle t \rangle_2 \times \langle x^2 \rangle_2 \times \langle y^2\rangle_2\right) : \left(\langle \overline{x}\rangle_2 \times
\langle \overline{y}\rangle_2\right),\\
     && \text{ with } t = \left(x,y\right)  \text{ and } Z(\mathcal{W})=\langle x^2,y^2,t\rangle.\\
 \mathcal{W}_{1n} &=& \left(\prod\limits_{i=1}^n \langle t_i\rangle_2 \times \prod\limits_{i=1}^n \langle y_i\rangle_2 \right) \rtimes \langle x\rangle_4,\\
 &&\text{ with } t_{i} = (x,y_i) \text{ and } Z(\mathcal{W}_{1n})=\langle t_1,\dots,t_n,x^2\rangle.\\
 \mathcal{W}_{2n} &=& \left(\prod\limits_{i=1}^n \langle y_i\rangle_4 \right) \rtimes \langle x\rangle_4,\\
   &&\text{ with } t_{i} = (x,y_i) = y_i^2 \text{ and } Z(\mathcal{W}_{2n})=\langle t_1,\dots,t_n,x^2\rangle.
\end{eqnarray*}

The second class of groups consists of the following groups.

\begin{eqnarray*}
\mathcal{V} &=& \left(\langle t\rangle_2 \times \langle x^2\rangle_4 \times \langle y^2\rangle_4\right) :
 \left(\langle \overline{x}\rangle_2 \times \langle \overline{y}\rangle_2\right),\\
 &&\text{ with } t = (x,y) \text{ and } Z(\mathcal{V})=\langle x^2,y^2,t\rangle .\\
 \mathcal{V}_{1n} &=& \left(\prod\limits_{i=1}^n \langle t_i\rangle_2 \times \prod\limits_{i=1}^n \langle y_i\rangle_4
\right) \rtimes \langle x\rangle_8,\\
 && \text{ with } t_{i} = (x,y_i) \text{ and }Z(\mathcal{V}_{1n})=\langle t_1,\dots,t_n,y_1^2,\dots,y_n^2,x^2\rangle.\\
 \mathcal{V}_{2n} &=& \left(\prod\limits_{i=1}^n \langle y_i\rangle_8 \right) \rtimes \langle x\rangle_8,\\
 &&\text{ with } t_{i} = (x,y_i) = y_i^4 \text{ and } Z(\mathcal{V}_{2n})=\langle t_1,\dots,t_n,x^2\rangle.
 \end{eqnarray*}

The third class consists of the following groups.

\begin{eqnarray*}
  \mathcal{U}_1 &=& \left(\prod\limits_{1\le i< j \le 3} \langle t_{ij}\rangle_2 \right) :\left(\prod\limits_{k=1}^3 \langle \overline{y_k}\rangle_4\right), \\
&& \text{with } Z(\mathcal{U}_1)=\langle t_{12},t_{13},t_{23},y_1^2,y_2^2,y_3^2\rangle, \;
 t_{ij} = (y_i,y_j) \text{ and } y_i^4=1 .\\
\mathcal{U}_2 &=& \left(\prod\limits_{1\le i< j \le 3} \langle t_{ij}\rangle_2  \right):\left(\prod\limits_{k=1}^3 \langle \overline{y_k}\rangle_4\right),\\
&& \text{with } Z(\mathcal{U}_2)=\langle t_{12},t_{13},t_{23},y_1^2,y_2^2,y_3^2\rangle, \; t_{ij} = (y_i,y_j),\\
&& y_1^4=1,\;y_2^4=t_{12}\text{ and } y_3^4=t_{13}.
\end{eqnarray*}

The following groups form part of the fourth class of groups.

\begin{eqnarray*}
 \mathcal{T}_{1n} &=&  \left(\prod\limits_{i=1}^n \langle t_i\rangle_4 \times \prod\limits_{i=1}^n \langle y_i\rangle_4 \right) \rtimes
                 \langle x\rangle_8,\\
 && \text{ with } t_{i} = (x,y_i),\; (x,t_i)=t_i^2 \text{ and } Z(\mathcal{T}_{1n})=\langle t_1^2,\ldots,t_n^2,x^2\rangle.
\end{eqnarray*}

A major issue remains the lack of knowledge of constructung large subgroups of the unit group of an order in a finite dimensional rational division algebra (so dealing with orders in exceptional components of type 1).

\begin{problem}
Discover generic constructions of units in orders of division algebras that are simple components of a rational group algebra $\Q G$ of a finite group. Discover generators of large subgroups in such orders.
\end{problem}

\begin{problem}
Describe finitely many generators for the following unit groups:
 $$\U (\Z (Q_8 \times C_3)) \quad \text{ and } \quad \U (\Z (Q_8 \times C_7 )).$$
\end{problem}

In recent work by B\"achle, Janssens, Jespers, Kiefer, Temmerman \cite{BJJKT}, it has been investigated when the unit group $\U (\Z G)$ (or more specifically the group generated by the bicyclic units) can or cannot be decomposed into a non-trivial amalgamated product. 
This is done  under the assumption that $\U (\Z G)^{ab}$ is finite. 
Because $\U (\Z G)$ is a finitely generated group, a result of Serre \cite{Serre} says that 
being both not a non-trivial amalgamated product and $\U (\Z G)^{ab}$ finite precisely occurs  when $\U (\Z G)$ has property FA. Recall that a group has property FA if every action on 
a simplicial tree has a global fixed point. Since unit theorems concern a property on all subgroups of finite index, 
one considers the hereditary property, denoted HFA, and a finite abelianization property, denoted FAb.  One 
says that a group has 
HFA if all its finite index subgroups have property FA and one says that a group  has property FAb if 
every subgroup of finite index has finite abelianization.
It is well-known that property FA follows from Kazhdan's property T (see \cite{BHV}).
Recall from Delorme-Guichardet's Theorem \cite[Theorem 2.12.4]{BHV} that a countable discrete group $\Gamma$  has property (T) if and only if every affine isometric action of $\Gamma$  on a real Hilbert space has a fixed point. 

B\"achle, Janssens, Jespers, Kiefer, Temmerman proved in \cite{BJJKT} 
 a   characterization  of when $\U (\Z G)$ satisfies  these hereditary properties.
Surprisingly, all these fixed point properties are equivalent and are controlled both in terms of $G$ and in terms of the Wedderburn decomposition of $\Q G$.

\begin{theorem}  
Let $G$ be a finite group. The following properties are equivalent:
\begin{enumerate}
\item The group $\U(\Z G)$ has property HFA,
\item The group $\U(\Z G)$ has property T,
\item The group $\U(\Z G)$ has property FAb,
\item $G$ is cut and $\mathbb{Q}G$ has no exceptional components,
\item $G$ is cut and $G$ does not map onto one of $10$ explicitly described groups.
\end{enumerate}
In particular, if these conditions are satisfied, then the group generated by the bicyclic units is not a non-trivial amalgamated product.

Furthermore, if $G$ does not have exceptional simple components (e.g. $G$ is of odd order), then 
the above conditions are equivalent to the following two equivalent conditions
\begin{enumerate}
\item[6.]
 $\mathcal{U}(\mathbb{Z}G)^{ab}$ is finite, 
 \item[7.]  $G$ is a cut group.
 \end{enumerate}
\end{theorem}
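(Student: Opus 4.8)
The plan is to prove the cycle $(2)\Rightarrow(1)\Rightarrow(3)\Rightarrow(4)\Rightarrow(2)$, to establish the equivalence $(4)\Leftrightarrow(5)$ separately from the classification of exceptional components, and then to deduce the ``in particular'' and ``furthermore'' clauses as corollaries. The two easy implications come first. For $(2)\Rightarrow(1)$, recall that property T is inherited by subgroups of finite index and that property T implies property FA (by Delorme--Guichardet, a group with T has no fixed-point-free affine isometric action on a Hilbert space, hence none on a tree); thus every finite-index subgroup of $\U(\Z G)$ has FA, which is exactly HFA. For $(1)\Rightarrow(3)$, note that $\U(\Z G)$ is finitely presented by Siegel's theorem, so all its finite-index subgroups are finitely generated; by Serre's characterization a finitely generated group with FA has no infinite cyclic quotient and hence finite abelianization, so HFA yields FAb.

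The first substantial step is $(3)\Rightarrow(4)$, which I would prove by contraposition. If $G$ is not cut, then by the rank formula the free abelian part $F$ of $Z(\U(\Z G))=\pm Z(G)\times F$ has positive rank, so there are central units of infinite order; composing the projections $\U(\Z G)\to \U(\Z G e_i)$ with the reduced norms into the (abelian) unit groups of orders in the centers $Z(D_i)$ gives a homomorphism from $\U(\Z G)$ to an abelian group that factors through $\U(\Z G)^{ab}$ and has infinite image, so $\U(\Z G)^{ab}$ is infinite and FAb already fails for the whole group. If instead $\Q G$ has an exceptional component $\Q G e$, then under the cut hypothesis it must be of type 2, since type 1 is excluded for cut groups by Theorem~\ref{possible exceptional components}(6); the reduced-norm-one group of an order in $\Q G e\cong M_2(D)$, with $D$ one of the small algebras listed there, is virtually nonabelian free or free-by-free and so has a finite-index subgroup mapping onto $\Z$. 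Pulling this back through the commensurability between $\U(\Z G)$ and $\prod_i \SL_{n_i}(\mathcal{O}_i)\times\prod_i\U(R_i)$ furnished by the Proposition on $\GL_n(\mathcal{O})$ produces a finite-index subgroup of $\U(\Z G)$ with infinite abelianization, contradicting FAb.

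The hard part is $(4)\Rightarrow(2)$. The strategy is to realize $\U(\Z G)$, up to commensurability, as the finite direct product $\prod_i \SL_{n_i}(\mathcal{O}_i)$ of reduced-norm-one groups of orders in the simple components: the central factors $\U(R_i)$ are all finite because $G$ is cut, so every component satisfies $r_{\R}=r_{\Q}$ and contributes only finitely by Dirichlet's Unit Theorem. Property T is stable under commensurability and finite direct products and is unaffected by a finite central factor, so it suffices to show each $\SL_{n_i}(\mathcal{O}_i)$ has property T. Here the deep input enters: each $\SL_{n_i}(\mathcal{O}_i)$ is an (S-)arithmetic lattice in a semisimple Lie group, and such a lattice has property T exactly when the ambient group has no rank-one simple factor of the non-T type ($SL_2(\R)$, $SL_2(\C)$, $SO(n,1)$). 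The definition of exceptional component is calibrated precisely so that a non-exceptional $M_n(D)$ yields only higher-rank (T-having) factors; hence the no-exceptional hypothesis guarantees property T for every $\SL_{n_i}(\mathcal{O}_i)$, and therefore for $\U(\Z G)$. This reliance on the property-T theory of higher-rank arithmetic lattices, together with the matching between the bad rank-one factors and the exceptional components, is the main obstacle and the technical heart of the argument.

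Finally, $(4)\Leftrightarrow(5)$ is a translation: by Theorem~\ref{possible exceptional components} together with the explicit (fewer than $60$) groups $Ge$ that yield type-2 exceptional components, one extracts the minimal such quotients, and under the cut restriction these reduce to the stated $10$ groups, so ``no exceptional component'' is equivalent to ``$G$ does not map onto one of these $10$ groups.'' The ``in particular'' clause then follows because condition $(1)$ gives FA to every finite-index subgroup, and the group generated by the bicyclic units---which has finite index when there are no exceptional components---therefore has FA and so is not a nontrivial amalgam, by Serre. For the ``furthermore'' clause, when $\Q G$ has no exceptional components the reductions above show that the reduced-norm maps identify the free rank of $\U(\Z G)^{ab}$ with that of the central part $F$, so $\U(\Z G)^{ab}$ is finite iff $G$ is cut; this gives $(6)\Leftrightarrow(7)$, and in the no-exceptional setting $(7)$ coincides with $(4)$.
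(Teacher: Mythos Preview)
The paper is a survey and does not include a proof of this theorem; it simply attributes the result to B\"achle, Janssens, Jespers, Kiefer and Temmerman \cite{BJJKT}. So there is no in-paper proof to compare against, and your sketch is already more detailed than anything in the survey. Your overall architecture---the cycle $(2)\Rightarrow(1)\Rightarrow(3)\Rightarrow(4)\Rightarrow(2)$ with $(4)\Leftrightarrow(5)$ handled separately via the classification of exceptional components---is the natural one and is presumably the route taken in \cite{BJJKT}.

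There is, however, a real gap in your $(4)\Rightarrow(2)$ step. You claim that ``the definition of exceptional component is calibrated precisely so that a non-exceptional $M_n(D)$ yields only higher-rank (T-having) factors.'' As a general statement this is false: $M_2(F)$ with $F$ a totally real field of degree $\geq 2$ (say $F=\Q(\sqrt{2})$) is non-exceptional by the paper's Definition, yet $\SL_2(\mathcal{O}_F)$ is an irreducible lattice in $\SL_2(\R)\times\SL_2(\R)$, which has $\SL_2(\R)$ factors and therefore does \emph{not} have property T. Hilbert modular groups enjoy FAb (via the congruence subgroup property) but not T, so in general FAb and T diverge for arithmetic groups. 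The notion of ``exceptional'' in this paper is calibrated to Theorem~\ref{BassVaserstein} (finite index of elementary subgroups), not to property T.

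What makes $(4)\Rightarrow(2)$ go through is that the \emph{cut} hypothesis does double duty. You correctly use it to make the central factors $\U(R_i)$ finite, but you must use it a second time: cut forces every character field---hence the center of every simple component of $\Q G$---to be $\Q$ or imaginary quadratic. It is only under this restriction on centers that one can check case by case that every non-exceptional component yields an ambient semisimple group with property T (the potentially bad $\SL_2(\R)^d$ and $\SL_2(\R)\times\SL_2(\C)^s$ situations simply cannot occur). You do note in passing that each component has $r_{\R}=r_{\Q}$, which is exactly this restriction, but you then fail to feed it into the T-analysis; as written, the key sentence is incorrect. A minor related point: your list of rank-one groups without T omits $\mathrm{SU}(n,1)$, though this does not affect the group-ring application.
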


In \cite{BJJKT1}, B\"achle, Janssens, Jespers, Kiefer, Temmerman proved  the following dichotomy.

\begin{theorem} 
Let $G$ be finite group which is solvable or $5 \not| |G|$. If $G$ is a cut group, then exactly one of the following properties holds:
\begin{enumerate}
\item $\U(\Z G)$ has property T.
\item $\U(\Z G)$ is commensurable with a non-trivial amalgamated product.
\end{enumerate}
\end{theorem}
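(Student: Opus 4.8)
The plan is to deduce the dichotomy from the characterisation of property T established just above, showing first that the two alternatives are mutually exclusive and then that at least one of them must hold. For exclusivity, recall that property T forces property HFA, so every subgroup of finite index in $\U(\Z G)$ has property FA. If $\U(\Z G)$ were commensurable with a non-trivial amalgam $P=A_1*_C A_2$, then a subgroup of finite index in $\U(\Z G)$ would be isomorphic to a subgroup $H$ of finite index in $P$, and $H$ would act on the (infinite) Bass--Serre tree of $P$. Since $[P:H]<\infty$ while every vertex or edge stabiliser $A_i^{\,g}$, $C^g$ has infinite index in $P$, the subgroup $H$ can fix no vertex and hence fails property FA, contradicting HFA. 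Thus (1) and (2) cannot hold simultaneously, and it remains to prove that at least one of them holds.

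So I would assume that $\U(\Z G)$ does not have property T and produce an amalgam. By the characterisation above, for the cut group $G$ this failure is equivalent to $\Q G$ possessing an exceptional simple component. Since $G$ is cut, Theorem~\ref{possible exceptional components}(6) excludes components of type 1, so the component is of type 2. The hypothesis that $G$ is solvable or $5\nmid|G|$ enters precisely here: by parts (4) and (5) of Theorem~\ref{possible exceptional components}, the component $M_2(\mathbb{H}_5)$ can occur only when $G$ maps onto the non-solvable group $G_{240,90}$, whose order is divisible by $5$, and both forms of the hypothesis forbid this. Hence the exceptional component is one of $M_2(\Q)$, $M_2(\Q(i))$, $M_2(\Q(\sqrt{-2}))$, $M_2(\Q(\sqrt{-3}))$, $M_2(\mathbb{H}_2)$ or $M_2(\mathbb{H}_3)$.

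Next I would pass to a commensurable model. As $\Z G$ is an order in $\Q G\cong\prod_i M_{n_i}(D_i)$, Lemma~\ref{ordersunits} gives that $\U(\Z G)$ is commensurable with $\prod_i\GL_{n_i}(\mathcal{O}_i)$, where $\mathcal{O}_i$ is a maximal order in $D_i$. Because $G$ is cut, each centre $Z(D_i)$ equals $\Q$ or an imaginary quadratic field and therefore has finite unit group; in particular, on the exceptional factor the group $\GL_2(\mathcal{O})$ is commensurable with $\SL_2(\mathcal{O})$. The geometric input is that each such $\SL_2(\mathcal{O})$ is a non-uniform arithmetic lattice in a rank one real Lie group (namely $\SL_2(\R)$, $\SL_2(\C)$, or the reduced norm one group of $M_2$ over the real Hamilton quaternions) and is virtually a non-trivial amalgamated product: for $M_2(\Q)$ this is the classical $\SL_2(\Z)\cong C_4*_{C_2}C_6$, and for the three Bianchi cases and the two quaternion cases the explicit amalgam or graph-of-groups decompositions of these small-covolume groups are available in the literature. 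Fixing one exceptional factor together with a finite index subgroup of the form $A_1*_C A_2$ in it, and letting $B$ be a finite index subgroup of the product of all remaining factors, the distributivity identity $(A_1*_C A_2)\times B\cong(A_1\times B)*_{C\times B}(A_2\times B)$ exhibits a subgroup of finite index in $\prod_i\GL_{n_i}(\mathcal{O}_i)$, hence in $\U(\Z G)$, as a non-trivial amalgam. This establishes alternative (2), completing the dichotomy.

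The main obstacle is the geometric input in the last step. Unlike the $\SL_2(\Z)$ case, the lattices attached to $M_2(\Q(\sqrt{-d}))$ and to $M_2(\mathbb{H}_2),M_2(\mathbb{H}_3)$ act on hyperbolic $3$- and $5$-space rather than on a tree, so their splitting as amalgams is not formal and rests on explicit fundamental domains and cusp analysis for these particular Euclidean arithmetic groups; verifying this uniformly across all six surviving components is the technical heart of the argument. It is exactly the excluded quaternion case $M_2(\mathbb{H}_5)$, for which such a decomposition is not supplied, that necessitates the solvable-or-$5\nmid|G|$ hypothesis, since without it a cut group could in principle yield a unit group that neither has property T nor is known to be commensurable with a non-trivial amalgam, breaking the ``exactly one''. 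A secondary point worth recording is that the cut hypothesis is what guarantees the amalgam originates solely from the exceptional factor: it makes all central (and hence abelianisation) contributions finite, so that the non-exceptional factors, which have property T by the earlier characterisation, introduce no competing failure of FA.
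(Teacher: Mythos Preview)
The paper does not supply its own proof of this theorem; it is stated without proof and attributed to \cite{BJJKT1}. So there is nothing in the present paper to compare your argument against directly. That said, your outline is the expected one and matches the structure of the cited source: mutual exclusivity via $\mathrm{T}\Rightarrow\mathrm{HFA}$ together with the Bass--Serre tree of an amalgam; then, under the cut hypothesis, the failure of property~T forces (by the preceding characterisation) an exceptional component, necessarily of type~2 by Theorem~\ref{possible exceptional components}(6); the hypothesis ``solvable or $5\nmid|G|$'' eliminates $M_2(\mathbb{H}_5)$ exactly as you say, via parts (4) and (5) of that theorem; and the distributivity identity $(A_1*_C A_2)\times B\cong(A_1\times B)*_{C\times B}(A_2\times B)$ lifts a splitting on one factor to the whole commensurability class.

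Your own diagnosis of the residual content is accurate. The substantive work in \cite{BJJKT1} is precisely the case-by-case verification that the relevant arithmetic groups --- $\SL_2$ over $\Z$, over the rings of integers of $\Q(i)$, $\Q(\sqrt{-2})$, $\Q(\sqrt{-3})$, and over the maximal orders in $\mathbb{H}_2$ and $\mathbb{H}_3$ --- are each virtually a non-trivial amalgamated product. For $\Z$ and the Bianchi groups this is classical; for the two quaternionic cases it requires an explicit analysis of the action on hyperbolic $5$-space, which is carried out in the cited paper and is the reason the title of \cite{BJJKT1} refers to ``higher modular groups as amalgamated products''. You have correctly flagged this as the step you are quoting rather than proving, and correctly identified that the exclusion of $\mathbb{H}_5$ is what makes the list of cases finite and tractable.
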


In \cite{BJJKT1}, B\"achle, Janssens, Jespers, Kiefer, Temmerman also  proved  the following unit theorem.

\begin{theorem}
Let $G$ be a finite group having $D_8$ or $S_3$ as an epimorphic image. Then $\U(\Z G)$ is virtually a non-trivial amalgamated product. 
\end{theorem}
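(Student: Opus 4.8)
The plan is to exploit the surjection $f\colon G\twoheadrightarrow H$ with $H\cong D_8$ or $H\cong S_3\cong D_6$, and to reduce the statement to the already known structure of $\U(\Z H)$. Write $K=\ker f$ and consider the central idempotent $\widehat K\in\Q G$ together with the subring $\Z G\widehat K$ of $\Q G$. By the relative augmentation discussion of Section~2, the map $\alpha\mapsto\alpha\widehat K$ is exactly the augmentation $\text{aug}_{G,K}\colon\Z G\to\Z H=\Z(G/K)$, with kernel $\Ann_{\Z G}(\widetilde K)$, so $\Z G\widehat K\cong\Z H$ as rings. Set $\Lambda_0=\Z G\widehat K\oplus\Z G(1-\widehat K)$; this is a ring direct product and an order in $\Q G$ containing $\Z G$. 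Since $\Z G\subseteq\Lambda_0$ are two orders, Lemma~\ref{ordersunits} gives that $\U(\Z G)$ has finite index in $\U(\Lambda_0)=\U(\Z H)\times\U(\Z G(1-\widehat K))$. Projecting a finite-index subgroup of a direct product onto a factor yields a finite-index subgroup of that factor, and here the projection onto the first factor is precisely $\bar f$; hence $\bar f(\U(\Z G))$ is of finite index in $\U(\Z H)$.

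Next I would invoke the explicit computations recorded in Section~3. For $H=D_8$ one has $\U_1(\Z D_8)=B\rtimes D_8$ with $B$ free of rank $3$, so $\U(\Z D_8)$ is (nonabelian free)-by-finite. For $H=S_3\cong D_6$ one has $\Q S_3\cong 2\Q\oplus M_2(\Q)$, so $\U(\Z S_3)$ is commensurable with $\{\pm1\}^2\times\GL_2(\Z)$, and $\GL_2(\Z)$ is virtually nonabelian free (being commensurable with $C_2*C_3$). Thus in both cases $\U(\Z H)$ is virtually a nonabelian free group. Since $\bar f(\U(\Z G))$ has finite index in $\U(\Z H)$, it is itself virtually nonabelian free; choosing a free subgroup $F\leq\bar f(\U(\Z G))$ of finite index and of rank $\geq 2$, and setting $\Gamma_0=\bar f^{-1}(F)$, I obtain a finite-index subgroup $\Gamma_0\leq\U(\Z G)$ together with a surjection $\bar f|_{\Gamma_0}\colon\Gamma_0\twoheadrightarrow F$ onto a nonabelian free group.

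Finally I would pass from this surjection to an amalgam via Bass--Serre theory. Writing $F=A*B$ as a nontrivial free product with $A,B\neq 1$, the group $F$ acts on its Bass--Serre tree $T$ with a single edge orbit, trivial edge stabiliser, and vertex stabilisers $A$ and $B$. Pulling this action back through $\bar f|_{\Gamma_0}$ makes $\Gamma_0$ act on $T$ without inversion, again with quotient a single edge, but now with edge stabiliser $\ker(\bar f|_{\Gamma_0})$ and vertex stabilisers $\bar f^{-1}(A)$ and $\bar f^{-1}(B)$. By the structure theorem for groups acting on trees \cite{Serre}, $\Gamma_0\cong\bar f^{-1}(A)*_{\ker(\bar f|_{\Gamma_0})}\bar f^{-1}(B)$, and this amalgam is nontrivial since $A\neq 1\neq B$ force both vertex groups to strictly contain the edge group. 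As $\Gamma_0$ has finite index in $\U(\Z G)$, the latter is virtually a nontrivial amalgamated product.

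I expect the crux to be the first paragraph, namely the clean identification of $\bar f(\U(\Z G))$ as a \emph{finite-index} subgroup of $\U(\Z H)$; everything hinges on having a genuine surjection (after passing to finite index) of a subgroup of $\U(\Z G)$ onto a nonabelian free group, rather than merely a map with large image. Once the order-theoretic Lemma~\ref{ordersunits} supplies this, the remaining steps are bookkeeping, resting on the already established virtual freeness of $\U(\Z D_8)$ and $\U(\Z S_3)$ and on the standard Bass--Serre dictionary.
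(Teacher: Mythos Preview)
The paper is a survey and does not include a proof of this theorem; it merely records the result with a reference to \cite{BJJKT1}. So there is no in-paper argument to compare against, and your proposal must be assessed on its own.

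Your argument is correct. The three steps are all sound:
\begin{enumerate}
\item The order $\Lambda_0=\Z G\widehat K\oplus\Z G(1-\widehat K)$ indeed contains $\Z G$, and Lemma~\ref{ordersunits} gives $[\U(\Lambda_0):\U(\Z G)]<\infty$. Your claim that projecting a finite-index subgroup of a direct product onto a factor yields a finite-index subgroup is true and easy: if $b_1,\dots,b_m\in\U(\Z H)$ represent distinct cosets of $\bar f(\U(\Z G))$, then $(b_1,1),\dots,(b_m,1)$ represent distinct cosets of $\U(\Z G)$ in $\U(\Lambda_0)$. This is exactly the mechanism used (for a different purpose) in the proof of the Jespers--Parmenter theorem in Section~6, where one shows $V_l\subseteq f(\U(\Z G))$ via the same order $\Z G\widehat K\oplus\Z G(1-\widehat K)$.
\item That $\U(\Z D_8)$ and $\U(\Z D_6)$ are virtually nonabelian free is recorded in the paper (Section~3 and the first structure theorem of Section~8), so the intersection of $\bar f(\U(\Z G))$ with a finite-index free subgroup of $\U(\Z H)$ is free of rank $\ge 2$ by Nielsen--Schreier.
\item The Bass--Serre step is the standard fact that a surjection $\Gamma_0\twoheadrightarrow A*B$ with $A,B\neq 1$ pulls back to a decomposition $\Gamma_0\cong\phi^{-1}(A)*_{\ker\phi}\phi^{-1}(B)$, which is nontrivial because both vertex groups properly contain the edge group.
\end{enumerate}
The only cosmetic point is that $\alpha\mapsto\alpha\widehat K$ is not literally the augmentation map $\text{aug}_{G,K}$ but rather its composite with the isomorphism $\Z(G/K)\cong\Z G\widehat K$; this does not affect the argument. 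Your identification of the first paragraph as the crux is accurate: once $\bar f(\U(\Z G))$ is known to have finite index in $\U(\Z H)$, the rest follows from results already available in the survey.
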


\end{document}